\documentclass[a4paper,11pt,superscriptaddress]{article}
\usepackage[top=30mm,left=30mm,bottom=30mm,right=30mm]{geometry}
\usepackage{amsmath}
\usepackage{amssymb}
\usepackage{amsthm}
\usepackage{graphicx}
\usepackage{hyperref}
\usepackage{amsfonts}
\usepackage{bbm}
\usepackage{comment}
\usepackage{color}
\usepackage{tikz-cd}

\usepackage{hyperref}
\definecolor{forestgreen}{RGB}{0, 153, 0}
\hypersetup{
    colorlinks,
    citecolor=forestgreen,
    filecolor=black,
    linkcolor=red,
    urlcolor=blue
}
\hypersetup{linktocpage}

%% Shortcut commands
\newcommand{\mc}[1]{\mathcal{#1}}
\newcommand{\mbb}[1]{\mathbb{#1}}
\newcommand{\mf}[1]{\mathfrak{#1}}
\newcommand{\norm}[1]{\left\lVert#1\right\rVert}
\DeclareMathOperator{\sgn}{\text{sgn} }

% Theorem Environment
\newtheorem{theorem}{Theorem}[section]
\newtheorem{corollary}{Corollary}[section]
\newtheorem{lemma}{Lemma}[section]
\newtheorem{proposition}{Proposition}[section]
\theoremstyle{definition}
\newtheorem{definition}{Definition}[section]
\newtheorem{example}{Example}[section]

\newcommand\blfootnote[1]{%
  \begingroup
  \renewcommand\thefootnote{}\footnote{#1}%
  \addtocounter{footnote}{-1}%
  \endgroup
}

\begin{document}
\title{Singular Contact Geometry and Beltrami Fields in Cholesteric Liquid Crystals}
\author{Joseph Pollard\footnote{Mathematics Institute, Zeeman Building, University of Warwick, Coventry, CV4 7AL, United Kingdom.} \and Gareth P. Alexander\footnote{Department of Physics and Centre for Complexity Science, University of Warwick, Coventry, CV4 7AL, United Kingdom.}}

%\author{Joseph Pollard \and Gareth P. Alexander}

\maketitle

\begin{abstract}
%We introduce singular contact structures as a generalisation of regular contact structures where the plane field may have singularities at isolated points.
The description of point defects in chiral liquid crystals via topological methods requires the introduction of singular contact structures, a generalisation of regular contact structures where the plane field may have singularities at isolated points. We characterise the class of singularities that may arise in such structures, as well as the subclass of singularities that can occur in a Beltrami field. We discuss questions of global existence, and prove that all singular contact structures with nonremovable singularities are overtwisted. To connect the theory to experiment we also discuss normal and tangential boundary conditions for singular contact structures, and show we can realise all desired boundary conditions except for normal anchoring on a sphere, where a theorem of Eliashberg and Thurston provides an obstruction to having a singular contact structure in the interior. By introducing a singular version of the Lutz twist we show that all contact structures are homotopic within the larger class of singular contact structures. We give applications of our results to the description of topological defects in chiral liquid crystals. 
\end{abstract}

\blfootnote{This work is supported by the UK ESPRC through Grant No. EP/L015374/1.}

\section{Introduction}
Contact topology has long been understood as an important tool in mathematical physics. As the odd-dimensional analogue of symplectic geometry, it is intimately related to the Hamiltonian formalism of classical mechanics~\cite{arnold89}. It is the natural setting for thermodynamics, as well as various problems in the theory of waves and geometric optics~\cite{arnold90}. Combined with methods from geometry and analysis, it is also central to modern developments in topological fluid dynamics~\cite{arnold98}. The fundamental solutions of the incompressible Euler equations are Beltrami fields, vector fields parallel to their curl, and a theorem of Etnyre \& Ghrist~\cite{etnyre00} shows that Beltrami fields can be identified with 1-forms defining contact structures, and vice-versa. Beltrami fields have been extensively studied in their own right, stemming from their significance as force-free magnetic fields~\cite{chandrasekhar57,chandrasekhar58,woltjer58}, their potential to display Lagrangian turbulence~\cite{ginzburg94}, and their relationship with solutions of the Navier--Stokes equations~\cite{constantin88}.

The study of Beltrami fields via contact topology has largely avoided a discussion of singularities -- stagnation points in the flow -- and the corresponding notion of a `singular contact structure' has only been studied recently, and in a different form than we introduce here~\cite{cardona19,miranda18}. %Nonetheless, singularities may be important for the study of chaos in Euler flows~\cite{dombre86}, and do occur in all sorts of physical systems, frequently with central significance to their properties and observed phenomena.
% THESE HAS REMOVED THE CITATION TO DOMBRE86 !!
Nonetheless, singularities do occur in all sorts of physical systems, frequently with central significance to their properties and observed phenomena, so that for physical applications there is a need to include singularities into contact topology. In this paper we define, and investigate, singular contact structures by analogy with the established concept of a singular foliation and motivated by providing a mathematical framework for the description of certain defects in cholesteric liquid crystals and other chiral phases in condensed matter physics, such as skyrmion states in chiral ferromagnets~\cite{milde13}. The connection between contact structures and foliations is now classical, and is explored in detail in the work of Eliashberg \& Thurston on confoliations~\cite{eliashberg97}; our approach is close in spirit to theirs. We restrict ourselves to considering only point singularities of orientable contact structures on 3-manifolds and defer extension to other kinds of singularities to further work.

While the constructions we develop are applicable to the general study of Beltrami fields and contact geometry, we are especially motivated by applications to liquid crystals and the description of their defects. Liquid crystals are optically anisotropic materials that can be described mathematically by a unit line field $N$, called the director, giving at each point in space the local optical axis. In the most common physical theory, the configuration of the director field is given by minimisation of the Oseen--Frank free energy~\cite{ball2017,deGennesProst},
\begin{equation}
  F = \frac{1}{2}\int_M \Bigl( K_1 \bigl( \nabla \cdot N \bigr)^2 + K_2 \bigl( N \cdot \text{curl }N - q_0 \bigr)^2 + K_3 \norm{\nabla_N N}^2 \Bigr) \mu,
  \label{eq:Frank}
\end{equation}
where $M$ is the material domain, a subset of Euclidean space, and $\mu$ is the volume form. The Oseen--Frank free energy consists of three basic types of elastic distortion of the director: splay, $\nabla \cdot N$; twist, $N \cdot \text{curl }N$; and bend, $\nabla_N N$. The $K_i$ are material-dependent elastic constants and $q_0$ is a material-dependent parameter expressing an energetic preference for the director to undergo twist distortion, as well as imparting a sense of handedness, or chirality -- if $q_0 > 0$ the material is left-handed, if $q_0 < 0$ the material is right-handed. A material where $q_0 \neq 0$ is referred to as a chiral, or cholesteric, liquid crystal.

Defects in liquid crystals are points (or lines) where the optical axis is undefined and the director field is singular. For example, the radial configuration $N = x/\norm{x}$ contains a point defect at the origin. Such defects in ordered phases of matter have been studied extensively in both physical and mathematical theories, with methods of topology applied to give a classification of the different defects that can occur. The methods come primarily from homotopy theory~\cite{alexander12,janich87,mermin79,monastyrsky86}, although often the physics requires a finer treatment that also captures some relevant geometric or energetic aspect. This happens, in particular, in phases where the ground state has non-zero spatial gradients and is not simply homogeneous~\cite{mermin79}. An example arises in smectic liquid crystals, where the density is spatially inhomogeneous and results from the theory of measured foliations~\cite{chen09,machon19,poenaru81} have been needed to properly account for the experimentally observed defects. Similarly, in cholesterics the classical homotopy classification of defects does not fully describe the physics of these materials, and geometric methods are required to properly describe the observed defects.

The results we develop here are inspired by recent experiments~\cite{posnjak17} on cholesteric liquid crystals ($q_0 \ne 0$) confined to spherical droplets, and the need to classifiy the chiral point defects that are observed. They are based in the methods of contact geometry. To convey informally the relevance of contact geometry for cholesterics we can observe that, in the absence of defects and boundary conditions, the minimiser (ground state) of~\eqref{eq:Frank} is a unit length Beltrami field, namely
\begin{equation}
  N = \sin(q_0 z) \,\partial_x + \cos(q_0z) \,\partial_y,
\end{equation}
or any vector equivalent to it. More generally, any director $N$ satisfying $N \cdot \textrm{curl } N \approx q_0$ (or even the weaker condition $N \cdot \textrm{curl } N \neq 0$) has a small twist energy and is therefore favoured in the Oseen--Frank free energy Eq. (\ref{eq:Frank}); such a director has a dual 1-form defining a contact structure. Initial steps towards adopting contact topology and geometry in the study of cholesteric liquid crystals have been made recently: in the foundational study~\cite{machon17}, it is explained how the Gray stability theorem for contact structures strongly restricts the dynamics of cholesteric materials and leads to the formation of experimentally-observed structures; in~\cite{pollard19}, the influence of contact topology on the formation of structures in spherical droplets of cholesteric liquid crystal is discussed and it is argued physically that the contact condition underlies the exotic structures observed in experiments~\cite{posnjak17}.

However, despite these initial steps, the application of contact geometry to cholesterics meets with two difficulties. First, if we are to describe the defects that occur in liquid crystals then we need to broaden the definition of contact structures to allow for singularities. The connection between contact structures and Beltrami fields allows an intuitive picture of these singularities to be sketched informally~\cite{pollard19}. If $N$ is a Beltrami field then evaluating the Beltrami equation at any zero gives $\bigl. \textrm{curl } N \bigr|_0 = 0$. Consequently the first term in a Taylor series about the zero must be the gradient of a function, $N = \nabla \phi + \cdots$, and since Beltrami fields are divergence-free the function is harmonic. The higher order terms in the Taylor series are needed to capture the chirality of the singularity and are provided by a solution of $\textrm{curl } V = q_0 \nabla \phi$. It is not sufficient to consider only generic singularities, as a variety of defects with non-generic local structures have been discovered in experiments~\cite{posnjak17}.

Second, most experiments have the liquid crystal confined in a domain with an imposed boundary condition -- a preferred anchoring for the liquid crystal molecules -- and there is a fundamental obstruction to the existence of contact structures compatible with certain boundary conditions. This is the Reeb stability theorem for confoliations of Eliashberg \& Thurston~\cite{eliashberg97}, which we will discuss in \S\ref{sec:global_properties}. The physical consequence of this is the necessary existence of regions of the material with the wrong handedness~\cite{pollard19,ackerman2016} -- i.e., the sign of the twist $N \cdot \textrm{curl } N$ -- in order to satisfy both the boundary conditions and topology.

%In this paper, we will study `singular Beltrami fields', vector fields $N$ with isolated point singularities that satisfy $N \cdot \text{curl } N \neq 0$ away from their singularities. Normalising such vector fields on the complement of their singular points yields the director of a cholesteric liquid crystal with point singularities. It is equivalent, as we shall show, to study the dual notion of a `singular contact form', and the `singular contact structures' they define.

This paper proceeds as follows. In \S\ref{sec:singularity_theory} we give a brief overview of the tools from singularity theory that we will need. In \S\ref{sec:singular_contact_structures} we introduce singular contact structures and discuss an extension of the Etnyre--Ghrist theorem to singular contact structures and singular Beltrami fields. The result of Etnyre \& Ghrist does not always extend over the singular points, leading to further distinction between the types of singularity where it does extend, those that may occur in a Beltrami fields, and the types where it does not extend, those that may occur in a contact structure (or equivalently, a vector field satisfying $N \cdot \textrm{curl } N \neq 0$ away from the singularities) but not a Beltrami field. In \S\ref{sec:local_properties} we describe the local structure around the singular points, and explain the difference between singularities that are \textit{chiral}, occuring in singular contact structures, and those that are \textit{Beltrami}, occuring in singular Beltrami fields; we give examples of singularities that belong to the former class but not the latter. Our method is constructive and we give local models that correspond to observed singularities in cholesteric liquid crystals. We also give a refined construction for Beltrami fields.

In \S\ref{sec:global_properties} we turn our attention to global properties. We show that singular contact structures can be constructed on any 3-manifold, which follows from the Lutz--Martinet theorem, and prove that they are overtwisted whenever they have any singularities with nonzero index. For manifolds with boundary we consider the two situations where either the contact planes or the Reeb field are tangent to the boundary. For fixed boundary conditions we give necessary and sufficient conditions to extend a singular contact structure over the interior. We discuss a surgery on contact manifolds called the Lutz twist and give an explicit realisation for it as a homotopy through singular contact structures. This establishes that all contact structures are homotopic in the larger class of singular contact structures. Physically this result shows that there is no topological obstruction to chiral liquid crystals attaining their ground state, only an energetic barrier. We conclude with some remarks about a singular version of the Weinstein conjecture for contact structures.

\section{Singularity Theory} \label{sec:singularity_theory}
In this section we review some notions from singularity theory that we will make use of~\cite{arnold88}. A \textit{germ} of a map at a point $p$ in a manifold $M$ is an equivalence class of maps, where two maps $f, h$ are considered equivalent if there exists an open neighbourhood $U$ of $p$ such that $f|_U = h|_U$. This notion extends to germs of functions, vector fields, differentials forms, etc. By an abuse of notation, we will confuse the equivalence class with one of its representatives.

We consider germs of maps from $\mbb{R}^n$ to $\mbb{R}^m$ that have an isolated zero at the origin. Two germs of maps $X, Y$ are considered equivalent, which we denote $X \sim Y$, if there exist germs of diffeomorphisms $f, h$ that map the origin to itself and are such that $X = f \circ Y \circ h^{-1}$. A \textit{singularity} is an equivalence class of germs of maps $X : \mbb{R}^n \to \mbb{R}^m$ with isolated zero at the origin. If $X$ is a vector field on an $n$-manifold $M$ with an isolated zero at a point $p$, we may take a small open neighbourhood $U$ of $p$ and identify $X|_U$ with some singularity. From now on, we only consider the case when $n=3$.

Denote by $j^k X$ the $k$-jet of a map germ, the Taylor series of $X$ truncated at order $k$. A map germ is \textit{finitely determined} if it is equivalent to its $k$-jet for some finite $k$. This turns out to imply that the map germ is real analytic~\cite{arnold88} and it is therefore sufficient to restrict our attention to analytic maps, whose components (in some coordinate system) are polynomials.

The components of a germ of a polynomial vector field (or 1-form) $X$ generate an ideal $I_X$ in the ring $\mbb{R}[x,y,z]$ of polynomials in three variables. The maximum ideal $\mf{m}$ consists of those polynomials that vanish at 0, i.e. those with no constant part. We will say another germ of a vector field $Y$ belongs to the ideal generated by $X$, and write $Y \in I_X$, if each component of $Y$ belongs to the ideal.
\begin{proposition}
  If $X$ is a polynomial vector field and $Y \in \mf{m}I_X$, then $X \sim X+Y$.
\end{proposition}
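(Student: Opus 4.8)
The plan is to use the homotopy method of Thom and Levine (Moser's trick), with the hypothesis entering through one clean algebraic observation. First I would reformulate the assumption: to say that each component of $Y$ lies in $\mf{m}I_X$ is to say that $Y = A\,X$ for a $3\times 3$ matrix $A$ whose entries are germs of functions vanishing at the origin, so that $A(0)=0$. Consequently $X + Y = (I + A)X$, and since $(I+A)(0) = I$ is invertible, the germ $I+A$ is invertible on a neighbourhood of the origin. This already exhibits $X$ and $X+Y$ as differing by a (base-point dependent) invertible linear change in the target, so in particular they have the same zero locus and generate the same ideal; the whole difficulty is to promote this to genuine source and target diffeomorphisms.

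Next I would interpolate. Set $F_t = (I + tA)X = X + tY$ for $t\in[0,1]$, so that $F_0 = X$ and $F_1 = X+Y$; because $I + tA$ is invertible for every $t\in[0,1]$, the components of $F_t$ generate the ideal $I_X$ for all $t$, i.e.\ $I_{F_t} = I_X$. I would then seek smooth one-parameter families of origin-fixing diffeomorphism germs $h_t$ (source) and $g_t$ (target) with $h_0 = g_0 = \mathrm{id}$ satisfying $g_t^{-1}\circ F_t \circ h_t = X$ for all $t$; evaluating at $t=1$ gives $X = g_1^{-1}\circ(X+Y)\circ h_1$, which is exactly $X\sim X+Y$. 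Writing $\xi_t$ and $\eta_t$ for the time-dependent vector fields generating $h_t$ and $g_t$ and differentiating the relation in $t$, the construction of these families reduces to solving, for each $t$, the homological equation $Y = -DF_t\,\xi_t + \eta_t\circ F_t$, with $\xi_t(0)=\eta_t(0)=0$ and smooth dependence on $t$, followed by integrating $\xi_t,\eta_t$ to their flows.

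The main obstacle is solving this homological equation with bona fide vector fields. The ``contact'' part of the equation is free: since $Y\in\mf{m}I_X = \mf{m}I_{F_t}$, the term $\eta_t\circ F_t$ can absorb $Y$ if $\eta_t$ is merely allowed to be a vector field along $F_t$ (that is, to depend on the base point through $F_t$), which is precisely the statement that $F_t$ and $X$ are contact equivalent. The real work is to trade this base-point dependence for an honest target vector field $\eta_t$ depending only on the image point, paying for the difference with the source term $DF_t\,\xi_t$ — this is the passage from contact to left--right equivalence. I would carry this out with the finite-determinacy machinery of Arnold: an isolated zero makes the local algebra $\mbb{R}[x,y,z]/I_X$ finite dimensional, and a Nakayama-type argument (via the preparation theorem) then solves the homological equation with the required vanishing at the origin, uniformly in $t\in[0,1]$. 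It is here that the hypothesis $\mf{m}I_X$, rather than merely $I_X$, is essential: the extra factor of $\mf{m}$ supplies the higher-order smallness needed both for the Nakayama step to close and for the flows of $\xi_t,\eta_t$ to remain defined on a fixed neighbourhood of the origin throughout $t\in[0,1]$.
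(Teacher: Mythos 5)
Your first two paragraphs are correct and are the right way to start: writing $Y = AX$ with $A(0)=0$, interpolating via $F_t = (I+tA)X$, and reducing left--right equivalence to the homological equation $Y = -DF_t\,\xi_t + \eta_t\circ F_t$ is the standard Thom--Levine scheme. (For comparison, the paper offers no argument of its own for this proposition; it simply cites~\cite{arnold88}.) The gap is in your final paragraph, and it is not a technicality. Nakayama-type arguments via the preparation theorem apply to finitely generated modules over the local ring of function germs on the \emph{source}. The term $DF_t\,\xi_t$ does range over such a module, but the term $\eta_t\circ F_t$ ranges only over a module over germs pulled back from the \emph{target} through $F_t$; the sum of the two is therefore not a module over source germs at all, and Nakayama cannot be applied to it. This mixed-module structure is precisely the classical obstruction separating left--right equivalence from contact equivalence in Mather's theory. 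The hypothesis $Y\in\mf{m}I_X$ is tailored to the latter --- your identity $X+Y=(I+A)X$ already \emph{is} a contact equivalence --- and no extra factor of $\mf{m}$ feeds it into the former.

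Moreover, the step cannot be repaired, because with the paper's literal definition of $\sim$ (left--right equivalence of map germs) the implication is false, so no argument will solve your homological equation. Take $X = (x,\,y^3,\,z)$, which is polynomial, has an isolated zero at the origin, and is even a gradient, $X = \nabla\bigl(\tfrac12 x^2+\tfrac14 y^4+\tfrac12 z^2\bigr)$, and take $Y = (0,\,xy,\,0)$; since $x\in I_X$ and $y\in\mf{m}$, we have $Y\in\mf{m}I_X$. But $X+Y = (x,\,y^3+xy,\,z)$ is not left--right equivalent to $X$: any relation $X = f\circ(X+Y)\circ h^{-1}$ forces $h$ to carry the critical set of $X+Y$ onto that of $X$, and hence $f$ to carry the discriminant (the image of the critical set) of $X+Y$ onto that of $X$. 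The discriminant of $X$ is the smooth plane $\{v=0\}$, whereas the discriminant of $X+Y$ is the cuspidal edge $\{4u^3+27v^2=0\}$, which is not a $C^1$ submanifold; no diffeomorphism germ can exchange them. So for this pair the homological equation is genuinely unsolvable for every $t$. The statement that is true --- and that the paper actually needs, since everything it extracts downstream (the zero locus, the local algebra, the index up to sign) is a contact invariant --- is the contact-equivalence version: germs are equivalent if they differ by a source diffeomorphism and multiplication by an invertible matrix germ. For that reading, your opening observation that $X+Y=(I+A)X$ with $I+A$ invertible near the origin already constitutes a complete proof, and the homotopy machinery of your last two paragraphs is unnecessary.
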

\noindent See~\cite{arnold88} for a proof. Informally, vector fields belonging to the ideal $\mf{m}I_X$ have monomial terms that are at least `one order higher' than the highest order terms in $X$, and adding them to $X$ does not change the local structure of the singularity.

The \textit{local algebra} of the singularity, is the algebra $Q_X = \mbb{R}[x,y,z]/ I_X$. It is finitely generated when the germ $X$ is finitely determined, and thus has a finite basis consisting of monomials. A generic perturbation of $X$ by adding terms from the local algebra will break the germ apart into germs of singularities of lower complexity; this process is called an \textit{unfolding}.

In \S\ref{sec:local_properties} we establish (Proposition \ref{prop:only_gradients}) that we need only study singularities of the form $\nabla \phi$, for $\phi : \mbb{R}^3 \to \mbb{R}$ a polynomial function. Arnold has given a full classification of singularities arising as gradient fields of functions; an extensive tabulation appears in~\cite{arnold88}. Two gradient vector fields $\nabla \phi, \nabla \psi$ are equivalent if there exist germs of diffeomorphisms $h : \mbb{R}^3 \to \mbb{R}^3$ and $f : \mbb{R} \to \mbb{R}$ such that $\psi = f \circ \phi \circ h^{-1}$.

\begin{example}
  A germ of a singularity $\phi : \mbb{R}^3 \to \mbb{R}$ whose matrix of second derivatives (Hessian) is nondegenerate at the singular point is called \textit{Morse}. By the Morse lemma, these singularities are equivalent to a function germ of the form $\phi = \pm x^2 \pm y^2 \pm z^2$. The number of minus signs is the \textit{Morse index}. A generic perturbation of a non-Morse singularity will break it apart into Morse singularities.
\end{example}

For any vector field with a singularity at the point $p$, we define an index as follows. Choose a small embedded sphere $S$ around $p$. The vector field $X$ determines a map from $S$ to the unit $n$-sphere, which induces a homomorphism from the group $H_2(S^2, \mbb{Z}) \cong \mbb{Z}$ to itself. Consequently the action of this map is of the form $x \mapsto \lambda x$ for some integer $\lambda$ called the \textit{degree} of the map. We define the \textit{index} of $X$ at $p$ to be equal to $\lambda$.
\begin{example}
  If $\phi$ is a Morse singularity, the index is $+1$ if the Morse index is even and $-1$ if it is odd.
\end{example}
\noindent The index couples to the topology of $M$ via the well-known Poincar\'e--Hopf theorem. On a closed manifold $M$, this states that the sum of the indices of any vector field with isolated singularities is equal to the Euler characteristic of $M$; this also holds on compact manifolds with boundary as long as the vector field is transverse to the boundary.

\section{Singular Contact Structures} \label{sec:singular_contact_structures}
A plane field $\xi$ on an oriented 3-manifold $M$ is a rank 2 subbundle of the tangent bundle. If $M$ and $\xi$ are both oriented, then $\xi$ is equal to the kernel of a non-vanishing 1-form $\eta$, and the line bundle $TM / \xi$ is directed by a nowhere-vanishing vector field. Conversely, given a nowhere-vanishing vector field $N$ on a Riemannian manifold $(M,g)$, we have an oriented plane field $\xi$ consisting of the planes orthogonal to $N$, which is defined by the 1-form $\eta = \iota_N g$. We will only consider orientable plane fields in this article.

The plane field is integrable when $\eta \wedge d\eta = 0$, and is then the tangent space of a collection of immersed submanifolds. Collectively these submanifolds form a \textit{foliation}, and the individual submanifolds are \textit{leaves} of the foliation. If $\eta \wedge d\eta \neq 0$ then the plane field is a \textit{contact structure}, and $\eta$ is called a \textit{contact form}. A contact form is \textit{positive} (left-handed) if $\eta \wedge d\eta > 0$ relative to the orientation on $M$, and \textit{negative} (right-handed) if $\eta \wedge d\eta < 0$. Interpolating between foliations and contact structure are the \textit{confoliations}, plane fields defined by 1-forms with $\eta \wedge d\eta \ge 0$ or $\eta \wedge d\eta \le 0$, depending on the handedness. Confoliations were introduced by Eliashberg \& Thurston in~\cite{eliashberg97} to study the perturbation of foliations into contact structures.

The \textit{Reeb field} of a contact form $\eta$ is the vector field $R$ defined by $\iota_R \eta =1$ and $\iota_R d\eta = 0$. The connection between Reeb fields and Beltrami fields is given by the following theorem of Etnyre \& Ghrist.
\begin{theorem} (\cite[Theorem 2.1]{etnyre00})\label{thm:etnyre-ghrist}
  Let $(M, g)$ be a Riemannian 3-manifold. Any smooth, nonsingular, Beltrami field is the Reeb field of some contact form $\eta$ on $M$, and conversely the Reeb vector field of a contact form on $M$ is a smooth, nonsingular, Beltrami field with respect to some metric on $M$.
\end{theorem}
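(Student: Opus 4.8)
The plan is to prove the two implications separately, in each case passing between the Riemannian data (which define the musical isomorphism $V \mapsto \iota_V g$ and the curl via $\iota_{\operatorname{curl} V}\mu = d(\iota_V g)$, with $\mu$ the volume form) and the contact data (which define the Reeb field). The only algebraic identities I will need are the graded Leibniz rule for interior products and the observation that, on a $3$-manifold, $\eta\wedge\mu=0$ for any $1$-form $\eta$, since the product is a $4$-form.

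First I would treat the forward implication. Starting from a nonsingular Beltrami field $V$, so that $\operatorname{curl} V=fV$ for some function $f$, I set $\eta=\iota_V g$. Then $d\eta=\iota_{\operatorname{curl} V}\mu=f\,\iota_V\mu$, and contracting $\eta\wedge\mu=0$ with $V$ yields the identity $\eta\wedge\iota_V\mu=(\iota_V\eta)\,\mu=\norm{V}^2\mu$. Combining these gives $\eta\wedge d\eta=f\norm{V}^2\mu$, which is nonvanishing exactly when $f\neq0$; thus for a rotational Beltrami field $\eta$ is a contact form. A single further contraction gives $\iota_V d\eta=f\,\iota_V\iota_V\mu=0$, so $V$ spans the line field $\ker d\eta$. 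Since $\eta(V)=\norm{V}^2$, the Reeb field of $\eta$ is $R=V/\norm{V}^2$, and $V$ agrees with $R$ precisely when $\norm{V}$ is constant (in particular for the unit director fields of interest here), and otherwise is a positive reparametrisation of it.

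For the converse I would build a metric adapted to a given contact form $\eta$ with Reeb field $R$. Because $\eta$ is contact, $d\eta$ restricts to a symplectic form on the plane field $\xi=\ker\eta$, so I may choose an almost-complex structure $J$ on $\xi$ with $d\eta(\,\cdot\,,J\,\cdot\,)$ positive definite. Define $g$ by declaring $R$ to be a unit vector orthogonal to $\xi$ and setting $g|_\xi=d\eta(\,\cdot\,,J\,\cdot\,)$; this is a genuine Riemannian metric, being positive definite on $\xi$ by the choice of $J$ and on the $R$-direction by fiat. The crux is that $\iota_R g=\eta$, which holds because both sides equal $1$ on $R$ and vanish on $\xi$. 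Taking $\mu$ to be the volume form of $g$, the curl of $R$ obeys $\iota_{\operatorname{curl} R}\mu=d(\iota_R g)=d\eta$, and then the Reeb condition $\iota_R d\eta=0$ becomes $\mu(\operatorname{curl} R,R,\,\cdot\,)=0$, forcing $\operatorname{curl} R$ to be everywhere proportional to $R$. Hence $R$ is a (rotational, since $d\eta\neq0$) Beltrami field for $g$, and it is nonsingular because Reeb fields never vanish.

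I expect the substance of the argument to sit in two places. In the forward direction the real subtlety is the normalisation: the naive dual $\iota_V g$ recovers $V$ only up to the factor $\norm{V}^2$, so the honest correspondence is with reparametrised Reeb fields, and the rotational hypothesis $f\neq0$ is exactly what rescues the contact condition. In the converse the work is in the choice of the compatible $J$ and in verifying the identity $\iota_R g=\eta$, since the entire identification of $d(\iota_R g)$ with $d\eta$ — and hence the Beltrami property — rests on it.
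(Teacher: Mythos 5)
Your proof is correct and is essentially the paper's own argument: the forward direction is the same dual-one-form computation ($\eta=\iota_V g$, giving $\eta\wedge d\eta=f\norm{V}^2\mu$ and $\iota_V d\eta=0$) that the paper carries out for its singular generalisation, and your converse metric --- $R$ unit and orthogonal to $\xi$, with $g|_\xi=d\eta(\cdot,J\cdot)$ --- is exactly the compatible metric $g(X,Y)=d\eta(X,JY)+\eta(X)\eta(Y)$ that the paper cites from Blair. Your normalisation caveats (a Beltrami field of non-constant length is only a positive reparametrisation of the Reeb field of $\iota_V g$, and the rotational hypothesis $f\neq 0$ is what makes $\eta$ contact) are correct and in fact match the precise ``Reeb-like'' formulation in Etnyre--Ghrist, which the paper's statement slightly simplifies.
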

\noindent This theorem follows from the existence of special metrics associated to the contact form $\eta$. A Riemannian metric $g$ is called \textit{compatible} with $\eta$ if $R$ is the unit normal to the planes of $\xi = \text{ker } \eta$ in this metric. Such metrics are locally parameterised by maps $J : TM \to TM$ satisfying $J^2 = -I + \eta \otimes R$, which behave as almost complex structures on the planes of $\xi$. Given such a map, the associated compatible metric $g$ is defined by
\begin{equation}
  g(X, Y) = d\eta(X, JY) + \eta(X) \eta(Y),
\end{equation}
for any vector fields $X, Y$~\cite{blair10}. Alternatively, the existence of compatible metrics can be deduced from a slightly more general result, a proof of which can be found in~\cite{farber03}. We denote the star operator of a Riemannian metric $g$ by $\star_g$.
\begin{lemma} \label{lemma:metric_construction}
  For any 1-form $\alpha$ and 2-form $\beta$ such that $\alpha \wedge \beta > 0$, it is possible to find a Riemannian metric $g$ on $M$ for which $\star_g \alpha = \beta$.
\end{lemma}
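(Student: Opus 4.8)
The plan is to reduce the statement to pointwise linear algebra and then organise the pointwise choices into a single smooth metric. The key observation is that in dimension three every $2$-form is decomposable (since $\beta \wedge \beta$ is a $4$-form and hence vanishes), so $\beta$ has a well-defined two-dimensional \emph{support} $W_p = \{\, v \in T_p^*M : v \wedge \beta = 0 \,\}$ at each point. Because $\beta$ is nowhere zero (as $\alpha \wedge \beta > 0$), the bundle map $v \mapsto v \wedge \beta$ from $T^*M$ to $\Lambda^3 T^*M$ has constant rank one, so $W := \ker$ is a smooth rank-two subbundle of $T^*M$. Moreover $\alpha \wedge \beta \neq 0$ forces $\alpha \notin W$ everywhere, which gives a smooth splitting $T^*M = \langle \alpha \rangle \oplus W$. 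The idea is then to build the metric on $T^*M$ (equivalently, its dual on $TM$) by declaring this splitting orthogonal, assigning $\alpha$ unit length, and choosing the inner product on $W$ so that its area form is exactly $\beta|_W$.

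First I would verify the pointwise computation justifying the recipe. If $e^1 = \alpha, e^2, e^3$ is a positively oriented orthonormal coframe with $e^2, e^3 \in W$, then $\star_g \alpha = \star_g e^1 = e^2 \wedge e^3$, and this equals $\beta$ precisely when $e^2 \wedge e^3$ is the area form of the chosen inner product on $W$ (the identity $e^2\wedge e^3 = \beta$ holding in $\Lambda^2 T^*M$ since both lie in the line $\Lambda^2 W$). Consistency is automatic: with $|\alpha|_g = 1$ one gets $\mathrm{vol}_g = e^1 \wedge e^2 \wedge e^3 = \alpha \wedge \beta$, matching the general identity $\alpha \wedge \star_g \alpha = |\alpha|_g^2 \, \mathrm{vol}_g$. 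Thus the only genuine content is producing, smoothly, a fibre metric on $W$ whose area form is $\beta|_W$.

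For this I would note that $\beta$ is a nowhere-vanishing section of the line bundle $\Lambda^2 W$, so $\Lambda^2 W$ is trivial and $W$ is orientable; orient $W$ using $\beta$. Then choose any smooth fibre metric $k_0$ on $W$ by a partition of unity, let $\sigma_0 > 0$ be its area form for this orientation, write $\beta|_W = \phi \, \sigma_0$ for a smooth positive function $\phi$, and set $k := \phi \, k_0$. Since rescaling a two-dimensional inner product by $\phi$ rescales its area form by $\phi$, the metric $k$ has area form $\beta|_W$. Assembling $\langle a \alpha + w, \ a'\alpha + w' \rangle := a a' + k(w, w')$ yields a smooth Riemannian metric $g$ with $\star_g \alpha = \beta$.

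The main obstacle is not pointwise existence, which is elementary, but arranging that every choice is made smoothly and globally: one must confirm that $W$ is a genuine smooth subbundle, that the splitting off $\alpha$ is smooth, and—crucially—that $W$ can be coherently oriented so the rescaling trick produces $\beta|_W$ rather than $-\beta|_W$. This is exactly where the hypothesis $\alpha \wedge \beta > 0$ (and not merely $\neq 0$) enters: since $\alpha \wedge \star_g \alpha = |\alpha|_g^2 \, \mathrm{vol}_g$ is always a nonnegative multiple of the positive volume form, positivity of $\alpha \wedge \beta$ is in fact \emph{necessary}, and it pins down the orientation of $W$ and the sign of $\phi$ so that the constructed $\star_g$ lands on $\beta$ with the correct sign.
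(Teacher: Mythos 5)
Your proof is correct, but it takes a genuinely different route from the paper's. The paper does not prove this lemma directly---it cites Farber~\cite{farber03}---and the construction it actually carries out by hand, in the proof of the singular generalisation (Lemma~\ref{lemma:singular_metric_construction}, following Blair~\cite{blair10}), works on the tangent side: it takes the line field spanning the kernel of $\beta$, an auxiliary metric $h$ in which $\alpha$ is dual to that line field, and then produces the inner product on $\ker\alpha$ by polar-decomposing the skew-symmetric matrix $A_{ij} = \beta(X_i, X_j)$, taking $G = \sqrt{A^TA}$, and finally applying the conformal normalisation $g = |S|^{-1/3}S$ needed to convert the resulting star-operator matrix into a genuine metric. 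You work dually in $T^*M$: you split off $\langle \alpha \rangle$ against the support plane $W$ of $\beta$ (using decomposability of $2$-forms in dimension three), and you fix the fibre metric on $W$ by conformally rescaling an arbitrary one so that its area form is exactly $\beta$; declaring $\alpha$ to have unit length builds the correct normalisation in from the start, so you need neither the polar decomposition nor any final rescaling. For the nonsingular statement your argument is cleaner and self-contained. What the paper's method buys is exactly what yours gives up: the polar decomposition, defined via a limit, degenerates gracefully where $\alpha$ and $\beta$ vanish, which is how the paper shows the star operator extends as a positive semi-definite matrix over the singular points in Lemma~\ref{lemma:singular_metric_construction}; in your construction both the splitting $T^*M = \langle \alpha \rangle \oplus W$ and the rescaling function $\phi$ break down at zeros of $\beta$, so it does not adapt directly to the singular setting that is the paper's real concern. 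Your closing observation that positivity of $\alpha \wedge \beta$ (not mere nonvanishing) is necessary is correct and worth keeping.
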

\noindent For a contact form $\eta$, we may apply this lemma to $\alpha = \eta, \beta = d\eta$ to construct the desired compatible metric $g$, and we will have $\star_g \eta = d\eta$. It follows that the Reeb field satisfies $\text{curl }R = R$ with respect to this metric, and is therefore Beltrami. One can also consider the broader class of \textit{weakly compatible metrics}, those for which the Reeb field is orthogonal to the contact planes but we do not control its magnitude~\cite{etnyre12}. These are locally determined by the almost complex structure $J$ as well as positive functions $\lambda, \rho$, where $\rho = \norm{R}$ and $\star d\eta = \lambda \eta$.

Now we turn to singular contact structures. Motivated by experiments on cholesteric liquid crystal droplets~\cite{posnjak17,pollard19}, we focus on vector fields $N$ with isolated zeros, satisfying the condition $g(N, \textrm{curl } N) \neq 0$ away from the set of singular points. Normalising such vector fields on the complement of their singular points yields the director of a cholesteric liquid crystal with point singularities. The space of vector fields orthogonal to $N$ at each point $p$ determines a plane in the tangent space $T_p M$, except at singular points where all tangent vectors are orthogonal to $N$. The natural notion of a `singular plane field' corresponding to this idea is a collection of planes $\xi_p \in T_p M$ depending smoothly (or analytically) on $p \in M$ that degenerates at a collection of isolated points.
\begin{definition} \label{def:singular_plane_fields}
  A 1-form $\eta$ defines a \textit{singular plane field} if there is a finite set $\Sigma \subset M$ of points $p$ for which $\eta_p = 0$, and $\eta_q \neq 0$ for $q \notin \Sigma$. The singular plane field is the kernel of $\eta$. On the complement of $\Sigma$ it is a codimension 1 subbundle of $T(M - \Sigma)$, degenerating to a codimension 0 bundle on $\Sigma$. A 1-form $\eta$ is a \textit{singular contact form} if and only if for all $q \notin \Sigma$, $(\eta \wedge d\eta)_q \neq 0$. The singular plane field defined by a singular contact form is a \textit{singular contact structure}. The singular contact structure is positive (or left-handed) if $\eta \wedge d\eta \geq 0$ with respect to the orientation on $M$, and negative (right-handed) if $\eta \wedge d\eta \leq 0$. A singular plane field is a \textit{singular foliation} if it is defined by $\eta$ with $\eta \wedge d\eta = 0$.
\end{definition}
\noindent To avoid confusion, when we use the term `plane field' we will always mean in the familiar sense, and when we use the term `singular plane field' we will always mean that there is at least one singularity. The same applies to the terms `singular contact structure' and `singular Beltrami field'.

Although we do not have a unique Reeb field in the singular case, on the complement of the singular points the kernel of $d\eta$ is still 1-dimensional, and we can make the following definition.
\begin{definition}
  A vector field $R$ is \textit{Reeb-like} for a singular contact form $\eta$ if $\iota_R d\eta = 0$.
\end{definition}
\noindent Note that the singular points of Reeb-like fields always agree with the singularities of the contact form.

We would like to give a singular version of the Etnyre--Ghirst correspondence, establishing a duality beteen singular contact structures and singular Beltrami fields. One direction is simple.
\begin{proposition}
  Let $(M, g)$ be a Riemannian manifold and $X$ a singular Beltrami field, $X = \text{curl }X$, with singular set $\Sigma$. Then $\eta = \iota_X g$ is a singular contact form with singular set $\Sigma$, and $\eta = \star_g d\eta$. Moreover, $X$ is Reeb-like for $\eta$.
\end{proposition}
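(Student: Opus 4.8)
The plan is to reduce everything to the single defining relation between curl and the exterior derivative: on an oriented Riemannian 3-manifold the curl of a vector field $X$ is characterised by $(\mathrm{curl}\,X)^\flat = \star_g\, d(X^\flat)$, where $\flat$ denotes the metric dual $Y \mapsto \iota_Y g$. Writing $\eta = \iota_X g = X^\flat$, the Beltrami condition $X = \mathrm{curl}\,X$ becomes, after taking metric duals of both sides, $\eta = \star_g d\eta$. This is exactly the second assertion, so it comes essentially for free once the definition of curl is recalled.

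For the claim that $\eta$ is a singular contact form with singular set $\Sigma$, I would first note that, since $g$ is non-degenerate, $\eta_p = (\iota_X g)_p$ vanishes if and only if $X_p = 0$; hence the zero set of $\eta$ is exactly $\Sigma$, a finite set of isolated points. Away from $\Sigma$ I must verify $\eta \wedge d\eta \neq 0$. Here I would substitute $\eta = \star_g d\eta$ and use the pointwise identity $(\star_g \beta)\wedge \beta = |\beta|_g^2\,\mu$ for a 2-form $\beta$, with $\mu$ the Riemannian volume form, to obtain $\eta \wedge d\eta = |d\eta|_g^2\,\mu$. It then remains to see that $d\eta_q \neq 0$ for $q \notin \Sigma$: applying $\star_g$ to $\eta = \star_g d\eta$ and using $\star_g \star_g = \mathrm{id}$ on 2-forms in dimension three gives $d\eta = \star_g \eta$, so $d\eta$ and $\eta$ vanish at precisely the same points. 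Thus $d\eta_q \neq 0$ and $(\eta \wedge d\eta)_q = |d\eta|^2_q\,\mu_q > 0$, which simultaneously establishes the contact condition on $M \setminus \Sigma$ and shows the singular contact structure is positive.

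For the Reeb-like property I must show $\iota_X d\eta = 0$. Using the relation $d\eta = \star_g \eta$ just obtained, together with the identity $\iota_X(\star_g \alpha) = \star_g(\alpha \wedge X^\flat)$ valid for a 1-form $\alpha$ in dimension three, I compute $\iota_X d\eta = \iota_X \star_g \eta = \star_g(\eta \wedge \eta) = 0$, since $X^\flat = \eta$ and $\eta \wedge \eta = 0$. Hence $X$ is Reeb-like for $\eta$, and the three assertions are proved.

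I expect no conceptual obstacle here: the proposition is a direct translation of the Beltrami equation into the language of differential forms, with each claim following from $\eta = \star_g d\eta$. The only point requiring genuine care is bookkeeping with Hodge-star conventions — specifically the sign in $\star_g \star_g = (-1)^{k(n-k)}\,\mathrm{id}$, which is $+\mathrm{id}$ on 2-forms when $\dim M = 3$, and the interior-product/Hodge-star identity used in the final step — both of which I would confirm in a local orthonormal coframe to be safe.
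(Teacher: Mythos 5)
Your proposal is correct and follows essentially the same route as the paper: translate the Beltrami equation into $\eta = \star_g d\eta$, deduce the contact condition from $\eta \wedge d\eta$ being a pointwise norm squared times the volume form (the paper writes $\norm{\eta}_g^2\,\mu$ where you write $|d\eta|_g^2\,\mu$; these agree since $\star_g$ is an isometry), and verify the Reeb-like property via $\iota_X \star_g \eta = \pm\star_g(\eta \wedge \eta) = 0$. The only difference is that you spell out the bookkeeping ($\star_g\star_g = \mathrm{id}$ on 2-forms, the zero-set identification) which the paper leaves implicit.
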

\begin{proof}
  Let $\mu$ be the volume form of $g$. For $\eta = \iota_X g$, the relation $X = \text{curl }X$ becomes $\iota_X g = \star_g d\iota_X g$, which implies that $\eta = \star_g d\eta$. Then $\eta \wedge d\eta = \norm{\eta}_g^2 \mu$, and since $\norm{\eta}_g$ is non-negative and vanishes only when $\eta$ does, it follows that $\eta$ is a singular contact form. Also, $\eta$ vanishes only when $X$ does, so they share the same singular set.

  For the second claim, we compute that $\iota_X d\eta = \iota_X \star_g \iota_X g = 0$, so that $X$ is Reeb-like for $\eta$.
\end{proof}
\noindent For the converse statement to hold, given a singular contact form $\eta$ we need to find a Riemannian metric $g$ for which $\eta = \star_g d\eta$, so that the Reeb-like fields of $\eta$ will become Beltrami (though perhaps not divergence-free) with respect to the metric $g$. The Etnyre--Ghrist correspondence is local and therefore continues to hold away from the singularities, ensuring that such a metric will exist on the complement of the zeros of $\eta$, but there is no reason to assume it will extend over the singular points. In fact, in the next situation we will give an example of a germ of a singular contact form for which the metric does not extend over the singularities. To study this phenomenon, we first introduce a singular version of Lemma \ref{lemma:metric_construction}.

\begin{lemma} \label{lemma:singular_metric_construction}
  Let $\alpha$ be a 1-form and $\beta$ a 2-form on a compact 3-manifold $M$, with mutual zero set $\Sigma$ consisting of finitely many isolated points. Suppose that $(\alpha \wedge \beta)_p \ge 0$ for each $p \in M$, vanishing if and only if $p \in \Sigma$. There exists a Riemannian metric $g$ defined on $M- \Sigma$ such that $\star_g \alpha|_{M - \Sigma} = \beta|_{M - \Sigma}$, and furthermore the star operator of $g$ extends as a positive semi-definite matrix over $\Sigma$.
\end{lemma}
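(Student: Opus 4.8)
The plan is to reduce the statement to a fibrewise problem in linear algebra, solve that explicitly, and then analyse the limit at $\Sigma$. First I would fix an auxiliary metric $g_0$ with volume form $\mu_0$ and work in a local $g_0$-orthonormal coframe. Representing $\alpha$ and the $1$-form $\star_{g_0}\beta$ by their coefficient vectors $a(p),b(p)\in\mathbb R^3$, one has $\alpha\wedge\beta=\langle a,b\rangle\,\mu_0$, so the hypothesis becomes $\langle a,b\rangle\ge 0$ with equality exactly on $\Sigma$. In these frames the operator $\star_{g_0}\star_g$, acting on $1$-forms, is a field of symmetric positive-definite matrices $S(p)$ (positivity comes from $\alpha\wedge\star_g\alpha=|\alpha|_g^2\,\mu_g>0$), and conversely any such field comes from a metric. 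Hence $\star_g\alpha=\beta$ is the equation $Sa=b$, and the lemma reduces to producing a smooth field of symmetric positive-definite $S$ on $M-\Sigma$ solving $Sa=b$ whose star operator extends over $\Sigma$ positive semi-definitely.

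For the pointwise problem I would use, wherever $\langle a,b\rangle>0$, the explicit solution
\[ S=\frac{bb^{T}}{\langle a,b\rangle}+f\,(\,|a|^{2}I-aa^{T}), \qquad f>0. \]
It is symmetric, satisfies $Sa=b$ because the second term annihilates $a$, and is positive-definite for every $f>0$: if $v^{T}Sv=0$ then $\langle b,v\rangle=0$ and $v$ is parallel to $a$, forcing $v=0$ since $\langle a,b\rangle>0$. This is exactly the fibrewise content of Lemma~\ref{lemma:metric_construction}, and a smooth choice of $f$ (glued with a partition of unity if desired) yields a metric $g$ on $M-\Sigma$ with $\star_g\alpha=\beta$.

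The subtlety is the behaviour at $\Sigma$. Since $Sa=b$ forces $\|S\|\ge|b|/|a|$, the operator $S$ on $1$-forms must in general blow up there, which is precisely the degeneration of $g$; the object that should extend is therefore the inverse star operator $T=S^{-1}$ on $2$-forms, satisfying $Tb=a$, with the analogous form
\[ T=\frac{aa^{T}}{\langle a,b\rangle}+h\,(\,|b|^{2}I-bb^{T}), \qquad h>0. \]
Taking $h$ bounded, the completion term is smooth in $b$ and vanishes at $\Sigma$ because $\beta$, hence $b$, vanishes there. So everything reduces to showing that the forced rank-one part $aa^{T}/\langle a,b\rangle$, which is manifestly positive semi-definite off $\Sigma$, extends continuously to a positive semi-definite matrix; equivalently, that $|a|^{2}/\langle a,b\rangle=|a|\,\bigl(|b|\cos\angle(a,b)\bigr)^{-1}$ stays bounded.

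This boundedness is the heart of the matter, and is where I would finally use compactness of $M$ and the fact (Section~\ref{sec:singularity_theory}) that the zeros are isolated and analytic. Near a point of $\Sigma$, comparing the leading homogeneous parts of $\alpha$ and $\beta$ gives $|a|\sim r^{j}$, $|b|\sim r^{m}$ and $\langle a,b\rangle\sim r^{j+m}$ in the directions where the leading wedge survives, so $\cos\angle(a,b)$ is bounded below and $|a|^{2}/\langle a,b\rangle\sim r^{\,j-m}$ is bounded precisely when $\alpha$ vanishes to order at least that of $\beta$, i.e.\ $j\ge m$. This is automatic in the application $\alpha=\eta$, $\beta=d\eta$, where $m=j-1$ and the ratio vanishes like $r$; there $T\to0$, a degenerate positive semi-definite limit, so the metric degenerates while its star operator extends as claimed. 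The delicate step, which I expect to be the main obstacle, is to upgrade this order-counting into a genuine uniform continuous extension over all of $\Sigma$ at once, controlling $\angle(a,b)$ from below using $\alpha\wedge\beta\ge0$ and the analytic structure of the leading terms; if one is content with a merely positive semi-definite (not necessarily continuous) extension, the limit inferior of $T$ already suffices, being positive semi-definite because each $T(p)$ is.
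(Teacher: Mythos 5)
Your reduction to fibrewise linear algebra is correct: in a $g_0$-orthonormal coframe the ansatz $S=\frac{bb^{T}}{\langle a,b\rangle}+f\,(|a|^{2}I-aa^{T})$ does solve $Sa=b$ symmetrically and positive-definitely off $\Sigma$, and gives a perfectly good substitute for the paper's citation of Lemma~\ref{lemma:metric_construction} on $M-\Sigma$. The gap is in the extension over $\Sigma$, and it is not merely the unfinished estimate you flag at the end: the quantity you propose to bound is genuinely not controlled by the hypotheses, so this route cannot be completed. Consider, near a singular point, $\alpha=x\,dx+y\,dy+z\,dz$ and $\beta=\star_{0}\bigl(r^{2}\alpha-y\,dx+x\,dy\bigr)$, where $r^{2}=x^{2}+y^{2}+z^{2}$ and $\star_{0}$ is the Euclidean star. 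Both forms vanish only at the origin and $\alpha\wedge\beta=r^{4}\mu_{0}\ge 0$, so the hypotheses hold (extension over $\Sigma$ is a purely local question, and this local pair is easily globalised); but $\langle a,b\rangle=r^{4}$, while along $z=0$ one has $|a|^{2}=r^{2}$ and $|b|^{2}=r^{2}+r^{6}$, so both $|a|^{2}/\langle a,b\rangle$ and $|b|^{2}/\langle a,b\rangle$ blow up like $r^{-2}$. Moreover \emph{every} symmetric positive-definite solution of $Sa=b$ obeys $\langle v,Sv\rangle\ge\langle v,b\rangle^{2}/\langle a,b\rangle$ (Cauchy--Schwarz in the $S$-inner product), and likewise for $T$ with $Tb=a$; hence for this admissible pair the star operator of any metric solving $\star_{g}\alpha=\beta$, in either degree, diverges near $\Sigma$ and admits no finite extension at all. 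The angle between $a$ and $b$ simply cannot be bounded below from $\alpha\wedge\beta\ge 0$ and isolated zeros, and your fallback via $\liminf$ does not help: when the operator is unbounded there is no finite-valued extension, continuous or otherwise.

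Two further points. First, your sanity check on the intended application rests on a false premise: by Proposition~\ref{prop:only_gradients} the sufficient jet of a singular contact form is exact, $j^{k}\eta=d\phi$, so $d\eta=d\nu$ with $\nu$ of order at least $k+1$; thus $\beta=d\eta$ vanishes to order \emph{at least} that of $\alpha=\eta$, i.e.\ $m\ge j$, not $m=j-1$. In that regime it is your discarded operator $S$ (the star on 1-forms), not $T$, that has any chance of remaining bounded, so you have also picked the wrong operator to extend. Second, and this is the essential structural difference from the paper: the paper never attempts to bound the star operator as a map, which is exactly what makes its proof immune to the example above. It splits $TM=\ker\alpha\oplus\ker\beta$ away from $\Sigma$, takes on $\ker\alpha$ the polar part $\sqrt{A^{T}A}$ of the skew matrix $A_{ij}=\beta(X_{i},X_{j})$, and in the $\ker\beta$-direction the value $\alpha(Y)$, where $X_{1},X_{2},Y$ are frame fields vanishing on $\Sigma$; every entry of the resulting tensor is a value of $\alpha$ or $\beta$ on vector fields that vanish at $\Sigma$, so it extends over $\Sigma$ by zero, positive semi-definitely, with no analytic estimates, and the metric on $M-\Sigma$ is recovered from this degenerating tensor by normalisation. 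That tensor---not the operator norm of $\star_{g}\colon\Lambda^{1}\to\Lambda^{2}$ or of its inverse---is the object whose extension the lemma asserts; read your way, as boundedness of the star map itself, the conclusion is false by the example above, so no refinement of your order-counting can rescue the argument.
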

\begin{proof}
  The existence of the desired Riemannian metric is immediate from Lemma \ref{lemma:metric_construction}, but we will construct it by hand using the method of~\cite{blair10} to show that $\star$ operator can be chosen so that it becomes degenerate rather than undefined on the singular point.

  As $\beta|_{\text{ker }\alpha} \ge 0$, vanishing only on $\Sigma$, the kernel of $\beta$ is 1-dimensional, spanned by some vector field $Y$ vanishing on $\Sigma$ and such that $\alpha(Y) \ge 0$, with equality only at the points of $\Sigma$. We can find a Riemannian metric $h$ such that $\alpha$ is dual to $Y$ in this metric. Locally, we can choose a pair of sections $X_1, X_2$ of $\text{ker }\alpha$ such that $X_1, X_2, Y$ are orthogonal with respect to $h$, and the vector fields $X_1, X_2$ vanish only on $\Sigma$. Define a $2 \times 2$ matrix by $A_{ij} = \beta(X_i, X_j)$. The fact that $\beta|_{\text{ker }\alpha} \ge 0$, vanishing only on $\Sigma$, implies that $A$ also vanishes only on $\Sigma$.

  A nonvanishing skew-symmetric matrix can be uniquely polarised, $A = GF$, where $G$ is positive-definite and symmetric and $F$ is orthogonal. When the skew-symmetric matrix is allowed to be singular, we may still define a (not necessarily unique) polar decomposition by a limit. Let $A_k$ be a sequence of non-vanishing skew-symmetric matrices tending to $A$. Their polar decompositions $A_k = G_k F_k$ exist. As $O(2)$ is compact, there is an $F \in O(2)$ and a subsequence $F_{k_j}$ such that $F_{k_j} \to F$ as $j \to \infty$. Then $A = F\sqrt{A^T A}$.

  Take $G = \sqrt{A^TA}$, a symmetric, positive semi-definite matrix, where positive definiteness fails only on the set where $A=0$, which is $\Sigma$. Therefore, $G$ defines a Riemannian metric on $\text{ker }\alpha$. Extending $G$ so that it agrees with $h$ in the direction $Y$ gives a positive semi-definite matrix $S$. Let $S$ define a star operator $\star$; this map satisfies $\star \alpha = \beta$. By setting $g = |S|^{-1/3}S$, we define a Riemannian metric on $M- \Sigma$ which is such that $\star_g = \star$ on $M-\Sigma$.
\end{proof}
% \noindent The star operator extends, but may not extend as a positive-definite matrix; if it does, then the metric also extends. One obstruction to extending the star operator so that it is not singular is easily seen. Choose a basis of vector fields $X_1, X_2, X_3$ diagonalising the star operator and a sphere $S$ surrounding a singular point. On $S$, each $X_i$ determines a map $S^2 \to S^2$, and $X_i$ extends over the interior without singularities if and only if this map is homotopic to a constant. If the basis of vector fields does not extend without singularities, then the star operator does not either.

\noindent A special case of this lemma is the existence of singular star operators associated to singular contact forms, and the second part of the Etnyre--Ghrist correspondence for singular contact forms.
\begin{proposition} \label{prop:compatible_metrics}
  Let $\eta$ be a singular contact form on $M$ with singularity set $\Sigma$. There exists a Riemannian metric $g$ defined on $M - \Sigma$ for which $\star_g \eta|_{M - \Sigma} = d\eta|_{M - \Sigma}$, and the vector field $R$ dual to $\eta$ is Reeb-like, orthogonal to the kernel of $\eta$, and parallel to its own curl.
\end{proposition}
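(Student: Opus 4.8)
The plan is to obtain the metric directly from Lemma~\ref{lemma:singular_metric_construction} with $\alpha=\eta$, $\beta=d\eta$, and then extract the three asserted properties of the dual field by short Hodge computations; since the proposition is billed as a special case of that lemma, the substantive work is confined to checking that the hypotheses genuinely hold at the points of $\Sigma$. I take $\eta$ to be positive, which is the case consistent with the stated identity: once $\star_g\eta=d\eta$ holds one has $\eta\wedge d\eta=\eta\wedge\star_g\eta=\norm{\eta}_g^2\,\mu\ge 0$, so positivity is forced, and the right-handed case follows after reversing the orientation. By Definition~\ref{def:singular_plane_fields} we have $\eta\wedge d\eta\ge 0$ with $(\eta\wedge d\eta)_q\ne 0$ exactly for $q\notin\Sigma$, and since $\eta$ vanishes precisely on $\Sigma$ the product $\eta\wedge d\eta$ also vanishes precisely on $\Sigma$; this is exactly the positivity hypothesis of the lemma. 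The lemma then supplies a Riemannian metric $g$ on $M-\Sigma$ with $\star_g\eta|_{M-\Sigma}=d\eta|_{M-\Sigma}$ whose star operator extends over $\Sigma$ as a positive semi-definite, degenerate matrix.

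Next I would let $R$ be the $g$-dual of $\eta$, so $\iota_R g=\eta$, and verify the three claims. Orthogonality is immediate: for $v\in\ker\eta$ one has $g(R,v)=\eta(v)=0$, so $R\perp\ker\eta$, while $\eta(R)=g(R,R)=\norm{R}_g^2>0$ off $\Sigma$ shows $R$ is transverse to the planes there. For the other two properties I would use the three-dimensional identity $\star_g(\iota_R g)=\iota_R\mu$, with $\mu$ the volume form of $g$; combined with $\star_g\eta=d\eta$ this gives $\iota_R\mu=d\eta$ on $M-\Sigma$. Contracting once more with $R$ yields $\iota_R d\eta=\iota_R\iota_R\mu=0$, so $R$ is Reeb-like, and computing the curl through $\iota_{\textrm{curl}\,R}\mu=d(\iota_R g)=d\eta=\iota_R\mu$ gives $\textrm{curl}\,R=R$ on $M-\Sigma$, exhibiting $R$ as a singular Beltrami field parallel to its own curl.

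The Hodge computations are routine, so the main obstacle is the behaviour on $\Sigma$ when invoking the singular lemma. The delicate point is that, although $\eta$ vanishes on $\Sigma$, its differential $d\eta$ need not: positivity only forces the degree-one part of $\eta\wedge d\eta$, namely $\eta_1\wedge d\eta_1$, to vanish, and this does not force $d\eta_1=0$, so $\ker d\eta$ can persist as a genuine line through a singular point. I would therefore check carefully that the construction in Lemma~\ref{lemma:singular_metric_construction}---the choice of a Reeb-like vector field spanning $\ker d\eta$ that vanishes on $\Sigma$, and the consequent degeneration (rather than blow-up) of $\star_g$---goes through using only the hypothesis that $\eta\wedge d\eta\ge 0$ vanishes exactly on $\Sigma$, which is all that is available here.
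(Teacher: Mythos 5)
Your proposal is correct and is essentially the paper's own argument: the paper presents this proposition as an immediate special case of Lemma~\ref{lemma:singular_metric_construction} applied with $\alpha=\eta$, $\beta=d\eta$, and the identities you derive for the dual field $R$ (namely $\iota_R\mu=\star_g\eta=d\eta$, hence $\iota_R\,d\eta=0$ and $\textrm{curl}\,R=R$) are exactly the routine verifications it leaves implicit, mirroring the computation in the earlier proposition on singular Beltrami fields. The hesitation in your final paragraph, while a fair reading of the lemma's ``mutual zero set'' hypothesis, is not actually needed for the statement as given: the proposition asserts the metric only on $M-\Sigma$, where $\eta\wedge d\eta>0$ and the nonsingular Lemma~\ref{lemma:metric_construction} applies directly, so the behaviour of $d\eta$ on $\Sigma$ is irrelevant to the claimed conclusion; the fact that $d\eta$ does vanish at every zero of $\eta$ --- which is what the degenerate extension of $\star_g$ over $\Sigma$ genuinely requires --- is established later in the paper as the corollary to Proposition~\ref{prop:only_gradients}, and so cannot (and need not) be invoked at this stage.
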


\section{Local Properties of Singular Contact Structures} \label{sec:local_properties}
Now we turn to the description of the local structure of the singularities in singular contact structures, characterising the chiral and Beltrami singularities and showing that the latter class is a proper subclass of the former. First we establish that only singularities occuring in foliations can occur in singular contact structures.

Given a germ of an analytic 1-form $\eta$, we define the \textit{sufficient jet} of $\eta$ to be the jet $j^k\eta$, where $k< \infty$ is the smallest integer such that $\eta$ is equivalent (in the sense of \S\ref{sec:singularity_theory}) to its $k$-jet $j^k \eta$; this integer is referred to as the \textit{order} of the sufficient jet. If $\eta$ is a singular contact form, we define the \textit{contact jet} to be the jet $j^k \eta$, with $k$ the smallest integer such that $\eta$ is equivalent to $j^k \eta$ and additionally that $j^k \eta$ is contact.

We will now establish that the order of the contact jet is strictly greater than that of the sufficient jet. In order to do this, we need further notions from singularity theory. The classification of germs of maps may be reduced to the classification of germs whose components are \textit{semiquasihomogeneous}~\cite{arnold88}. A polynomial $\phi$ is called \textit{quasihomogeneous} with degree $d$ and exponents $v_1, v_2, v_3$ if $f(\lambda^{v_1}x_1, \lambda^{v_2}x_2, \lambda^{v_3}x_3) = \lambda^d f(x_1, x_2, x_3)$, and semiquasihomogeneous if it can be written $\phi = \phi_0 + \sum_i c_i \psi_i$ for $\phi_0$ quasihomogeneous with some degree and exponents, $c_i$ constants, and $\psi_i$ a basis for the local algebra $Q_{\nabla \phi}$. A vector field (or 1-form) is called quasihomogeneous with degree $d$ and exponents $v_1, v_2, v_3$ if each of its components with respect to some coordinate basis are quasihomogeneous polynomials with degree $d$ and exponents $v_1, v_2, v_3$, and semiquasihomogeneous if it can be written $X = X_0 + \sum_i c_i Y_i$ with $X_0$ quasihomogeneous and $Y_i \in I_{X_0}$.

\begin{proposition} \label{prop:contact_jet}
  The order of the contact jet of a germ of a singular contact form $\eta$ is strictly greater than the order of the sufficient jet of $\eta$.
\end{proposition}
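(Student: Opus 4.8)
The plan is to prove $c>s$ by showing that the \emph{sufficient} jet $j^{s}\eta$ fails the contact condition; since the contact jet order $c$ satisfies $c\ge s$ automatically (the set of $k$ with $\eta\sim j^{k}\eta$ is $\{k\ge s\}$, and the contact jet is subject to the extra requirement that $j^{k}\eta$ be contact), this gives strict inequality. First I would reduce to the gradient case. By Proposition~\ref{prop:only_gradients} the singularity of $\eta$ is of gradient type, so I may place $\eta$ in a semiquasihomogeneous normal form
\begin{equation}
  \eta = \eta_{0} + \beta, \qquad \eta_{0} = d\phi_{0}, \qquad \beta = \sum_i c_i Y_i,
\end{equation}
with $\phi_{0}$ quasihomogeneous (so $\eta_{0}$ is closed and quasihomogeneous) and each correction $Y_i \in \mf{m}I_{\eta_{0}}$.

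The clean core is a parity argument. Writing $\eta = \eta_{m} + \eta_{m+1} + \cdots$ in homogeneous parts with lowest-order term $\eta_{m}$, and setting $\eta \wedge d\eta = f\,\mu$ for a volume form $\mu$, the lowest-degree part of $f$ sits in degree $2m-1$ and is the coefficient of $\eta_{m}\wedge d\eta_{m}$. Because $\eta$ is a (positive) singular contact form, $f \ge 0$ near the origin; comparing the rays $t\mapsto tu$ and $t\mapsto -tu$ as $t\to 0^{+}$ shows that an odd-degree leading part must vanish identically, whence
\begin{equation}
  \eta_{m}\wedge d\eta_{m} = 0.
\end{equation}
Thus the contact condition is invisible at lowest order and $j^{m}\eta$ is never contact; equivalently, closedness of $\eta_{0}$ gives $\eta_{0}\wedge d\eta_{0}=0$ directly.

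The heart of the argument is to upgrade this from $\eta_{m}$ to the full sufficient jet. Using $d\eta_{0}=0$, the contact term factorises as
\begin{equation}
  \eta \wedge d\eta = d\phi_{0}\wedge d\beta + \beta \wedge d\beta,
\end{equation}
so the nonvanishing of $\eta\wedge d\eta$ is produced \emph{solely} by the corrections $\beta\in\mf{m}I_{\eta_{0}}$. By the proposition of \S\ref{sec:singularity_theory} asserting $X\sim X+Y$ for $Y\in\mf{m}I_{X}$, these corrections do not alter the singularity and hence are not seen by the sufficient jet. The point I would then make precise is that the corrections of order at most $s$ cannot by themselves yield a contact form: the product $d\phi_{0}\wedge d\beta$ necessarily degenerates along the most degenerate directions of $\phi_{0}$ — the directions carrying the high-order, determinacy-forcing monomials of $\phi_{0}$ — so that the form built from $j^{s}\eta$ has $\eta\wedge d\eta$ vanishing somewhere on every punctured neighbourhood of the origin. (For instance, for $\phi_{0}=x^{4}+y^{2}+z^{2}$ every degree-two correction in $\mf{m}I_{\eta_{0}}$ produces a leading term of $\eta\wedge d\eta$ that is divisible by $y$ or $z$ and hence vanishes on the $x$-axis, forcing one to the $x^{3}$ term and beyond.) Only by including terms of order strictly greater than $s$ does $\eta\wedge d\eta$ become nonvanishing in all directions, so $j^{s}\eta$ is a singular foliation and $c>s$.

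The main obstacle is exactly this last step. The relation $X\sim X+Y$ is stated for map-germ equivalence, which does \emph{not} preserve the contact condition, so I cannot simply transport closedness from the gradient representative $d\phi_{0}$ to the truncation $j^{s}\eta$; instead I must track the weighted degrees of the contact-creating corrections in $\mf{m}I_{\eta_{0}}$ directly within the quasihomogeneous filtration, and verify uniformly — over Arnold's classification of gradient singularities — that the order at which $\eta\wedge d\eta$ first becomes nonvanishing along \emph{all} directions strictly exceeds the determinacy order $s$. The semiquasihomogeneous normal form is introduced precisely to make this degree bookkeeping tractable, and executing it across the list of singularity types is where the real work lies.
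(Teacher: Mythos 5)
Your proposal has a fatal circularity at its first step: you invoke Proposition~\ref{prop:only_gradients} to reduce to the gradient case, but in the paper that proposition is \emph{deduced from} Proposition~\ref{prop:contact_jet} --- its proof begins ``By Proposition~\ref{prop:contact_jet}\dots'' and crucially uses the fact that the sufficient jet $j^k\eta$ is not a singular contact form, which is exactly what you are trying to establish. The same circularity infects your factorisation $\eta\wedge d\eta = d\phi_0\wedge d\beta + \beta\wedge d\beta$: the closedness of the leading part $\eta_0 = d\phi_0$ is not available without Proposition~\ref{prop:only_gradients} (or an independent argument via Malgrange's theorem, which in the paper is only applicable once one knows $j^k\eta$ defines a foliation --- again the conclusion you want). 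The one genuinely independent piece of your argument, the parity argument showing $\eta_m\wedge d\eta_m = 0$ for the lowest homogeneous part, is correct and rather clean, but as you yourself note it only rules out contactness of $j^m\eta$, not of the full sufficient jet $j^s\eta$ when $s>m$.

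Second, the step you rightly call ``the heart of the argument'' is not actually carried out. The claim that $d\phi_0\wedge d\beta$ must degenerate along the most degenerate directions of $\phi_0$ for \emph{every} correction $\beta\in\mf{m}I_{\eta_0}$ of order at most $s$ is supported by a single example ($\phi_0 = x^4+y^2+z^2$) and then deferred to a verification ``across the list of singularity types'' that is never performed; a case-by-case check over an infinite classification is not a proof unless you exhibit the uniform mechanism. The paper sidesteps both problems by arguing by contradiction without any gradient reduction: assuming the sufficient jet $j^k\eta$ is itself contact, Proposition~\ref{prop:compatible_metrics} supplies a star operator with $\star\, j^k\eta = d\, j^k\eta$; after reducing a quasihomogeneous normal form to one with homogeneous components (Arnold), this Beltrami-type identity forces an order count --- the component of $\star\, j^k\eta$ carrying monomials of order at least $k$ must equal a component of $d\, j^k\eta$ of order at most $\max(k_j-1,k_l-1)$ --- yielding $k < \max(k_j,k_l)$ and contradicting sufficiency of the $k$-jet. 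That degree bookkeeping is uniform over all singularity types, which is precisely what your sketch lacks; if you want to repair your approach, you must either prove Proposition~\ref{prop:only_gradients} independently first, or replace the gradient reduction by an argument, like the paper's, that needs no prior knowledge of the structure of the sufficient jet.
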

\begin{proof}
  Evidently the order of the contact jet is at least as large as the order of the sufficient jet. Moreover, the statement is obviously true if $\eta$ is equivalent to $d\phi$ for some $\phi$, so suppose not. Assume for contradiction that the order of the contact jet is equal to the order of the sufficient jet $k$. We can express $j^k \eta = \sum_i \eta_i \xi_i$ in some coordinate basis $x_i$. Since it suffices to prove the statement for any germ equivalent to $\eta$, we may assume $\eta$ has been put into normal form, i.e. is semiquasihomogeneous.

  Suppose first that $\eta$ is quasihomogeneous. Then we may apply~\cite[\S12.2 Proposition 2]{arnold12}, which says we may in fact, after making a coordinate transformation, reduce to the case where each component $\eta_i$ is a homogeneous polynomial of degree $k_i$. By Proposition \ref{prop:compatible_metrics} there is a star operator $\star$ defined on the complement of the singular point such that $\star j^k \eta = d j^k \eta$. Then the condition $\star j^k \eta = d j^k \eta$ implies that the order of the largest monomial term in $\star \eta_i$ is at most $\max(k_j-1, k_l-1)$, where $i,j,l$ are distinct indices. There is some component of $\star \eta$, say the $dx^2 \wedge dx^3$ component, that contains monomials of order at least $k$. Since we are equating this monomial with another monomial of order $\max(k_2-1, k_3-1)$, we conclude that $k < \max(k_2, k_3)$, which implies that $j^k \eta$ contains monomials of order higher than $k$, contradicting the fact that $j^k \eta$ is the sufficient jet.

  If $\eta$ is not quasihomogeneous, then we may write it as $\eta = \eta_0 + \alpha$, where $\alpha \in Q_\eta$ is a polynomial 1-form whose terms consist of monomials of order strictly lower than $k$. These extra terms do not impact the argument we have just given in the quasihomogeneous case, so the conclusion still holds.
 \end{proof}
\noindent In particular, this implies that germs of singular contact forms have terms of at least quadratic order. From Proposition \ref{prop:contact_jet} we deduce our first result on the local structure of the singularities in a singular contact structure.
\begin{proposition} \label{prop:only_gradients}
  Let $\eta$ be a germ of an analytic singular contact form with sufficient jet of order $k$. Then $j^k \eta = d\phi$, for some germ of a function $\phi$.
\end{proposition}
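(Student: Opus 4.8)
The plan is to deduce integrability of the leading jet from Proposition \ref{prop:contact_jet} and then promote it to exactness. First I would place $\eta$ in semiquasihomogeneous normal form, so that $j^k\eta$ is a quasihomogeneous $1$-form $\eta_0$ together with lower-order corrections coming from the local algebra; exactly as in the proof of Proposition \ref{prop:contact_jet}, these corrections do not affect the computation and may be ignored. Since $\eta_0\wedge d\eta_0$ is the lowest-order part of $\eta\wedge d\eta$, if it were nonzero then $j^k\eta$ would itself be contact, contradicting Proposition \ref{prop:contact_jet}. Hence $\eta_0\wedge d\eta_0=0$: the sufficient jet is integrable and defines the germ of a singular foliation.

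Next I would reduce exactness to a single identity. Let $E$ be the Euler vector field for the quasihomogeneous structure, so that $\mathcal{L}_E\eta_0=c\,\eta_0$ for a nonzero constant $c$. Cartan's formula gives $c\,\eta_0=d(\iota_E\eta_0)+\iota_E d\eta_0$, so it suffices to prove $\iota_E d\eta_0=0$: then $\eta_0=d\phi$ with $\phi=c^{-1}\iota_E\eta_0$ a polynomial germ, which is the claim. Moreover $\iota_E d\eta_0=0$ is not merely sufficient but equivalent to closedness here, since for the quasihomogeneous $2$-form $\beta=d\eta_0$ one has $\mathcal{L}_E\beta=d\,\iota_E\beta$, whence $\iota_E\beta=0$ forces $\beta=0$.

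The hard part is establishing $\iota_E d\eta_0=0$, and here integrability alone is insufficient: writing $V=\eta_0^\sharp$ and $W=\operatorname{curl}V$, integrability says only $V\cdot W=0$, whereas $\iota_E d\eta_0=0$ amounts to $W\times E=0$, and there exist quasihomogeneous fields with an isolated zero satisfying the former but not the latter (for instance $V=(xz,\,yz,\,x^2+y^2+z^2)$, whose first integral $\phi=z^2(x^2+y^2)+\tfrac12 z^4$ satisfies $d\phi=2z\,V^\flat$ and so degenerates on the whole plane $z=0$ rather than at an isolated point). To exclude such leading jets I would invoke the full strength of the singular contact hypothesis rather than integrability in isolation: by Proposition \ref{prop:compatible_metrics} the form $\eta$ admits, on the complement of its zero, a star operator built from a positive semi-definite matrix (Lemma \ref{lemma:singular_metric_construction}) with $\star\eta=d\eta$, and I would run the degree-counting argument of Proposition \ref{prop:contact_jet} on this relation in the actual, non-contact situation. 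Matching the lowest homogeneous order of $\star\eta$ against that of $d\eta=d\eta_0+\cdots$ should force the order-$(\deg\eta_0-1)$ term $d\eta_0$ to vanish, yielding $\iota_E d\eta_0=0$ and hence $\eta_0=d\phi$. Pinning down how positive semi-definiteness of the star operator across the singular point rules out the spurious non-gradient jets is where the main work lies.
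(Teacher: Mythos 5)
Your first step (integrability of the sufficient jet) and your Euler-field reduction are both sound: for a quasihomogeneous $\eta_0$ with $\mathcal{L}_E\eta_0 = c\,\eta_0$, $c\neq 0$, Cartan's formula does make exactness equivalent to $\iota_E d\eta_0 = 0$, and your counterexample $V^\flat = xz\,dx + yz\,dy + (x^2+y^2+z^2)\,dz$ is correct and genuinely illuminating -- it is integrable with an isolated real zero, yet not closed, so integrability alone cannot give the proposition. But the proof then stops exactly where it needs to start. The crucial claim, that the singular contact hypothesis forces $d\eta_0 = 0$, is never established: you propose to ``run the degree-counting argument of Proposition \ref{prop:contact_jet}'' on the relation $\star\eta = d\eta$ coming from Proposition \ref{prop:compatible_metrics}, say the matching of orders ``should force'' $d\eta_0$ to vanish, and then concede that pinning this down ``is where the main work lies.'' That is a program, not a proof, and it is far from clear it can be carried out as described: the degree-counting in Proposition \ref{prop:contact_jet} derives a contradiction from assuming the sufficient jet is itself contact, whereas here one would need a positive vanishing statement about the $2$-form part in a non-contact situation, which is a different and harder assertion.

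The paper closes this gap with an idea absent from your proposal: Malgrange's Singular Frobenius Theorem, which says that an integrable analytic $1$-form whose singular set has codimension at least $3$ can be written as $\psi\,d\phi$ with $\psi(0)\neq 0$; sufficiency of the jet then forces $\psi = 1$, since otherwise $j^k\eta$ would contain terms of $\mf{m}I_{j^k\eta}$. It is worth noting that your counterexample sits precisely at the delicate point of this citation: its complexification has singular locus of complex codimension $2$ (the lines $z=0$, $y=\pm ix$), and one can check directly that it admits no representation $\psi\,d\phi$ with $\psi(0)\neq 0$ -- writing $dV^\flat = \theta\wedge V^\flat$ forces an identity $h\,(x^2+y^2+z^2) - cz = 1$, impossible at the origin. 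So you have correctly located the genuine difficulty that the paper resolves by appeal to Malgrange, but your proposal does not resolve it, and without that (or a worked-out substitute) the proposition is not proved. A secondary loose end: even granting $\eta_0 = d\phi$, you only treat the quasihomogeneous part, whereas the statement concerns the full sufficient jet including the semiquasihomogeneous corrections, which you set aside without justification for the exactness conclusion.
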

\begin{proof}
  By Proposition \ref{prop:contact_jet} we have $\eta = j^k \eta + \nu$, where $\nu$ is an analytic 1-form containing terms of order strictly larger than $k$. Then $\eta \wedge d\eta = j^k \eta \wedge d(j^k \eta) + \mf{m}^{2k}$. As we are working with germs, the term $j^k \eta \wedge d(j^k \eta)$ dominates and we neglect the higher order terms. We know that $j^k \eta$ is not a singular contact form, so either $j^k \eta \wedge d(j^k \eta)$ vanishes, or it changes sign over some surface containing the singular point. If the latter were true then $\eta$ could not possibly be a singular contact form, so we conclude that $j^k \eta$ defines a singular foliation.

  We must show that this singular foliation is defined by a closed 1-form $d\phi$, for some function $\phi$. Provided the singular set of a foliation has codimension at least 3 -- which is the case here, as the singular set consists of isolated points in a 3-manifold -- then the existence of such a function is assured by Malgrange's Singular Frobenius Theorem \cite{malgrange76}. This tells us there exist germs of analytic functions, $\phi, \psi$ with $\psi(0) \neq 0$ such that $j^k \eta = \psi d\phi$. Since $k$ was the sufficient jet, we must have $\psi = 1$, otherwise $j^k \eta$ would contain terms in $\mf{m}I_{j^k \eta}$, contradicting its sufficiency.
 \end{proof}
\begin{corollary}
  At any point where a singular contact form $\eta$ vanishes, $d\eta$ also vanishes.
\end{corollary}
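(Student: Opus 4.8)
The plan is to read the conclusion off directly from the local normal form supplied by Proposition \ref{prop:only_gradients}, so the corollary becomes essentially a bookkeeping statement about jet orders. Since the assertion is local, I would fix a singular point $p \in \Sigma$, pass to the germ of $\eta$ at $p$, and choose coordinates centred at $p$ so that $p$ is the origin. Because $\eta$ vanishes at $p$ but is not identically zero near $p$ (it is nonvanishing on the complement of the isolated set $\Sigma$), its Taylor expansion has no constant part, and hence its sufficient jet has some order $k \ge 1$.

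By Proposition \ref{prop:only_gradients} this sufficient jet is exact, $j^k\eta = d\phi$ for a germ of a function $\phi$, and exactly as in the proof of that proposition I would write $\eta = j^k\eta + \nu = d\phi + \nu$, where $\nu$ collects the terms of order strictly greater than $k$. Using that $d\phi$ is closed, this gives $d\eta = d(d\phi) + d\nu = d\nu$, so it only remains to evaluate $d\nu$ at the origin.

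The key observation is that each component of $\nu$ is a sum of monomials of degree at least $k+1 \ge 2$, so each of its first partial derivatives is a sum of monomials of degree at least $k \ge 1$, every one of which vanishes at the origin; hence $(d\nu)_p = 0$ and therefore $(d\eta)_p = 0$. Equivalently, the only part of $\eta$ that can contribute to $d\eta$ at $p$ is its linear part, and by Proposition \ref{prop:only_gradients} that linear part is the differential of the quadratic part of $\phi$, a gradient whose exterior derivative vanishes identically. I do not anticipate a genuine obstacle here, since the real work is carried out in Propositions \ref{prop:contact_jet} and \ref{prop:only_gradients}; the one point deserving a moment's care is the verification that $k \ge 1$, i.e.\ that $\eta$ truly vanishes (rather than merely having small linear growth) at a point of $\Sigma$, which guarantees that $\nu$ vanishes to second order and is immediate from the definition of a singular contact form, whose singular set is exactly the zero set of $\eta$.
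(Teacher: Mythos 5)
Your proof is correct and follows exactly the route the paper intends: the corollary is stated without proof precisely because it falls out of Proposition \ref{prop:only_gradients} in the way you describe, with $d\eta = d\nu$ vanishing at the singular point since $\nu$ has order at least $k+1 \ge 2$. Your added care in checking $k \ge 1$ (ruling out a zeroth-order sufficient jet because $\eta$ is not the zero germ) is a worthwhile detail that the paper leaves implicit.
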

\noindent It is interesting to observe that the last two propositions are also true in a neighbourhood of any nonsingular point in a singular contact structure. The Darboux theorem implies that in a neighbourhood of such a point there are coordinates such that the contact structure is defined by the 1-form $\eta = dz +xdy$. The sufficient jet of this 1-form is $j^0 \eta = dz$, an exact form; however, we require the 1-jet to make it contact.

Proposition \ref{prop:only_gradients} shows that in order to study germs of singular contact structures it suffices to study germs of foliations defined by closed 1-forms and `perturb' them by adding higher order terms -- all singular contact forms are realised this way. Since we are working with germs it suffices to consider perturbations to linear order in some parameter. It is then natural to adopt some of the ideas of Eliashberg \& Thurston~\cite{eliashberg97} on the perturbation of foliations into contact structures.
\begin{definition}
  Let $\phi$ be a singularity. We say $\phi$ is \textit{chiral} if $d\phi$ can be linearly perturbed into a positive singular contact form, i.e. if there exists a germ of a 1-form $\nu \in \mf{m}I_{d\phi}$ such that the 1-form $\eta = d\phi + t\nu$ satisfies
  \begin{equation}
   \biggl. \frac{\partial (\eta_t \wedge d \eta_t)}{\partial t} \biggr|_{t=0} \ge 0,
  \end{equation}
  with equality only at the zero of $d\phi$.
\end{definition}
\noindent If $d\phi$ can be linearly perturbed into a positive singular contact structure, it can be linearly perturbed into a negative singular contact structure, and vice-versa (i.e., by changing the sign of $\nu$). Therefore, there is no distinction between the types of point defects that can occur in left-handed and right-handed cholesterics. As discussed in the previous section, it is, however, necessary to distinguish between singularities that are chiral, and those that occur in a Beltrami field.
\begin{definition}
  Let $\phi$ be a germ of a singularity. We say $\phi$ is a \textit{Beltrami singularity} if there is a germ of a 1-form $\nu$ such that $\eta = d\phi + \nu$ is a singular contact form, and furthermore that there exists a Riemannian metric $g$ such that $\star_g \eta = d\eta$.
\end{definition}
\noindent Clearly Beltrami singularities are a subset of chiral singularities.

Next we give a lemma describing the structure of the level sets of a singularity $\phi$ of index $k$. The leaves of the foliation of $\mbb{R}^3$ defined by $d\phi$ are the level sets of $\phi$. We call a leaf singular if it contains the singular point. Take a small $\epsilon > 0$ and consider a closed ball $B$ of radius $\epsilon$ centred at the origin in $\mbb{R}^3$. Let $0<\delta \ll \epsilon$ be a regular value of $\phi$, and let $L^\pm = \phi^{-1}(\pm \delta)$ be two nonsingular leaves of the foliation. Define $F^\pm := L^\pm \cap B$ to be the parts of these leaves that lie in the ball $B$. The sets $F^\pm$ are compact 2-manifolds, possibly with boundary. The index $k$ of $\phi$ is related to the Euler characteristic of $F^\pm$ by $\chi(F^\pm) = 1 \pm k$, see~\cite{arnold78}.
\begin{lemma} \label{lemma:level_set_structure}
  If the level sets $L^\pm$ are compact, then $|k| = 1$, $F^{\sgn(k)}$ is diffeomorphic to a sphere, while $F^{-\sgn(k)}$ is empty. If they are not compact, then $F^{-\sgn(k)}$ is diffeomorphic to the compact manifold obtained by removing $|k|+1$ disks from $S^2$, while $F^{\sgn(k)}$ consists of $|k| + 1$ disks.
\end{lemma}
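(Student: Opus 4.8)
The plan is to control the topology of $F^\pm$ through the nodal regions of $\phi$ on the boundary sphere, treating the compact and non-compact cases separately. Throughout I use Proposition~\ref{prop:only_gradients} to work with $d\phi$, the cited relation $\chi(F^\pm)=1\pm k$, and the first Proposition to discard subdominant terms. First some reductions: replacing $\phi$ by $-\phi$ sends $k\mapsto -k$ and $F^\pm\mapsto F^\mp$, so the statement is symmetric and I may assume $k\ge 0$. Since $F^\pm$ is co-oriented by $\nabla\phi$ inside the orientable ball $B\subset\mbb{R}^3$, it is orientable. By finite determinacy I may take $\phi$ quasihomogeneous, absorbing the subdominant terms (which lie in $\mf{m}I_{\nabla\phi}$ and do not alter the germ), with Euler field $E=\sum_i v_i x_i\partial_i$ satisfying $E\phi=d\phi$. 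I then fix $\epsilon$ to be a Milnor radius: the origin is the only critical point in $\overline B$, the leaves $\phi^{-1}(c)$ meet $S=\partial B$ transversally for $|c|$ small, and consequently $0$ is a regular value of $\phi|_S$. Hence each $F^\pm$ is a compact orientable surface with $\partial F^\pm=\phi^{-1}(\pm\delta)\cap S\subset S$.

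For the compact case I would argue by the maximum principle. If $L^+$ is compact then for small $\delta$ it lies inside $B$, so $F^+=L^+$ is a closed surface; each component bounds a compact region of $\mbb{R}^3$ on which $\phi$ attains an interior extremum, necessarily at the unique critical point, the origin. Thus the origin is a local extremum, so its index is $\pm1$ and $|k|=1$. A local minimum (index $+1$) makes $\{\phi<0\}$ empty near the origin, giving $F^-=\emptyset$, while $\{\phi\le\delta\}$ is a contractible compact $3$-manifold whose boundary $F^+$ the Euler-characteristic and Poincar\'e arguments identify as $S^2$; the maximum case is symmetric. This yields the stated dichotomy with $F^{\sgn(k)}\cong S^2$ and $F^{-\sgn(k)}=\emptyset$.

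For the non-compact case the key geometric step is the identification $F^\pm\cong\overline{R^\pm}$, where $R^\pm=\{\pm\phi>0\}\cap S$. Along each trajectory of $E$ the function $\phi$ is strictly monotone where nonzero (as $E\phi=d\phi$), so each trajectory through $R^+$ meets $\{\phi=\delta\}$ exactly once; projection along $E$ then gives $F^+\cong\overline{R^+}$, and likewise $F^-\cong\overline{R^-}$. In particular $F^\pm$ are planar, being subsurfaces of $S\cong S^2$, and they are complementary regions sharing the boundary link $K=\phi^{-1}(0)\cap S$, a disjoint union of $b$ circles. Since $b$ disjoint circles cut $S^2$ into $b+1$ regions whose adjacency graph is a bipartite tree two-coloured by $\sgn\phi$, I obtain $c^+ + c^- = b+1$, where $c^\pm$ counts the components of $R^\pm$. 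Combining planarity with $\chi(F^\pm)=1\pm k$ gives $2c^\pm - b = 1\pm k$, hence $c^+-c^- = k$ and $b = c^+ + c^- - 1$.

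What remains, and what I expect to be the main obstacle, is the connectivity of the $-\sgn(k)$ region, i.e.\ $c^-=1$ when $k>0$. Granting it, planarity forces $F^-=\overline{R^-}$ to be $S^2$ with $b=k+1$ disks removed, and then $c^+=k+1$ together with $b^+=b=k+1$ total boundary circles forces each of the $k+1$ components of $F^+$ to carry a single boundary circle, hence to be a disk. The difficulty is that $c^-=1$ does \emph{not} follow from the Euler-characteristic bookkeeping above; it is a genuine statement about how the degree $k$ constrains the arrangement of $K$ on $S^2$. I would attempt it either by unfolding $\phi$ into Morse singularities and tracking how the nodal regions merge, or by showing directly that every component of $R^{\sgn(k)}$ is a disk via the cone structure of the singular leaf. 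I flag that this is the delicate point: for a general isolated gradient singularity either nodal locus can a priori have several components, so this step genuinely requires input beyond the index count and is where the bulk of the work, and the essential use of $k\neq 0$, must reside.
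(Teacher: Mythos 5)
Up to the point you flag, your argument is correct, and it is in fact \emph{more} careful than the paper's own proof. The compact case in the paper is the same Euler-characteristic-plus-genus argument you give, with your maximum-principle step supplying the justification the paper leaves implicit. In the noncompact case the paper simply asserts the two facts your reduction isolates: planarity of $F^{-\sgn(k)}$ (``diffeomorphic to a sphere with some number of disks removed,'' which silently sets $g=0$ in $\chi = 2-2g-b$) and, crucially, connectedness of the level sets on one side (``each level set on the outside of $L_0$ is also connected''). Your identification $F^\pm\cong\overline{R^\pm}$ via the Euler flow honestly delivers planarity; the connectedness $c^{-}=1$, which you correctly observe cannot follow from the Euler-characteristic bookkeeping, is precisely the unproved assertion on which the paper's proof rests.

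Moreover, the gap you flag cannot be closed: that assertion, and with it the lemma as stated, is false without extra hypotheses. Consider $\phi=(x^2+y^2-az^2)(x^2+y^2-bz^2)$ with $0<a<b$. Its gradient vanishes only at the origin: $\partial_x\phi=\partial_y\phi=0$ forces $x=y=0$ or $2(x^2+y^2)=(a+b)z^2$, and in either case $\partial_z\phi=0$ forces $z=0$ (using $a\neq b$). The zero set is a pair of nested cones, meeting a small sphere in four latitude circles, with $\phi$ having signs $+,-,+,-,+$ on the five complementary bands; hence $F^+$ is two polar disks plus an equatorial annulus and $F^-$ is two annuli, giving $\chi(F^+)=2$ and $\chi(F^-)=0$, so $k=+1$ by the cited relation $\chi(F^\pm)=1\pm k$ (a direct degree count confirms this: a regular direction such as $\partial_x$ has three preimages on the small sphere, with signs $-1,+1,+1$). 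All of your identities hold here ($c^+=3$, $c^-=2$, $b=4$, $c^+-c^-=k$, $c^++c^-=b+1$), yet $F^{-\sgn(k)}$ is disconnected and $F^{\sgn(k)}$ is not a union of $|k|+1$ disks, so both conclusions of the lemma fail. No argument can therefore derive $c^-=1$ from the index alone; the lemma holds only under an additional hypothesis (it is fine for Morse germs, and for germs whose nodal link has exactly $|k|+1$ components, as for the singularities in the paper's Figure 1), and any completed proof --- yours or the paper's --- must make such a hypothesis explicit. Note that the downstream use in Theorem \ref{thm:singular_contact_structures_are_ot} survives in this example, since there one only needs the dividing set to have at least two components, which your relations do yield whenever $k\neq 0$.
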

\begin{proof}
  First suppose $k > 0$. Let $L_0 := \phi^{-1}(0)$ be the singular level set. It is a closed and connected subset of $\mbb{R}^3$. Suppose it is compact. Then nearby level sets are all compact. Conversely, if it is not compact then all nearby level sets are not compact. Thus it suffices to consider both $L^\pm$ compact or both noncompact.

  If $L_0$ is not compact then it must be unbounded. It divides $\mbb{R}^3$ into pieces. The `outside' of $L_0$ consists of the level sets with $\phi < 0$, while the `inside' consists of the level sets $\phi > 0$. Each level set on the outside of $L_0$ is also connected. Removing the singular leaf disconnects the set $\phi \leq 0$, and therefore we conclude that the leaf $L^+$ is not connected. If the singular leaf is compact it must be a single point, so the inside level sets are empty.

  Suppose the level sets are compact and that $\delta$ is chosen small enough that $L^+$ does not intersect the boundary of $B(\epsilon)$. Then $F^+ = L^+$ is a compact 2-manifold without boundary, and $\chi(F^+) = 2-2g$, where $g$ is the genus of $F^+$. Since $\chi(F^+) = 1+k$, it follows that $g = (1 - k)/2$. The genus is always a non-negative integer, so we conclude that $k = 1$ and $g=0$, so that $F^+$ is diffeomorphic to a sphere.

  Now suppose $L^\pm$ are not compact. No matter which values of $\delta$ and $\epsilon$ we choose, the sets $L^\pm$ must intersect the sphere. This implies $F^-$ is a connected, compact 2-manifold with boundary, diffeomorphic to a sphere with some number of disks removed, while $F^+$ is a disconnected, compact 2-manifold with boundary. By the classification of 2-manifolds we conclude that $\chi(F^-) = 2-b$, where $b$ is the number of boundary components. It follows that $b = k+1$. $F^+$ is diffeomorphic to the part of the sphere that remains after removing $F^-$, consists of $k+1$ pieces, each diffeomorphic to a closed disk.

  For $k < 0$, the outside is $\phi > 0$ and the inside $\phi < 0$, and the signs in the above argument change accordingly. We have $\chi(F^\pm) = 1 \mp |k|$. The argument for the compact sets is the same, and we find that we must have $k = -1$ and the level sets are again spheres. When the level sets are not compact, $F^+$ is a connected, compact 2-manifold with boundary, diffeomorphic to a sphere with some number of disks removed, while $F^-$ is a disconnected, compact 2-manifold with boundary. Otherwise the same conclusions hold.
 \end{proof}
\noindent Informally, the level sets of a singularity of index $k$ are either spheres, or the singular level set is $|k|+1$ cones adjoined to the singular point, each filled with rounded cones and surrounded by connected surfaces. See Fig. \ref{fig:levelset}.

\begin{figure}[tb]
\centering
\includegraphics[width=1.0\linewidth]{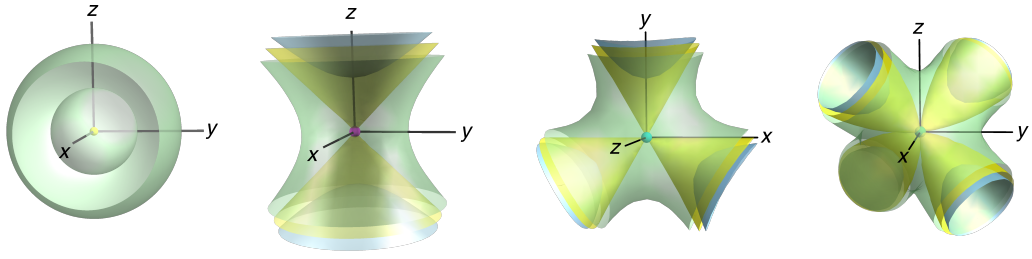}
\caption{Level sets of singularities. From left to right: singularity of Morse index 0 and index $+1$, singularity of Morse index 1 and index $-1$, the singularity $D_4^-$ with index $-2$, and the singularity $T_{4,4,4}$ with index $-3$ (the names of the singularities are those given in Arnold's classification~\cite{arnold88}). All four of these singularities occur in experiments on cholesteric liquid crystal droplets~\cite{posnjak17,pollard19}.}
\label{fig:levelset}
\end{figure}

\subsection{Chiral Singularities}
In this section we prove a theorem characterising chiral singularities. Recall that the Hodge Laplacian on a Riemannian manifold $(M, g)$ is defined by $\Delta_g = d\delta + \delta d$, and that a differential form $\omega$ is harmonic if $\Delta_g \omega = 0$. We say a differential form $\omega$ on $M$ is \textit{intrinsically harmonic} if there exists some metric $g$ on $M$ for which it is harmonic. For closed 1-forms the property of being intrinsically harmonic is equivalent to a certain topological property of the leaves of the foliation it defines.
\begin{theorem} (Calabi's Theorem \cite{calabi69,farber03})
  Let $\omega$ be a closed 1-form with isolated singularities on a closed $n$-manifold. $\omega$ is intrinsically harmonic if and only if for every nonsingular point $p \in M$ there is a closed loop that is transverse to the foliation and passes through $p$.
\end{theorem}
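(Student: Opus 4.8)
The plan is to convert the analytic notion of intrinsic harmonicity into a foliation-theoretic one and then to read off the transverse-loop condition from the theory of foliation cycles. Since $\omega$ is already closed, it is harmonic for a metric $g$ exactly when it is co-closed, $\delta_g \omega = 0$; equivalently $\star_g \omega$ must be a closed $(n-1)$-form. I would therefore search for a closed $(n-1)$-form $\beta$ realised as $\beta = \star_g \omega$ for some metric. By the singular metric-construction Lemma~\ref{lemma:singular_metric_construction}, whose proof applies verbatim to a 1-form and an $(n-1)$-form in any dimension, such a metric exists precisely when $\omega \wedge \beta \ge 0$ with equality only on the singular set $\Sigma$. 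At a nonsingular point the condition $\omega \wedge \beta > 0$ says exactly that $\beta$ restricts to a positive volume form on each leaf of $\mathcal{F} = \ker \omega$, compatibly with the coorientation. Thus the theorem reduces to the statement that $\mathcal{F}$ admits a closed $(n-1)$-form that is positive on its leaves (and degenerates along $\Sigma$ at the rate allowed by Lemma~\ref{lemma:singular_metric_construction}) if and only if every nonsingular point lies on a closed transverse loop.

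For the forward implication I would argue directly. Given a harmonic $\omega$, take the metric-dual field $X = \omega^\sharp$. Away from $\Sigma$ it is positively transverse to $\mathcal{F}$ because $\omega(X) = \norm{\omega}_g^2 > 0$, and co-closedness gives $\mathcal{L}_X \mu = d\,\iota_X \mu = d(\star_g \omega) = 0$, so the flow of $X$ preserves the volume form $\mu$ on the full-measure set $M - \Sigma$. Fixing a nonsingular $p$, Poincar\'e recurrence for this volume-preserving flow produces a point arbitrarily near $p$ that returns close to itself after a long time; the long, positively transverse orbit arc can then be closed up by a short arc inside a single foliation chart, chosen so the whole loop stays positively transverse, and threaded through $p$ itself. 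This yields the required transverse loop.

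The reverse implication is the substantial one, and here I would invoke Sullivan's duality between closed forms positive on a foliation and foliation cycles. By a Hahn--Banach separation argument in the space of $(n-1)$-currents, a closed $(n-1)$-form positive on the leaves of $\mathcal{F}$ exists if and only if no nontrivial \emph{foliation cycle}---a closed current carried by the leaves---is null-homologous. The transverse-loop hypothesis is exactly what rules such cycles out: a transverse closed loop $\gamma$ is a 1-cycle meeting the leaves positively, so it intersects any leaf-carried current positively wherever the two meet, whereas a null-homologous current $Z = \partial W$ satisfies $\gamma \cdot Z = \partial \gamma \cdot W = 0$; a point through which no transverse loop passes is, conversely, trapped in a region supporting a holonomy-invariant transverse measure from which such an obstructing null-homologous cycle can be built. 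Combining Sullivan's criterion with the metric reduction then gives harmonicity.

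The step I expect to be hardest is precisely this last translation: setting up the currents and the Hahn--Banach duality cleanly, constructing the obstructing foliation cycle from a genuinely trapped point (and conversely extracting transverse loops from the absence of cycles), and---specific to our setting---controlling the behaviour at the isolated singularities $\Sigma$, where both $\omega$ and the candidate form $\beta$ are permitted to vanish, so that the degeneracy matches the hypotheses of Lemma~\ref{lemma:singular_metric_construction}. See~\cite{calabi69,farber03} for the original arguments.
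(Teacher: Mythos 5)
The paper does not actually prove this theorem; it quotes it from Calabi and Farber, so your attempt can only be compared with the classical proofs. Your forward direction (harmonic $\Rightarrow$ transitive) is essentially Calabi's own argument: the metric-dual field of a co-closed form is volume-preserving and positively transverse, and recurrence produces closed transversals. Two steps are glossed, though both are repairable by standard foliation arguments: the returning orbit can only be capped by a positively transverse arc if it re-enters the flow box \emph{below} the starting plaque (a return above cannot be closed up directly; one uses forward together with backward recurrence, or concatenates with a leafwise path and pushes it transverse), and getting the loop through $p$ itself, rather than merely near it, uses the lemma that if a closed transversal meets a leaf then every point of that leaf lies on a closed transversal. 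Your reverse direction via Sullivan's Hahn--Banach duality with foliation cycles is a genuinely different route from Calabi's, who constructs the closed $(n-1)$-form by hand as a sum of bump forms supported in product tubes around finitely many closed transversals (foliations defined by closed 1-forms have trivial holonomy, so such tubes exist), glued to explicit Euclidean harmonic models near the zeros; Sullivan's theory is slicker away from $\Sigma$, but it is formulated for nonsingular foliations of compact manifolds, so the compactness of the cone of foliation currents would have to be re-established in the singular setting.

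The genuine gap is your treatment of the zeros, and it is not a technicality that more care would fix. Lemma \ref{lemma:singular_metric_construction} produces a metric only on $M - \Sigma$ whose star operator extends over $\Sigma$ as a positive semi-definite, hence degenerate, operator. By Theorem \ref{thm:chiral_characterisation} this is precisely the paper's \emph{chiral} condition, which is strictly weaker than intrinsic harmonicity: harmonicity requires an honest Riemannian metric across $\Sigma$, which by Theorem \ref{thm:compatible_metric_from_harmonic} is the \emph{Beltrami} condition, and Proposition \ref{prop:corank2_not_beltrami} shows that corank-2 zeros such as $D_4^-$ are chiral but never Beltrami. So even granting the Sullivan step, your reduction lands on the wrong side of the chiral/Beltrami divide: for a transitive closed form containing a corank-2 zero, the implication ``transverse loops $\Rightarrow$ intrinsically harmonic'' simply fails, no matter how the duality argument is organised. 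The classical theorem does not meet this problem because Calabi and Farber assume Morse-type zeros: a nondegenerate zero of non-extremal index can be written in suitable coordinates as the differential of a trace-free quadratic form, i.e. it has a genuine local harmonic model (cf. Proposition \ref{prop:morse_beltrami} and Example \ref{ex:morse_zero}), and these local models are an essential ingredient glued into the global construction. Your proof must therefore either import that nondegeneracy hypothesis and add the local-model patching step at each zero, or else add local intrinsic harmonicity of each zero as an explicit hypothesis, which is how the statement is repaired for degenerate zeros in later work of Volkov; as stated for arbitrary isolated singularities, the result you are trying to prove is not correct, and no global argument can close that gap.
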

\noindent It is clear that being intrinsically harmonic is a property of the foliation and not of the choice of defining 1-form, and therefore is a property of a germ rather than any particular representative of that germ.

Given a singular star operator $\star$, we can still define a codifferential $\delta = \star d \star$ and a Laplacian $\Delta = d\delta+\delta d$. Given a $k$-form $\omega$ whose singular set is the same as that of the singular set of $\star$, we say that \textit{$\omega$ is harmonic with respect to $\star$} if $\Delta \omega = 0$.
\begin{theorem} \label{thm:chiral_characterisation}
  Let $\phi$ be a germ of a real analytic function $\phi : \mbb{R}^3 \to \mbb{R}$. Then $\phi$ is chiral if and only if there exists a metric $g$ on the complement of the zero, such that the star operator of $g$ extends over the singular point as a symmetric, positive semi-definite matrix, and $d\phi$ is harmonic with respect to $g$ on the complement of the zero.
\end{theorem}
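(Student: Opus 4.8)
The plan is to reduce the two-sided statement to a single positivity condition that is manifestly symmetric between chirality and the existence of the metric, and then to read off each direction from it. First I would unwind the definition of chirality. Writing $\eta_t = d\phi + t\nu$ and using $d(d\phi)=0$, one finds $\eta_t\wedge d\eta_t = t\,d\phi\wedge d\nu + t^2\,\nu\wedge d\nu$, so that $\left.\partial_t(\eta_t\wedge d\eta_t)\right|_{t=0} = d\phi\wedge d\nu$. Hence $\phi$ is chiral precisely when there is a germ $\nu\in\mf{m}I_{d\phi}$ with $d\phi\wedge d\nu\ge 0$, vanishing only at the origin. I would then isolate the intermediate condition $(\ast)$: there exists a \emph{closed} $2$-form $\beta$ with $d\phi\wedge\beta\ge 0$, vanishing only at the origin. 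Chirality implies $(\ast)$ immediately by taking $\beta=d\nu$.

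The conceptual core is that $(\ast)$ is exactly the existence of the metric in the statement. Given $(\ast)$, I would apply Lemma \ref{lemma:singular_metric_construction} to the pair $\alpha=d\phi$, $\beta$: since $d\phi\wedge\beta\ge 0$ with mutual zero set $\{0\}$, the lemma produces a metric $g$ on the complement of the origin with $\star_g d\phi = \beta$ whose star operator extends over the origin as a positive semi-definite matrix. Because $\beta$ is closed, $\star_g d\phi$ is closed, which away from the origin is equivalent to $\delta_g d\phi = 0$, so $d\phi$ is harmonic with respect to $g$. Conversely, given such a metric with $d\phi$ harmonic, I would set $\beta := \star_g d\phi$; harmonicity gives $d\beta = 0$, while $d\phi\wedge\beta = \norm{d\phi}_g^2\,\mu \ge 0$, vanishing only where the positive semi-definite star degenerates, i.e. only at the origin. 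This recovers $(\ast)$. Thus chiral $\Rightarrow(\ast)\Rightarrow$ metric, and metric $\Rightarrow(\ast)$, closing three of the four implications.

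It remains to upgrade $(\ast)$ back to genuine chirality, i.e. to produce a primitive $\nu$ of the closed form $\beta=\star_g d\phi$ lying in $\mf{m}I_{d\phi}$, so that by the opening Proposition on $\mf{m}I_{d\phi}$-perturbations the singularity type is unchanged and $\eta=d\phi+\nu$ is a genuine chiral perturbation. Here I would use the explicit shape $\beta=\star_g d\phi$: in coordinates the components of $\beta$ are entries of the star matrix times the components $\partial_i\phi$ of $d\phi$. Since the star operator degenerates on $\ker d\phi$ at the origin, its relevant entries lie in $\mf{m}$, while the $\partial_i\phi$ generate $I_{d\phi}$; hence every coefficient of $\beta$ lies in $\mf{m}I_{d\phi}$. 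Applying the Poincar\'e homotopy operator (integration along radial rays), which multiplies each coefficient by a coordinate function before integrating monomial-by-monomial, then yields a primitive $\nu$ with coefficients still in $\mf{m}\cdot\mf{m}I_{d\phi}\subseteq\mf{m}I_{d\phi}$. Since $d\nu=\beta$ we get $d\phi\wedge d\nu=d\phi\wedge\beta\ge 0$ with equality only at the origin, so $\phi$ is chiral.

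I expect this last step to be the main obstacle, for two reasons. First, the matrix $\sqrt{A^{T}A}$ appearing in the construction of $\star_g$ in Lemma \ref{lemma:singular_metric_construction} is not a priori real analytic, so before speaking of the ideals $I_{d\phi}$ and $\mf{m}$ I would either fix a semiquasihomogeneous normal form for $\phi$ and choose $\beta$ polynomial within the same positivity class, reducing the primitive to a finite polynomial computation, or track orders directly through the quasihomogeneous weights. Second, the harmonicity hypothesis $\Delta_g d\phi=0$ only gives that $\delta_g d\phi$ is constant, and I must argue that the degeneracy of $\star_g$ at the origin forces this constant to vanish, so that $\star_g d\phi$ is genuinely closed rather than merely co-constant. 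Both are order-bookkeeping issues from singularity theory rather than conceptual gaps, but they are where the real work lies.
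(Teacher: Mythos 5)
Your proof is correct and is essentially the paper's own argument: chirality $\Rightarrow$ metric is Lemma \ref{lemma:singular_metric_construction} applied to $\alpha=d\phi$, $\beta=d\nu$, and metric $\Rightarrow$ chirality is the observation that $\star_g d\phi$ is closed, hence exact as a germ with primitive $\nu$, together with $d\phi\wedge d\nu=\norm{d\phi}_g^2\,\mu\ge 0$; your intermediate condition $(\ast)$ just makes this symmetry explicit. The two technical issues you isolate at the end (the membership $\nu\in\mf{m}I_{d\phi}$, and whether $\delta_g d\phi$ vanishes rather than being merely constant) are exactly the points the paper itself passes over, asserting $\nu\in\mf{m}I_{d\phi}$ ``since we are working with germs'' and treating harmonicity of $\phi$ rather than of $d\phi$, so they are gaps in the published argument as much as in yours.
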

\begin{proof}
  First suppose $\phi$ is harmonic with respect to a metric $g$ defined on the complement of the zero. Then $\star_g d\phi$ is closed and, since we are working with germs, exact: $\star_g d\phi = d \nu$ with $\nu \in \mf{m}I_{d\phi}$. Setting $\eta_t = d\phi + t\nu$ for $t > 0$, we have
  \begin{equation}
    \biggl. \frac{\partial (\eta_t \wedge d \eta_t)}{\partial t} \biggr|_{t=0} = d\phi \wedge d \nu = \norm{d\phi}^2_g \mu > 0,
  \end{equation}
  on the complement of the singular point, where $\mu$ is the volume form and $\norm{\cdot}_g$ the norm of the Riemannian metric $g$. We need to extend $\eta$ over the origin. As the star operator extends over the origin, then $\eta$ extends over the origin by defining it to be zero there, and $\eta_t \wedge d\eta_t = 0$ at the origin. This gives a linear perturbation of $d\phi$ into a germ of a singular contact form, so $\phi$ is chiral.

  Now suppose $\phi$ is chiral, so that there exists a 1-form $\nu$ such that $d\phi \wedge d\nu \ge 0$ with respect to the orientation on $\mbb{R}^3$, vanishing only at the origin. We can apply Lemma \ref{lemma:singular_metric_construction} with $\alpha =d\phi$ and $\beta = d\nu$ to obtain a metric $g$ on the complement of the zero such that $\star_g d\phi = d\nu$. The lemma ensures that the star operator can be extended.
 \end{proof}
Theorem \ref{thm:chiral_characterisation} shows that the property of being chiral is nontrival, and that there do exist achiral function germs.
\begin{corollary} \label{cor:no_spheres}
  Germs of functions with spherical level sets cannot be chiral.
\end{corollary}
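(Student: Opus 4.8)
The plan is to contradict the \emph{definition} of chirality directly, by playing the topology of the leaves against Stokes' theorem. First I would pin down what ``spherical level sets'' means: by Lemma~\ref{lemma:level_set_structure}, such a germ has index $k=\pm1$ and, for a regular value $c$ of the appropriate sign, the leaf $L=\phi^{-1}(c)$ inside a small ball $B$ about the origin is an embedded $2$-sphere enclosing the singular point. Now suppose for contradiction that $\phi$ is chiral. Unwinding the definition, there is a germ of a $1$-form $\nu\in\mathfrak{m}I_{d\phi}$ — in particular a polynomial, hence \emph{smooth}, $1$-form on all of $B$ including the origin — for which the $2$-form $\beta:=d\nu$ satisfies $d\phi\wedge\beta\ge0$ with equality only at the singular point. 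Thus $d\phi\wedge\beta>0$ on the punctured ball $B\setminus\{0\}$, and in particular along $L$.

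The key step is to show that $d\phi\wedge\beta>0$ forces the pullback $\beta|_L$ to be a nowhere-vanishing area form on $L$. At a point $p\in L$, I would choose a basis $(e_1,e_2)$ of $T_pL=\ker d\phi|_p$ together with a vector $e_3$ satisfying $d\phi(e_3)>0$, oriented so that $(e_1,e_2,e_3)$ is positively oriented in $M$; expanding the top form gives $(d\phi\wedge\beta)(e_1,e_2,e_3)=d\phi(e_3)\,\beta(e_1,e_2)$, whence $\beta(e_1,e_2)>0$. Hence $\beta|_L$ has constant positive sign with respect to the induced orientation on $L$, so $\int_L\beta|_L>0$.

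This is incompatible with exactness. Since $\beta=d\nu$ and $\nu$ is smooth on the solid region $\Omega\subset B$ that $L$ bounds — the sphere encloses the origin, where $\nu$ is defined — Stokes' theorem gives $\int_L\beta|_L=\int_\Omega d\beta=0$, contradicting the previous paragraph. Therefore no such $\nu$ exists and $\phi$ cannot be chiral. The hard part, and the real content, is recognising that chirality demands a transverse $2$-form $d\phi\wedge(d\nu)>0$ built from an \emph{exact} form $d\nu$, whereas a foliation by spheres can only support a nontrivial class in $H^2(B\setminus\{0\})\cong\mathbb{R}$; everything else is routine linear algebra and an orientation check. I would note in passing that this is the same obstruction one sees through Theorem~\ref{thm:chiral_characterisation} and Calabi's theorem, since any loop transverse to a foliation by spheres would make $\phi$ strictly monotone and hence could not close up.
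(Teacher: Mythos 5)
Your proof is correct, but it takes a genuinely different route from the paper. The paper obtains this corollary as a one-line consequence of Theorem \ref{thm:chiral_characterisation}: chirality forces $d\phi$ to be harmonic with respect to some metric on the punctured neighbourhood, and a function with spherical level sets violates the maximum principle, so no such metric exists. You instead argue straight from the definition: unwinding the perturbation condition gives an exact $2$-form $\beta = d\nu$ with $d\phi \wedge \beta > 0$ off the singular point; your linear-algebra step correctly shows $\beta$ restricts to a positive area form on a spherical leaf $L$, so $\int_L \beta|_L > 0$; and Stokes' theorem forces this integral to vanish. (In fact $\beta|_L = d\left(\nu|_L\right)$ is exact on the closed surface $L$, so the integral is zero by Stokes on $L$ alone -- you do not even need smoothness of $\nu$ across the origin or the solid region it bounds.) The comparison is instructive. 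Your route is self-contained and purely topological: it is the classical compact-leaf obstruction, essentially the same Stokes argument the paper deploys after Proposition \ref{prop:global_perturbation} to recover Novikov's theorem, and it sidesteps the metric constructions entirely. It also exploits a strictly stronger piece of information than the paper's: exactness of $d\nu$, rather than mere closedness of $\star_g d\phi$ (which is all harmonicity gives, and which by itself cannot produce a contradiction on the punctured ball, whose $H^2$ is nontrivial) -- this is precisely why the paper must invoke an analytic fact, the maximum principle, where you can get away with Stokes. What the paper's approach buys is brevity once Theorem \ref{thm:chiral_characterisation} is established, and it exhibits harmonicity as the organizing mechanism of the whole section; your argument has the advantage of being independent of that theorem and of the singular metric machinery behind it.
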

\begin{proof}
  Such functions violate the maximum principle, and as such cannot be harmonic, even on the complement of the singular point.
 \end{proof}
\noindent In particular, singularities of Morse index 0 and 3 are achiral. In real cholesteric materials such defects frustrate the energetically preferred chiral twisting, but nonetheless they still occur, where they sit on the boundary between regions of left-handedness and regions of right-handedness~\cite{pollard19,ackerman2016}.

There are other ways of arriving at Corollary \ref{cor:no_spheres} without using Theorem \ref{thm:chiral_characterisation}. Proposition \ref{prop:compatible_metrics} shows the existence of Riemannian metrics on the complement of the singular point for which the singular contact form is coclosed. If the Reeb-like fields were transverse to a sphere, such a metric could not exist. Interestingly there is a theorem of Eliashberg \& Thurston that allows us to draw this conclusion via a purely topological argument.
\begin{theorem} (Reeb Stability Theorem for Confoliations~\cite{eliashberg97}) \label{thm:reeb_stability}
  Let $\xi$ be a confoliation on $S^2 \times \mbb{R}$ which is transverse to the lines $p \times \mbb{R}$ for any $p \in S^2$ and tangent to $S^2 \times 0$. Then there is a neighbourhood $U$ of $S^2 \times 0$ such that $\xi|_U$ is a foliation diffeomorphic to the fibration of $S^2 \times [-1,1]$ by spheres.
\end{theorem}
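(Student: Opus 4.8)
The plan is to reduce the statement to the classical Reeb stability theorem for foliations, whose only missing ingredient here is \emph{integrability}: once I know that $\xi$ is a genuine foliation in some neighbourhood of $S^2\times 0$, the classical theorem applies verbatim, since $S^2\times 0$ is a closed leaf with $\pi_1(S^2)=0$, and trivial holonomy then produces a product neighbourhood foliated by spheres diffeomorphic to the standard fibration. So the entire content is to upgrade the confoliation to a foliation near the leaf.

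First I would choose adapted coordinates. Transversality to the fibres $p\times\mbb{R}$ lets me normalise a defining form as $\eta = dt + \beta_t$, where $t$ is the $\mbb{R}$-coordinate and $\beta_t$ is a $t$-dependent $1$-form on $S^2$ (with no $dt$ component); tangency to $S^2\times 0$ forces $\beta_0=0$. Writing $d_S$ for the exterior derivative along $S^2$ and $\dot\beta_t=\partial_t\beta_t$, a direct computation gives $\eta\wedge d\eta = dt\wedge\Omega_t$ with $\Omega_t := d_S\beta_t-\beta_t\wedge\dot\beta_t$ a $2$-form on $S^2$, so the confoliation condition becomes $\Omega_t\ge 0$ pointwise (relative to the area orientation), with $\Omega_0=0$. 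The goal is now to show $\Omega_t\equiv 0$ for $t$ near $0$.

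The key tool is the transverse holonomy: since $\xi$ is transverse to the fibres it is an Ehresmann connection on the bundle $S^2\times\mbb{R}\to S^2$, and the horizontal lift of a path $\gamma$ solves the ODE $\dot t=-\beta_t(\dot\gamma)$. For a loop $\gamma=\partial D$ bounding a disk $D$, this defines a holonomy germ $h_{\partial D}$ fixing $0$. The crucial estimate, which I expect to be the main obstacle, is the monotonicity forced by the confoliation condition: expanding the ODE over a small disk yields $h_{\partial D}(t_0)-t_0 = -\int_D\Omega_{t_0}+(\text{higher order})$, so that $\Omega_{t_0}\ge 0$ makes the displacement one-signed. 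Upgrading this from infinitesimal disks to arbitrary ones requires subdividing $D$ and composing the cell holonomies, using that a composition of germs lying on one side of the identity stays on that side; establishing this cleanly—rather than merely its linearisation, which only gives vanishing to first order at the leaf—is the delicate step.

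Granting the monotonicity $h_{\partial D}\le\mathrm{id}$ (the sign fixed by the orientation of $D$), I would then exploit the topology of the sphere. Any embedded loop $\gamma$ splits $S^2$ into two disks $D_1,D_2$ with $\partial D_1=\gamma=(\partial D_2)^{-1}$, so the same inequality applied to both gives $h_\gamma\le\mathrm{id}$ together with $h_\gamma^{-1}=h_{\partial D_2}\le\mathrm{id}$, whence $h_\gamma=\mathrm{id}$. Thus every loop has trivial holonomy at every height near $0$, so the connection $\xi$ is flat; by Frobenius this is exactly integrability, giving $\Omega_t\equiv 0$ and $\eta\wedge d\eta\equiv 0$ on a neighbourhood. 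With integrability in hand, classical Reeb stability for the resulting foliation—trivial holonomy along $\pi_1(S^2)=0$—provides the product neighbourhood $U\cong S^2\times(-1,1)$ whose leaves are the spheres $S^2\times\{s\}$, diffeomorphic to the standard spherical fibration, completing the argument.
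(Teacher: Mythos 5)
First, a point of comparison: the paper does not prove this theorem at all — it is quoted from Eliashberg--Thurston~\cite{eliashberg97} — so the only meaningful benchmark is their original argument. Your architecture is exactly theirs: normalise $\eta = dt + \beta_t$ with $\beta_0 = 0$, identify the curvature of the induced Ehresmann connection with $\Omega_t = d_S\beta_t - \beta_t\wedge\dot\beta_t$ so that the confoliation condition becomes one-signedness of the curvature, deduce that the holonomy around the boundary of an embedded disk lies on one side of the identity, use the fact that on $S^2$ every embedded loop bounds disks on both sides to force $h_\gamma = \mathrm{id}$, conclude integrability, and finish with classical Reeb stability. Your endgame is correct (and in fact, once all holonomy is trivial, the horizontal sections through points $(p_0,t_0)$ near the zero level are already global spherical leaves, so the product foliation comes out directly without re-invoking Reeb stability).

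The gap is the step you flagged and then granted: the monotonicity lemma $h_{\partial D} \le \mathrm{id}$, which is the entire analytic content of the theorem. Your proposed mechanism — subdivide $D$ into small cells, observe each cell holonomy is one-sided, compose — cannot be completed as stated. The composition half is fine: conjugation by increasing fibre transports and composition of one-sided germs both preserve one-sidedness. But the cell-level claim is not established by the infinitesimal estimate $h(t_0) - t_0 = -\int \Omega_{t_0} + (\text{higher order})$: this gives exact one-sidedness only where the curvature term dominates the error, and it need not (the cell may lie in a region where $\Omega$ vanishes identically or to high order, where the displacement must be shown to be exactly zero or one-signed, not merely small). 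So one-sidedness for small cells is literally the same unproven statement one scale down — the subdivision has no base case. The standard repair, and what makes the Eliashberg--Thurston proof work, is an exact rather than infinitesimal argument: sweep $D$ by a one-parameter family of loops $\gamma_r$ based at a point of $\partial D$, and differentiate the holonomy $h_r(t)$ in $r$. The variation-of-holonomy formula expresses $\partial_r h_r(t)$ as an integral along $\gamma_r$ of the curvature $\Omega$ conjugated through partial holonomies; since those transports are orientation-preserving and $\Omega \ge 0$, the derivative has a fixed sign for every $r$ and $t$, and integrating from $r=0$ (trivial holonomy) to $r=1$ gives $h_{\partial D} \le \mathrm{id}$ exactly, with no error term to control. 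With that lemma in place, the remainder of your argument goes through as written.
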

\noindent This theorem also has consequences for the behaviour of singular contact structures near a boundary. Many experiments are performed on spherical cholesteric droplets. If normal anchoring is imposed on the surface of the droplet, the director is orthogonal to the boundary, and consequently Theorem \ref{thm:reeb_stability} applies: the material cannot assume a single handedness throughout the interior of the droplet. Physically, the material meets the topological requirements of Theorem \ref{thm:reeb_stability} by having regions of handedness reversal localised in a neighbourhood of the droplet boundary, while in the interior it is singular contact. As these regions are energetically disfavoured, there is a strong tendancy to minimise their size, which in turn leads to stabilisiation of exotic structures such as defects described by degenerate singularities~\cite{pollard19}.

\subsection{Beltrami Singularities}
Theorem \ref{thm:chiral_characterisation} gives necessary and sufficient conditions for a singularity to occur in a singular contact structure. As we remarked in \S\ref{sec:singular_contact_structures} these conditions are necessary, but not sufficient, for a singularity to occur in a singular Beltrami field. In this section we will refine Theorem \ref{thm:chiral_characterisation} to give a characterisation of Beltrami fields. Following this, we will given an example of a singuarity that is chiral but not Beltrami, showing the distinction is meaningful. First however, we present some examples of singularities that are Beltrami.
\begin{example} \label{ex:morse_zero}
  The Morse index 1 singularity, defined by the homogeneous polynomial $\phi = x^2/2 + y^2/2 - z^2$, is harmonic with respect to the Euclidean metric. Following the approach in Theorem \ref{thm:chiral_characterisation}, we obtain a singular contact form with this type of singularity,
  \begin{equation}
    \eta_t = (x+tyz)dx + (y-txz)dy - 2zdz.
  \end{equation}
  For any $t \neq 0$ this is a germ of singular contact form with a Morse index 1 zero. $-\eta_t$ gives the form for a Morse index 2 zero. We see that the order of the contact jet is 2, one higher than the order of the sufficient jet.

  Let $t > 0$, so $\eta_t$ is a positive singular contact form. We can find a Riemannian metric $g_t$ such that $\star_{g_t} \eta_t = d\eta_t$, for example the one defined by the matrix
  \begin{equation}
    g_t = t^{2/5}\begin{bmatrix}1 & 0 &\frac{t}{2}y \\ 0 & 1 & -\frac{t}{2}x \\ \frac{t}{2}y & -\frac{t}{2}x & 1 + \frac{t^2}{4}(x^2+y^2) \end{bmatrix},
  \end{equation}
  with respect to the coordinate basis on $\mbb{R}^3$. Thus the chiral Morse singularities are also Beltrami.
\end{example}
\noindent From this example, we conclude that in the generic case there is no distinction between chiral and Beltrami singularities, and therefore distinctions can only arise when one allows degenerate singularities.
\begin{proposition} \label{prop:morse_beltrami}
  A Morse singularity is Beltrami if and only if it is chiral.
\end{proposition}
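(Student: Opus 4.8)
The plan is to prove the two directions separately, leaning on material already assembled. The forward implication, that a Beltrami Morse singularity is chiral, needs no work: it was already observed that Beltrami singularities form a subset of chiral singularities, and this applies in particular to Morse singularities. All the content of the proposition therefore lies in the converse, that a chiral Morse singularity is Beltrami.

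For the converse I would first reduce to normal forms. By the Morse lemma a Morse singularity is equivalent to $\phi = \pm x^2 \pm y^2 \pm z^2$, classified up to sign and permutation by the Morse index $0,1,2,3$. Since both chirality and the Beltrami property depend only on the equivalence class of the gradient singularity -- chirality being characterised through the foliation of level sets by Theorem \ref{thm:chiral_characterisation}, and both conditions transforming naturally under the diffeomorphisms defining the equivalence -- it suffices to settle the four normal forms. The indices $0$ and $3$ correspond to $\pm(x^2+y^2+z^2)$, whose level sets are spheres; by Corollary \ref{cor:no_spheres} these are achiral, so the hypothesis of the converse is vacuous for them. The remaining cases are Morse index $1$ and $2$.

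It then remains to exhibit, for index $1$ and index $2$, a genuine everywhere-nondegenerate Riemannian metric realising the Beltrami condition $\star_g \eta = d\eta$. This is exactly what Example \ref{ex:morse_zero} provides: the form $\eta_t = (x+tyz)\,dx + (y-txz)\,dy - 2z\,dz$ is a singular contact form with a Morse index $1$ zero, and the explicitly displayed metric $g_t$ satisfies $\star_{g_t}\eta_t = d\eta_t$; the index $2$ case follows by replacing $\eta_t$ with $-\eta_t$. As the index $1$ and $2$ singularities are the only chiral Morse singularities, and both are Beltrami, every chiral Morse singularity is Beltrami, completing the converse.

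The step I expect to be the real obstacle is verifying that the metric $g_t$ of Example \ref{ex:morse_zero} is an honest Riemannian metric \emph{at the origin}, rather than one whose star operator merely degenerates there -- this being precisely the gap between the chiral criterion of Theorem \ref{thm:chiral_characterisation}, which permits a degenerate extension, and the definition of a Beltrami singularity, which demands a nondegenerate metric. Evaluating $g_t$ at the singular point shows the off-diagonal entries vanish and the matrix reduces to $t^{2/5}\,\mathrm{diag}(1,1,1)$, which is positive definite; hence $g_t$ is nondegenerate across the origin and fulfils the stronger requirement. Confirming that this gap closes on the positive side for the index $1$ and $2$ forms is what makes the proposition true.
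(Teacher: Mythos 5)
Your proof is correct and takes essentially the same approach as the paper: the paper derives this proposition directly from Example \ref{ex:morse_zero} (the explicit form $\eta_t$ and metric $g_t$ for Morse index 1, with $-\eta_t$ handling index 2) together with Corollary \ref{cor:no_spheres} eliminating indices 0 and 3. Your explicit check that $g_t$ is positive definite at the origin, rather than merely a degenerate star operator, makes precise a point the paper leaves implicit.
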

Even for applications to fluid dynamics it is necessary to consider certain degenerate singularities, as their unfoldings, decompositions of the singularity into multiple simpler singularities, describe how singularities are formed from a previously nonsingular field via a homotopy. This is seen in a famous class of Beltrami fields.
\begin{example}
  An important class of Beltrami fields are Arnold--Beltrami--Childress (ABC) fields on $\mbb{T}^3$, which are eigenvalues of the curl operator (in the Euclidean metric on $\mbb{T}^3$) with eigenvalue 1. They are defined by
  \begin{multline}
    V_{ABC} = (A\sin z + C\cos y)\partial_x + (B\sin x +A\cos z) \partial_y + (C\sin y+B\cos x)\partial_z,
  \end{multline}
  for constants $A, B, C \ge 0$. Without loss of generality we can assume $1=A \ge B \ge C \ge 0$, and then the vector field is nonsingular if and only if $B^2+C^2 < 1$~\cite{dombre86}. When $B^2 + C^2 = 1$, the fields contain at least one degenerate singularity of index 0. In the notation of Arnold~\cite{arnold88} these singularities are of type $A_2$. When $B^2 + C^2 > 1$ there are singularities of Morse type which have either Morse index 1 or Morse index 2. By performing a Taylor series expansion around the zeros, we can check that these have exactly the local structure given in Example \ref{ex:morse_zero}; the calculation is straightforward and is carried out in~\cite{dombre86} for the case $A=B=C=1$.

  This example shows that the singularity germ $A_2$ defined by a function of the form $\phi = x^3 + y^2 - z^2$ is also Beltrami. More explicitly, for any $a \in \mbb{R}$,
  \begin{equation}
    \phi = \frac{1}{3}x^3 - (1-a)xy^2 - axz^2 + \frac{1}{2}y^2 - \frac{1}{2}z^2
  \end{equation}
  is a function defining the $A_2$ that is harmonic with respect to th Euclidean metric.

\end{example}
\noindent In order to study Beltrami singularities, let us show how to produce a Riemannian metric compatible with a singular contact structure from a Riemannian metric for which the underlying singularity is harmonic. This is a refinement of Theorem \ref{thm:chiral_characterisation}.
\begin{theorem} \label{thm:compatible_metric_from_harmonic}
  A singularity $\phi$ is intrinsically harmonic with respect to some Riemannian metric if and only if it is Beltrami. Moreover, if $\phi$ is harmonic with respect to $g$, then there exists a germ of a analytic singular contact form $\eta = d\phi + t\nu$, with $\nu \in I_{d\phi}$ and $\star_{t^{-2}g} \eta = d\eta$, for $t$ sufficiently small.
\end{theorem}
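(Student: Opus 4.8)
The plan is to read the theorem as a non-degenerate refinement of the chiral characterisation (Theorem \ref{thm:chiral_characterisation}). There, $\phi$ is chiral exactly when there is a metric on the complement of the zero whose star operator extends over the singular point as a positive \emph{semi}-definite matrix and for which $d\phi$ is harmonic. A Beltrami singularity is the same data with the star operator extending as a positive-\emph{definite} matrix, i.e.\ as a genuine Riemannian metric across the origin; and $d\phi$ being harmonic for a genuine metric is precisely the assertion that it is intrinsically harmonic. So the equivalence is obtained by rerunning the proof of Theorem \ref{thm:chiral_characterisation} while demanding that the metric witnessing harmonicity be non-degenerate at the origin, and the entire content lies in controlling that non-degeneracy.

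For the implication ``intrinsically harmonic $\Rightarrow$ Beltrami'' together with the explicit construction, I would start from a genuine metric $g$ for which $\phi$ is harmonic. Then $\star_g d\phi$ is closed, hence (for germs) exact, $\star_g d\phi = d\nu$ with $\nu \in I_{d\phi}$, the ideal membership following from the order comparison used in Proposition \ref{prop:only_gradients}. Put $\eta_t = d\phi + t\nu$, so that $d\eta_t = t\,\star_g d\phi$ and $\eta_t \wedge d\eta_t = t\,\norm{d\phi}^2_g\,\mu + O(t^2)$; for $t$ small the higher-order term cannot reverse the sign near the origin, so $\eta_t$ is a singular contact form. This is the chiral half of Theorem \ref{thm:chiral_characterisation} verbatim; the new work is to produce a compatible metric for $\eta_t$ that is genuine at the origin.

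Here is the key idea. I would run the polar-decomposition construction of Lemma \ref{lemma:singular_metric_construction} on $\alpha = \eta_t$, $\beta = d\eta_t = t\,\star_g d\phi$, but make the crucial choice of taking the auxiliary metric $h$ in that construction to be the harmonic metric $g$ itself. Since $\star_g d\phi = \iota_{\nabla\phi}\mu$, its restriction to the contact planes $\ker\eta_t$ (which to leading order are $\ker d\phi = (\nabla\phi)^\perp$) equals $\norm{d\phi}_g$ times the $g$-area form of those planes. In a $g$-orthonormal frame the defining matrix $A_{ij} = \beta(X_i,X_j)$ is therefore $t\,\norm{d\phi}_g$ times a fixed rotation, so its polar factor $\sqrt{A^{\mathsf T}A} = t\,\norm{d\phi}_g\,I$ is \emph{isotropic}. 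The factors of $\norm{d\phi}_g$, which vanish at the origin, are precisely what the determinant normalisation $S \mapsto |S|^{-1/3}S$ removes, so the resulting metric extends over the origin as a positive-definite matrix rather than degenerating. Tracking the scalar that survives this normalisation identifies the extended metric with the rescaling $t^{-2}g$ recorded in the statement, up to terms lying in $\mf{m}I_{d\phi}$ that are negligible as a germ. Contrasting this with the merely chiral case makes the dichotomy transparent: if $d\phi$ is only harmonic for a metric that degenerates at the origin, the same area-form computation makes $\sqrt{A^{\mathsf T}A}$ isotropic with respect to a degenerate metric, and the normalisation returns a degenerate metric --- exactly the failure of the Beltrami condition.

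The converse, ``Beltrami $\Rightarrow$ intrinsically harmonic,'' is the quick direction. From $\star_G\eta = d\eta$ for a genuine metric $G$ with $\eta = d\phi + \nu$ one applies $d$ to get $d\star_G\eta = 0$, so $\eta$ is $G$-coclosed; comparing lowest-order terms (where $G \approx G(0)$ is constant and $\nu$ is strictly higher order) shows $\star_{G(0)}d\phi$ is closed to leading order, so the leading part of $d\phi$ is harmonic for the genuine constant metric $G(0)$. Since intrinsic harmonicity is a property of the germ's foliation, equivalent by Calabi's theorem to the existence of transverse closed loops, this suffices. The main obstacle throughout is the positive-definiteness claim in the third paragraph: the real work is to check that the limit of the normalised polar matrix at the origin is non-degenerate in all three directions --- in particular that the Reeb direction degenerates at the same rate as the contact planes --- and that this rate is governed exactly by the isotropy coming from harmonicity with respect to the \emph{genuine} metric $g$. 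Verifying this, and bookkeeping the conformal factor so that it matches $t^{-2}g$, is where the hypothesis that $g$ extends non-degenerately (rather than merely as a positive semi-definite matrix) is indispensable.
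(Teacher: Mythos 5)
Your proposal takes a genuinely different route from the paper in both directions, and each divergence opens a real gap. In the forward direction (``intrinsically harmonic $\Rightarrow$ Beltrami''), you keep the strictly linear perturbation $\eta_t = d\phi + t\nu$ with $\star_g d\phi = d\nu$, and try to manufacture a compatible metric by rerunning the polar decomposition of Lemma \ref{lemma:singular_metric_construction} with auxiliary metric $g$. But the exact identity demanded by the theorem fails for this $\eta_t$: with the paper's scaling convention, $\star_{t^{-2}g}\eta_t = t\star_g(d\phi + t\nu) = t\,d\nu + t^2\star_g\nu = d\eta_t + t^2\star_g\nu$, and $\star_g\nu \neq 0$, so no constant conformal rescaling of $g$ is compatible with the linear perturbation; this is also why the compatible metric exhibited in Example \ref{ex:morse_zero} has off-diagonal entries and is not conformal to the Euclidean metric. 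The definition of a Beltrami singularity requires the \emph{exact} equality $\star_G\eta = d\eta$, so your identification of the polar metric with $t^{-2}g$ ``up to terms negligible as a germ'' cannot be correct, and the positive-definiteness of the normalised polar matrix --- which you yourself flag as ``the real work'' --- is deferred rather than proved. The paper's key idea, absent from your proposal, is to correct the \emph{form} rather than the metric: iterate the Hodge decomposition to produce 1-forms $\nu_j$ with $\star_g\nu_j = d\nu_{j+1}$, and set $\eta = d\phi + \sum_{j\ge 1}t^j\nu_j$. This series converges for small $t$ and satisfies $d\eta = t\star_g\eta$ exactly, so the compatible metric is just the constant rescaling of $g$ and no extension problem for a degenerating star operator ever arises. (The $\nu \in I_{d\phi}$ of the statement is the whole tail $\sum_j t^{j-1}\nu_j$, not the single first-order term.)

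The converse direction has a sharper logical problem. From $d\star_G\eta = 0$ you extract only that the lowest-degree part of $d\star_{G(0)}d\phi$ vanishes, i.e.\ that the \emph{leading part} of $d\phi$ is harmonic for the constant metric $G(0)$; you then assert this suffices because intrinsic harmonicity is a property of the germ. It does not suffice: the germ's foliation is determined by the full $d\phi$ (up to its sufficient jet), not by its leading part, and harmonicity of the leading part with respect to a flat metric is exactly the kind of leading-order necessary condition the paper records after Proposition \ref{prop:corank2_not_beltrami} --- and whose sufficiency the paper explicitly states is \emph{unknown} (``We do not know if this condition is sufficient''). So your converse silently assumes a statement the theorem is not entitled to. The paper instead runs an exact argument: Hodge-decompose $\eta = d\psi + \delta\alpha + \gamma$ with respect to the compatible metric $g$, use $\star_g\eta = d\eta$ together with $d\star_g\delta\alpha = 0$ and $d\star_g\gamma = 0$ to conclude $d\star_g d\psi = 0$ exactly, and then note that $\psi$ and $\phi$ are equivalent germs (their difference is higher order), so $\phi$ is intrinsically harmonic with respect to the genuine metric $g$. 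Replacing your leading-order comparison with this exact decomposition is necessary, not optional.
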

\begin{proof}
  Recall that rescaling a Riemannian metric $g$ by a positive constant $\lambda$ rescales the norm by a factor of $\lambda$, the star operator by a factor of $\lambda^{-1/2}$, and the volume form by a factor of $\lambda^{3/2}$.

  First, suppose $\phi$ is Beltrami. Since $\phi$ is chiral, there is a germ of a singular star operator $\star$ such that $\star d\phi = d\nu$, for some $\nu$. Furthermore, as $\phi$ is Beltrami there is a germ of a Riemannian metric $g$ with volume form $\mu$ such that $\eta = d\phi + t\nu$ is singular contact and $\star_g \eta = d\eta = td\nu$; note that we need not have $\star_g = \star$.

  By restricting to a closed ball around the origin, we may write $\eta = d\psi + \delta \alpha + \gamma$ via the Hodge decomposition theorem applied to $g$, where $\psi$ is a function, $\alpha$ a 2-form, and $\gamma$ a harmonic 1-form. Then $\nu = d(\psi-\phi) + \delta_\alpha + \gamma$, from which we conclude that $t d\star_g \nu = d\star_g d\psi- d\star_g d\phi$. We also compute that $\star_g d\phi = t(d\nu - \star_g \nu)$, from which we conclude that $d\star_g d\psi = 0$, so $\psi$ is harmonic with respect to the Riemannian metric $g$. However, $\psi$ and $\phi$ are evidently equivalent as germs of functions, and since being intrinsically harmonic is a property of the equivlanence class and not its representatives, we conclude that $\phi$ is intrinsically harmonic.

  Conversely, suppose $g$ is a germ of a Riemannian metric for which $\phi$ is harmonic. Restricting to a closed neighbourhood around the origin, we may again invoke the Hodge decomposition theorem and choose a coclosed $\nu_1$ such that $\star_g d\phi = d\nu_1$. Since $\nu_1$ is coclosed, there exists $\nu_2$ such that $\star_g \nu_1 = d\nu_2$. Iterating this process, we obtain a family of 1-forms $\nu_j$ with $\star_g \nu_j = d\nu_{j+1}$. Now define a 1-form via a formal power series in a parameter $t$,
  \begin{equation}
    \eta = d\phi + \sum_{j=1}^\infty t^j \nu_j.
  \end{equation}
  On our closed ball we can assume there is a constant $M$ bounding the components $\nu_j$ for each $j$; that we can choose a single constant independent of $j$ follows from the relations between the $\nu_j$. Then for $t$ sufficiently small we can conclude the sum converges absolutely and hence converges, so $\eta$ is well-defined. Then
  \begin{equation}
    d\eta = \sum_{j=1}^\infty t^j d\nu_j = t \star d\phi + t^2 \star d\nu_1 + t^3 \star d\nu_2 + \dots = t\star_g \eta.
  \end{equation}
  Thus, $\star_{t^{-2}g} \eta = d\eta$, implying that $\eta$ is singular contact and that $\phi$ is Beltrami.
 \end{proof}
\noindent The proof of this theorem, applied to the particular case where the singularity is harmonic with respect to the Euclidean metric, shows us how to construct a vector field containing this singularity that is a Beltrami field on Euclidean space.
\begin{corollary}
  If a singularity has a representative $\phi$ that is harmonic with respect to the Euclidean metric, then $\phi$ can occur as a singularity of a Beltrami field on Euclidean $\mbb{R}^3$. It follows that a singularity of any index can occur in a Beltrami field in Euclidean space.
\end{corollary}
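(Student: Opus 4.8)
The plan is to treat the two sentences of the corollary separately: the first is a direct specialisation of Theorem \ref{thm:compatible_metric_from_harmonic} together with the conformal behaviour of the Hodge star, and the second is an existence statement about harmonic representatives that feeds back into the first. For the first claim, suppose $\phi$ is harmonic with respect to the Euclidean metric $\delta$ on $\mbb{R}^3$. I would apply Theorem \ref{thm:compatible_metric_from_harmonic} with $g = \delta$ to produce, for $t$ small, an analytic singular contact form $\eta = d\phi + \cdots$ satisfying $\star_{t^{-2}\delta}\,\eta = d\eta$. The key observation is that $t^{-2}\delta$ is a \emph{constant} rescaling of $\delta$, so the two star operators differ only by a scalar: on $1$-forms one has $\star_{t^{-2}\delta} = t^{-1}\star_\delta$. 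Substituting this and then applying $\star_\delta$ (using $\star_\delta^2 = \mathrm{id}$ on $1$-forms in dimension $3$) converts the relation into $\star_\delta\, d\eta = t^{-1}\eta$, i.e. the Euclidean-dual vector field $X = \eta^\sharp$ satisfies $\mathrm{curl}\,X = t^{-1}X$ with respect to the standard metric. Hence $X$ is a genuine Beltrami field on $\mbb{R}^3$; its zero set is that of $\eta$, namely the origin, and since the sufficient jet of $\eta$ is $d\phi$ the singularity type there is exactly $\phi$. (The defining series converges only on a neighbourhood of the origin for $t$ small, so what is produced is a local Beltrami field realising the germ $\phi$, which is all that is needed to say the singularity occurs.)

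For the second claim I would argue that, by the first claim together with the symmetry $\phi \mapsto -\phi$ (which negates the index, since the antipodal map on $S^2$ has degree $-1$), it suffices to exhibit for every $k \le 0$ a polynomial $\phi$ that is harmonic for $\delta$ and whose gradient has an isolated zero of index $k$. Here I would use the level-set description of the index: specialising the relation $\chi(F^\pm) = 1 \pm k$ recorded before Lemma \ref{lemma:level_set_structure} to a homogeneous $\phi$, where $F^+$ is homotopy equivalent to $N^+ := \{\phi > 0\}\cap S^2$, gives $k = \chi(N^+) - 1$. The anchors are already in hand --- the Morse saddle $x^2+y^2-2z^2$ gives $k=-1$, the $A_2$ germ gives $k=0$, and the paper's $D_4^-$ and $T_{4,4,4}$ give $k=-2,-3$ --- and the task is to continue this to all $k$. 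I would do so by constructing spherical harmonics whose negative nodal region consists of $|k|+1$ disjoint caps with connected positive complement, so that $N^+$ is a sphere with $|k|+1$ disks removed and $\chi(N^+) = 2 - (|k|+1) = 1+k$; a natural construction is to superpose zonal harmonics peaked at scattered points of $S^2$ to force the requisite sign pattern.

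The main obstacle is precisely this last construction, because two conditions must hold simultaneously. First, the critical point of $\nabla\phi$ must remain \emph{isolated}, which for homogeneous $\phi$ is equivalent to the spherical nodal set $\{\phi=0\}\cap S^2$ being a smooth embedded curve without crossings (so that $0$ is a regular value of $\phi|_{S^2}$, forcing the radial and tangential parts of $\nabla\phi$ not to vanish simultaneously). Second, the positive nodal region must have the prescribed Euler characteristic $1+k$. These pull against each other: the most transparent high-index candidates, the tesseral or checkerboard harmonics, have nodal crossings and hence non-isolated critical points, and so must be avoided or perturbed away. I expect the cleanest route is to fix the desired nodal topology first and then invoke a transversality argument to smooth the nodal set while preserving the sign pattern, or else to quote the relevant family directly from Arnold's classification \cite{arnold88}; establishing that a harmonic representative with isolated gradient zero exists for every integer index is where the real work lies, whereas the passage from such a representative to an actual Euclidean Beltrami field is the routine first step above.
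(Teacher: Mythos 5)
Your proposal is correct and follows the paper's own route: the first claim is the same specialisation of Theorem \ref{thm:compatible_metric_from_harmonic} to the Euclidean metric, with your conformal-scaling computation (giving $\mathrm{curl}\,X = t^{-1}X$, which is Beltrami since only parallelism of $X$ and its curl is required) being an equivalent, slightly more explicit, version of the paper's remark that ``a change of coordinates allows us to assume $t=1$''. For the second claim the paper's entire argument is the single sentence ``we observe that we can produce a singularity of any index using spherical harmonics'', so the construction you flag as ``where the real work lies'' --- exhibiting, for every index, a harmonic polynomial whose gradient has an isolated zero (your reduction $k=\chi(N^+)-1$ and your criterion that isolation is equivalent to $0$ being a regular value of $\phi|_{S^2}$ are both correct) --- is left unproved in the paper as well, and your sketch already exceeds the paper's level of detail on this point.
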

\begin{proof}
  The first claim is immediate from Theorem \ref{thm:compatible_metric_from_harmonic}: $\nabla \phi$ can be perturbed into a vector field that is Beltrami with respect to the Euclidean metric rescaled by a factor of $t^{-2}$, but a change of coordinates allows us to assume $t=1$. For the second claim, we observe that we can produce a singularity of any index using spherical harmonics.
\end{proof}
% \noindent Given a sufficient jet $\phi$ of a chiral singularity, it's clear from the construction of Lemma \ref{lemma:singular_metric_construction} that we can choose the star operator for which it is harmonic so that the matrix defining it respect to the coordinate system has polynomial components. The \textit{order} of the star operator is the highest degree of any monomial in this matrix. Let $k$ denote the order of the sufficient jet $\phi$, $c$ the order of the contact jet, and $d$ the minimum order of any star operator for which $\phi$ is harmonic. The construction in Theorem \ref{thm:chiral_characterisation} implies that $c = k+ d + 1$. In particular,
% \begin{proposition}
%   If $c = k+1$, then there exists a representative $\phi$ of the germ which is harmonic with respect to the Euclidean metric, and $\phi$ is Beltrami. Conversely, if $\phi$ is a representative of a singularity that is harmonic with respect to the Euclidean metric, then $c=k+1$ and $\phi$ is Beltrami.
% \end{proposition}
\noindent It can be checked that certain chiral functions are harmonic with respect to some star operator that degenerates at the singular point, but not with respect to any genuine Riemannian metric. Thus, not all chiral singularities are Beltrami. We conclude by giving examples of such degenerate singularities. Define the \textit{corank} of a singularity to be the corank of the Hessian matrix of any representative function $\phi$ evaluated at the origin, that is, the number of zero eigenvalues; this does not depend on the choice of representative.
\begin{proposition} \label{prop:corank2_not_beltrami}
  Let $\phi$ be a germ of a singularity with corank 2. Then $\phi$ is not Beltrami.
\end{proposition}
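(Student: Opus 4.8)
The plan is to argue by contradiction through the harmonic characterisation of Beltrami singularities supplied by Theorem~\ref{thm:compatible_metric_from_harmonic}: a singularity $\phi$ is Beltrami if and only if it is intrinsically harmonic with respect to some \emph{genuine} Riemannian metric germ $g$ at the origin. It therefore suffices to show that no positive-definite Riemannian metric can make $\phi$ harmonic. I would assume for contradiction that such a $g$ exists, with $\Delta_g\phi=0$ on a punctured neighbourhood of the origin. Since $\phi$ is smooth and $g$ is a smooth, positive-definite metric on a full neighbourhood, $\Delta_g\phi$ is continuous and so vanishes at the origin as well.

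The key computation is to evaluate the Hodge Laplacian $\Delta_g\phi=\delta_g\,d\phi$ at the singular point, which is a critical point of $\phi$. In local coordinates $\Delta_g\phi=-|g|^{-1/2}\partial_i\bigl(|g|^{1/2}g^{ij}\partial_j\phi\bigr)$, and every term carrying a first derivative $\partial_j\phi$ drops out at the origin because $d\phi$ vanishes there. This leaves, up to the usual sign convention, $\Delta_g\phi(0)=\operatorname{tr}\bigl(g^{-1}(0)H\bigr)$, where $H=\operatorname{Hess}_0\phi$ is the Euclidean Hessian of $\phi$ at the origin. Harmonicity thus forces $\operatorname{tr}\bigl(g^{-1}(0)H\bigr)=0$.

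I would then exploit the corank hypothesis to derive a contradiction. Corank $2$ means $H$ has exactly two zero eigenvalues, hence rank $1$; a symmetric matrix of rank $1$ has a single nonzero eigenvalue and is therefore, up to sign, a nonzero positive semi-definite matrix. Writing $g^{-1}(0)=B^2$ for a symmetric positive-definite $B$, one has $\operatorname{tr}\bigl(g^{-1}(0)H\bigr)=\operatorname{tr}(BHB)$, and $BHB$ is a nonzero semi-definite matrix, since congruence by the invertible $B$ preserves semi-definiteness and nondegeneracy. Its trace is consequently nonzero, contradicting the harmonicity requirement. Hence no genuine Riemannian metric renders $\phi$ harmonic, and $\phi$ is not Beltrami.

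The decisive point, and the place where the Beltrami condition genuinely differs from mere chirality, is that $g$ must be an honest Riemannian metric, so that $g^{-1}(0)$ is positive-definite; it is exactly this positivity that forces the trace to be nonzero. By contrast, in the chiral characterisation of Theorem~\ref{thm:chiral_characterisation} the star operator is only required to extend as a positive \emph{semi}-definite matrix and may degenerate at the origin, so $g^{-1}(0)$ can acquire a kernel aligned with the nonzero eigendirection of $H$ and allow the trace to vanish. The main things to verify carefully are therefore the reduction to the trace identity at the critical point (checking that all first-derivative terms really do drop out) and the observation that the rank of $H$ is the complement of the corank, a germ invariant independent of the chosen representative of $\phi$.
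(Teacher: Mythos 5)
Your proof is correct and follows essentially the same route as the paper's: argue by contradiction via Theorem~\ref{thm:compatible_metric_from_harmonic}, evaluate the Laplacian at the critical point to get the condition $\operatorname{tr}\bigl(g^{-1}(0)\operatorname{Hess}_0\phi\bigr)=0$, and use that a corank-$2$ Hessian has rank $1$, so this weighted trace cannot vanish against a positive-definite metric. The paper packages the same obstruction slightly differently --- splitting the star operator into its constant part (normalised to the Euclidean one by a coordinate change) plus higher-order terms, so that the nonvanishing constant term of $d\star_g d\phi$ is the trace of the Hessian --- but the underlying argument is identical, including your closing observation that degeneracy of the extended star operator is exactly what chiral-but-not-Beltrami singularities exploit.
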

\begin{proof}
  The condition on the corank of the singularity implies that, no matter what representative of the germ we choose, the trace of the Hessian matrix will be nonzero at the origin.

  Suppose for a contradiction that $\phi$ is Beltrami. Then, by Theorem \ref{thm:compatible_metric_from_harmonic}, there exists a germ of a Riemannian metric $g$ for which $\phi$ is harmonic. We can decompose the star operator of $g$ as $\star_g = \star_0 +\star_1$, where $\star_0$ is a symmetric and positive-definite bilinear form (the `constant part' of the star operator) and $\star_1$ is singular. Since $\star_0$ is constant it is the star operator of a germ of a flat Riemannian metric on $\mbb{R}^3$, and we can find local coordinates in which the matrix for $\star_0$ has entries $(\star_0)_{ij} = \delta_{ij}$.

  Now consider $d\star_g d\phi = d\star_0d\phi + d\star_1 d\phi$. As the corank of $\phi$ is 2, the first term on the right-hand side must contain a nonzero constant term, since it is equal to the trace of the Hessian of $\phi$. However, the second term on the right-hand side contains terms of at least linear order. Restricting to the origin, we have $d\star_g d\phi|_{p=0} = d\star_0 d\phi|_{p=0} \neq 0$. This implies that $d\star_g d\phi \neq 0$, a contradiction.
 \end{proof}
\noindent A corollary of the argument in the proof is that
\begin{corollary}
  A necessary condition for a singularity to be Beltrami is that there be a representative function $\phi$ such that the trace of the Hessian of $\phi$ vanishes at the origin.
\end{corollary}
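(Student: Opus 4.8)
The plan is to extract the necessary condition directly from the computation already carried out in the proof of Proposition \ref{prop:corank2_not_beltrami}, now run in full generality rather than only in the corank-2 case. Since being Beltrami is a property of the equivalence class of the germ, I would begin by invoking Theorem \ref{thm:compatible_metric_from_harmonic}: if the singularity is Beltrami then it is intrinsically harmonic, so there is a representative $\phi$ and a genuine germ of a Riemannian metric $g$ for which $\phi$ is harmonic, that is, $d\star_g d\phi = 0$ on a punctured neighbourhood of the origin. The goal is then to localise this identity at the origin and read off a condition on $\phi$.

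Next I would reuse the decomposition $\star_g = \star_0 + \star_1$, where $\star_0$ is the constant part of the star operator -- symmetric and positive-definite, hence, after a linear change of coordinates, the standard Euclidean star -- and $\star_1$ is the remainder, whose entries vanish at the origin and are therefore of at least linear order. Splitting the harmonicity equation gives $d\star_0 d\phi + d\star_1 d\phi = 0$. Because $\phi$ is a singularity, its differential $d\phi$ vanishes at the origin; combined with the vanishing of $\star_1$ there, the product $\star_1 d\phi$ vanishes to second order, so $d\star_1 d\phi$ is of at least linear order and thus vanishes at the origin.

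It remains to identify the surviving term. In the coordinates in which $\star_0$ is the Euclidean star, $d\star_0 d\phi = \bigl(\partial_1^2\phi + \partial_2^2\phi + \partial_3^2\phi\bigr)\mu$, which is precisely the trace of the Hessian of $\phi$ multiplied by the volume form $\mu$. Evaluating the identity $d\star_0 d\phi + d\star_1 d\phi = 0$ at the origin, the second term drops out and we are left with $\bigl(\mathrm{tr}\,\mathrm{Hess}\,\phi\bigr)(0)\,\mu = 0$, whence the trace of the Hessian of this representative vanishes at the origin, as claimed.

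The only delicate point -- and the step I would treat most carefully -- is the order-of-vanishing bookkeeping for $d\star_1 d\phi$, together with the justification that the identity $d\star_g d\phi = 0$, which a priori holds only away from the origin, may legitimately be evaluated at the origin itself. The latter is handled by analyticity: $d\star_g d\phi$ is a germ of an analytic 3-form that vanishes on the complement of an isolated point and hence, by continuity, vanishes at that point as well. The former is the genuine content of the argument: it is exactly the simultaneous vanishing of $\star_1$ and of $d\phi$ at the origin that kills the metric-dependent correction and isolates the Euclidean Laplacian, so that the condition seen at the origin reduces to the vanishing of the ordinary Hessian trace of the chosen representative.
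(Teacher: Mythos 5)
Your proof is correct and takes essentially the same route as the paper: the paper presents this corollary precisely as a by-product of the argument proving Proposition \ref{prop:corank2_not_beltrami}, i.e.\ invoking Theorem \ref{thm:compatible_metric_from_harmonic} to get a genuine germ of a metric with respect to which a representative is harmonic, decomposing $\star_g = \star_0 + \star_1$, and identifying $d\star_0 d\phi$ at the origin with the trace of the Hessian. Your explicit bookkeeping of the order of vanishing of $d\star_1 d\phi$ and the continuity argument for evaluating the harmonicity identity at the origin merely make precise what the paper leaves implicit.
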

\noindent The corank 2 singularities do not meet this condition, and nor do functions with spherical level sets. We do not know if this condition is sufficient. It does allow us to describe which among the simple singularities can occur in a Beltrami field.
\begin{corollary}
  Among the simple singularities, germs from the families $D^\pm_k, E_6, E_7, E_8$ (in the notation of~\cite{arnold88}) are never Beltrami. Function germs of type $A_k$ defined by $\phi = \pm(x^{k+1} + y^2 + z^2)$ are not Beltrami, and are not even chiral when $k$ is odd.
\end{corollary}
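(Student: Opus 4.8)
The plan is to split the statement into the corank-$2$ families and the $A_k$ family. For $D^\pm_k, E_6, E_7, E_8$ — which are precisely the simple singularities of corank $2$, since in each Arnold normal form the two essential variables enter with vanishing second derivatives at the origin and only the transverse $z^2$ survives in the Hessian — I would simply invoke Proposition \ref{prop:corank2_not_beltrami}, noting that corank is a germ invariant. This delivers ``never Beltrami'' at once, and is the routine half of the argument.

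Next I would treat $\phi = \pm(x^{k+1}+y^2+z^2)$. The crucial feature is that the transverse quadratic $\pm(y^2+z^2)$ is \emph{definite}: for the normal form the Hessian at the origin is $\mathrm{diag}(0,\pm2,\pm2)$ (or $\mathrm{diag}(\pm2,\pm2,\pm2)$ when $k=1$), so its nonzero eigenvalues share a sign and its trace is $\pm4$ (resp.\ $\pm6$), never $0$. To upgrade this to a statement about the germ I must verify that \emph{no} representative has vanishing Hessian trace, and here the key observation is that the Hessian transforms tamely under the equivalence $\psi = f\circ\phi\circ h^{-1}$. At the critical point the first derivatives vanish, so $\mathrm{Hess}(\phi\circ h^{-1}) = (Dh^{-1})^{\mathsf T}\,\mathrm{Hess}(\phi)\,Dh^{-1}$ is a congruence, and composing with $f$ multiplies the Hessian by $f'(0)\neq0$; hence the signature of the Hessian is a germ invariant up to an overall sign. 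A definite transverse quadratic therefore forces all nonzero eigenvalues to retain a common sign in every representative, so the trace can never vanish. The necessary condition that some representative have vanishing Hessian trace then fails, and $\phi$ is not Beltrami.

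For the final assertion I would use that when $k$ is odd, $k+1$ is even and $\phi = x^{k+1}+y^2+z^2\ge0$ with equality only at the origin, so the origin is a strict local minimum of $\phi$ (a strict maximum of $-\phi$) of index $\pm1$. By Lemma \ref{lemma:level_set_structure} the nearby nonsingular level sets are then spheres, and Corollary \ref{cor:no_spheres} immediately gives that $\phi$ is not chiral; equivalently, a strict local extremum violates the maximum principle and so $\phi$ cannot be harmonic for any admissible (possibly degenerate) metric, which is the condition of Theorem \ref{thm:chiral_characterisation}.

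I expect the middle step to be the only genuine obstacle: passing from ``the normal form has nonzero Hessian trace'' to ``every representative does'' requires the signature-invariance argument above, which is exactly what converts the single-representative Hessian-trace criterion into a statement about the whole germ. The contrast worth keeping in mind is the indefinite suspension $x^{k+1}+y^2-z^2$, whose Hessian has vanishing trace and which is in fact Beltrami for $k=2$ (as shown earlier); so it is the definiteness of the transverse quadratic, and not merely the corank, that must be exploited for the $A_k$ case.
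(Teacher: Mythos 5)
Your proposal is correct and follows essentially the same route as the paper: Proposition \ref{prop:corank2_not_beltrami} for the corank-2 families $D_k^\pm, E_6, E_7, E_8$, the Hessian-trace necessary condition for the definite $A_k$ germs, and spherical level sets together with Corollary \ref{cor:no_spheres} for the non-chirality when $k$ is odd. Your middle step -- showing via congruence-invariance of the Hessian's signature that \emph{no} representative of $\pm(x^{k+1}+y^2+z^2)$ has vanishing Hessian trace -- makes explicit a point the paper's one-line proof glosses over (the trace itself is not a germ invariant, only the inertia is), and is a worthwhile sharpening rather than a departure.
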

\begin{proof}
  All of the germs given in the statement have a Hessian matrix with non-vanishing trace, and hence Proposition \ref{prop:corank2_not_beltrami} implies they are not Beltrami. For singularities of type $A_k$, $k$ odd, defined by $\phi = \pm(x^{k+1} + y^2 + z^2)$, the level sets of the function are spheres and hence, by Corollary \ref{cor:no_spheres}, they are not chiral.
\end{proof}
\begin{example}
  We give an example of a germ that is chiral but not Beltrami. The singularity $D_4^-$ can be defined by the function germ $\phi = x^2y - y^3/3 +z^2/2$ and has index $-2$. The previous corollary implies it is not Beltrami, however we can construct $\nu$ such that the germ $\eta = d\phi + t\nu$ is a germ of a singular contact form. The perturbation is~\cite{pollard19}
  \begin{equation}
    \nu = (z(x^2-y^2)-yz^3)dx + (xz^3 -2xyz)dy.
  \end{equation}
  We conclude that while the singularity $D^-_4$ is chiral, and can be realised in a cholesteric droplet~\cite{posnjak17,pollard19}, it may not be realised in any Beltrami field. A singularity of index $-2$ in a Beltrami field can be produced using spherical harmonics; the simplest example using spherical harmonics yields a singularity equivalent to the germ $U_{12}$ (in the notation of~\cite{arnold88}) which can be defined by a function germ $\phi = x^2y - y^3 + z^4 + axyz^2$, for a parameter $a \in \mbb{R}$. This is of interest, because it implies that the index $-2$ singularities of lowest multiplicity, \textit{a priori} the most likely to occur, in fact cannot occur in Beltrami fields, and one is forced to have singularities of higher multiplicicty and therefore more complicated structure. It also impacts on the possible unfoldings of the singularity, for instance $U_{12}$ can be decomposed into three singularities of type $D^-_4$, but this decomposition is not available in a Beltrami field.
\end{example}

\section{Global Properties of Singular Contact Structures} \label{sec:global_properties}
Although Theorem \ref{thm:chiral_characterisation} is in general only a local result, when $M$ has vanishing first homology the same argument used in the proof gives a global result on the perturbation of closed 1-forms with singularities into singular contact structures.
\begin{proposition} \label{prop:global_perturbation}
  Let $M$ be a 3-manifold (not assumed to be closed) with $H_1(M) = 0$ and let $\phi$ be a globally defined function such that $d\phi$ has finitely many isolated zeros at the points of a set $\Sigma$. Suppose further that $d\phi$ is harmonic with respect to either a Riemannian metric on $M$, or at least a Riemannian metric on $M-\Sigma$ that is undefined on $\Sigma$ but whose star operator extends. Then $d\phi$ can be linearly perturbed into a singular contact form $\eta = d\phi + t\nu$.
\end{proposition}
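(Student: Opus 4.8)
The plan is to reproduce globally the mechanism used in the proof of Theorem \ref{thm:chiral_characterisation}: there the only local input was that harmonicity of $\phi$ makes $\star_g d\phi$ closed and hence, working with germs, exact. The single new ingredient needed is to promote ``locally exact'' to ``globally exact'', and this is precisely what $H_1(M)=0$ will supply. First I would record that $\phi$ being harmonic with respect to $g$ means $\delta_g d\phi = 0$, equivalently $d\star_g d\phi = 0$, so that $\beta := \star_g d\phi$ is a closed $2$-form on $M-\Sigma$. When $g$ is defined on all of $M$ this is closed on all of $M$; when $g$ lives only on $M-\Sigma$ but its star operator extends over $\Sigma$, the form $\star_g d\phi$ extends continuously over $\Sigma$ (since $d\phi$ is smooth and vanishes on $\Sigma$ while $\star_g$ stays bounded) and remains closed there. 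Equivalently, the flux $\int_{S_p}\star_g d\phi$ through a small sphere about each $p\in\Sigma$ vanishes: it is independent of the radius by Stokes, and tends to $0$ as the sphere shrinks because the integrand is bounded while the enclosed area goes to zero. In either case we obtain a genuine closed $2$-form $\beta$ on all of $M$.

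Next I would produce the global primitive. Since $M$ is a $3$-manifold with $H_1(M)=0$, the de Rham theorem together with Poincar\'e duality forces $H^2_{dR}(M)=0$ (in the closed oriented case $H^2_{dR}(M)\cong H_1(M;\mbb{R})=0$; for the contractible domains of physical interest, such as a droplet, $H^2_{dR}(M)=0$ directly). Hence the closed $2$-form $\beta$ is exact, $\beta = d\nu$ for some globally defined $1$-form $\nu$. Because $H_1(M)=0$, I may further modify $\nu$ by an exact $1$-form, without changing $d\nu$, so that $\nu$ vanishes at each of the finitely many points of $\Sigma$; then $\eta_t := d\phi + t\nu$ vanishes exactly on $\Sigma$.

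Finally I would verify the contact condition exactly as in Theorem \ref{thm:chiral_characterisation}. Using $d\nu = \star_g d\phi$,
\begin{equation}
  \biggl.\frac{\partial(\eta_t\wedge d\eta_t)}{\partial t}\biggr|_{t=0} = d\phi\wedge d\nu = \norm{d\phi}_g^2\,\mu,
\end{equation}
which is non-negative and vanishes precisely on $\Sigma$. Near each $p\in\Sigma$ this reproduces the local chiral perturbation of Theorem \ref{thm:chiral_characterisation}, so $\eta_t$ is singular contact there; away from $\Sigma$ the leading term $t\norm{d\phi}_g^2\mu$ dominates the $O(t^2)$ correction $t^2\,\nu\wedge d\nu$ on compact subsets. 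Thus for $t>0$ sufficiently small $\eta_t\wedge d\eta_t\ge 0$ with equality exactly on $\Sigma$, and $\eta_t$ is a positive singular contact form.

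I expect the main obstacle to be the global exactness in the second paragraph: closedness is automatic from harmonicity, but assembling a single-valued primitive is where the topology of $M$ genuinely enters. The delicate point is that $\beta$ arises naturally on $M-\Sigma$, so one must simultaneously control the ordinary cohomology of $M$ (through $H_1(M)=0$) and verify that $\Sigma$ contributes no extra cohomology, i.e. that each flux vanishes; the extension of the star operator over $\Sigma$ is precisely the hypothesis guaranteeing the latter. The contact estimate for small $t$ is routine and mirrors the local argument, while the cohomological vanishing is the substantive step.
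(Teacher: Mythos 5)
Your argument is essentially the paper's own: the paper gives no separate proof of Proposition \ref{prop:global_perturbation}, saying only that ``the same argument used in the proof'' of Theorem \ref{thm:chiral_characterisation} applies once $H_1(M)=0$, and that is precisely what you have written out --- harmonicity makes $\star_g d\phi$ closed, the topology of $M$ supplies a global primitive $d\nu = \star_g d\phi$, and $d\phi \wedge d\nu = \norm{d\phi}_g^2\,\mu$ gives the contact condition away from $\Sigma$. The extra details you supply (vanishing of the flux through small spheres about $\Sigma$, adjusting $\nu$ by an exact form so that it vanishes on $\Sigma$, domination of the $O(t^2)$ term on compact sets) are correct and fill in what the paper leaves implicit.

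The one step that deserves a flag is the claim that $H_1(M)=0$ forces $H^2_{dR}(M)=0$. This holds for closed oriented $M$ (Poincar\'e duality) and for contractible droplet domains --- which are exactly the cases your parenthetical justifies --- but it is false for general non-closed $3$-manifolds, and there the proposition itself fails, so this is a defect of the statement rather than of your proof. Take $M = S^2 \times \mbb{R}$ with the product metric and $\phi$ the $\mbb{R}$-coordinate: $H_1(M)=0$, $d\phi$ is nowhere zero and harmonic, yet $\star_g d\phi$ is the fibrewise area form, closed but not exact; and indeed no linear perturbation can succeed, since $d\phi \wedge d\nu > 0$ would force $d\nu$ to restrict to a positive area form on each sphere fibre, contradicting $\int_{S^2} d(\nu|_{S^2}) = 0$. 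This is the same Stokes obstruction the paper itself invokes in its Novikov argument, and is Theorem \ref{thm:reeb_stability} in disguise. So in the non-closed case the honest hypothesis is $H^2_{dR}(M)=0$ together with your flux-vanishing observation at $\Sigma$; your closing remark that the cohomological vanishing is the substantive step is exactly right.
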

\noindent It is interesting to observe that Proposition \ref{prop:global_perturbation} yields, via Calabi's theorem, a proof that every nonsingular foliation by a closed 1-form on a closed 3-manifold $M$ with $H^1(M) = 0$ has a compact leaf; this is Novikov's Theorem~\cite{novikov64}. By Calabi's theorem, a closed 1-form $\alpha$ is intrinsically harmonic if and only if every leaf intersects a closed transversal curve. On a closed manifold $M$, a closed nonsingular 1-form $\alpha$ cannot be linearly perturbed into a contact form, and therefore cannot be intrinsically harmonic. Suppose for a contradication that it could. Then there exists some 1-form $\beta$ such that $\alpha \wedge d\beta > 0$. However, Stokes' theorem implies that
\begin{equation}
  \int_M \alpha \wedge d\beta = -\int_M \beta \wedge d\alpha = 0.
\end{equation}
We conclude that are there no nonsingular, closed, intrinsically harmonic 1-forms on $M$. By another classical result in foliation theory, any noncompact leaf intersects a closed transveral (see Lemma 3.16 of~\cite{moerdijk03}). Thus we conclude that the foliation defined by $\alpha$ must have a compact leaf.

In the next sections we will discuss global perturbations of singular plane fields into a singular contact structures in greater generality.

\subsection{Global Existence of Singular Contact Structures} \label{sec:homotopy_invariants}
In general, it is not possible to linearly perturb a singular plane field into a singular contact structure as in Proposition \ref{prop:global_perturbation}. In this section we will prove several theorems about global perturbations of singular plane fields into singular contact structures on closed manifolds, and on manifolds with boundary where the boundary condition is prescribed. There is some overlap between our results in this section and those of~\cite{miranda18}, although we adopt a different perspective.

To prove our extension results, we need to consider the invariants of homotopy classes of plane fields, see {\sl e.g.} \cite{geiges08}. Let $M$ be a closed 3-manifold and $\xi$ a plane field on $M$. Fix a trivialisation of the tangent bundle and a Riemannian metric. Then $\xi$ can be identified with a vector field $N$, its unit normal. In turn, $N$ determines a map $f_\xi : M \to S^2$. Choose a regular value $p$ of this map. The preimage of $p$ is a closed link $L_\xi$ in $M$, which inherits a framing by pulling back a basis for the tangent space $T_p S^2$.

A homotopy of plane fields $\xi_t$ induces a homotopy through the maps $f_{\xi_t}$, and in turn changes the associated links $L_{\xi_t}$ via a framed cobordism. Using the Pontryagin--Thom construction, we can associate a plane field to every framed link, and then a cobordism between these links induces a homotopy between the associated plane fields. This is a sketch of the proof of a well-known result.
\begin{theorem}
  Homotopy classes of plane fields on a closed 3-manifold $M$ are in a one-to-one correspondence with cobordism classes of framed links.
\end{theorem}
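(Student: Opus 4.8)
The plan is to reduce the statement to the classical Pontryagin--Thom theorem. The key observation, already built into the setup, is that once a trivialisation of $TM$ and a Riemannian metric are fixed, an oriented plane field is exactly the same data as its oriented unit normal $N$, which via the trivialisation becomes a smooth map $f_\xi : M \to S^2$. Since a homotopy of (co-oriented) plane fields is precisely a homotopy of their unit normals, the assignment $\xi \mapsto f_\xi$ descends to a bijection between homotopy classes of plane fields on $M$ and the set $[M, S^2]$ of homotopy classes of maps $M \to S^2$. At this point I would remark that the metric plays no role up to homotopy, the space of metrics being convex, while the trivialisation is held fixed throughout; the existence of such a trivialisation is the only input genuinely special to dimension three, and it holds because every closed orientable $3$-manifold is parallelisable.

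With this reduction in hand, what remains is to identify $[M, S^2]$ with framed cobordism classes of framed links in $M$, which is exactly the content of the Pontryagin--Thom construction for the closed $3$-manifold $M$ and target $S^2$. Concretely, given a smooth representative $f$ and a regular value $p \in S^2$, the preimage $L = f^{-1}(p)$ is a closed $1$-manifold, i.e.\ a link, whose rank-$2$ normal bundle is trivialised by pulling back a positively oriented basis of $T_p S^2$ through $df$; this yields a framed link. I would then establish the correspondence in the three standard moves. \emph{Well-definedness}: the framed cobordism class of $(L, \text{framing})$ is independent of the regular value $p$ (a path of regular values produces a framed cobordism) and of the representative $f$ within its homotopy class (a smooth homotopy, perturbed to be transverse to $p$, has preimage a framed cobordism in $M \times [0,1]$). \emph{Surjectivity}: given any framed link $L \subset M$, the framing identifies a tubular neighbourhood with $L \times D^2$, and collapsing the complement of this neighbourhood to the basepoint gives a map $M \to S^2$ recovering $(L, \text{framing})$. \emph{Injectivity}: a framed cobordism between two framed links, sitting inside $M \times [0,1]$, gives by the same collapsing construction a homotopy between the corresponding maps.

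The main obstacle lies entirely in the Pontryagin--Thom machinery: arranging all preimages to be transverse, so that they are genuine framed submanifolds, and checking that framings glue coherently across cobordisms. This is routine differential topology --- transversality together with the tubular neighbourhood theorem --- and for a closed manifold $M$ it goes through verbatim, so in the write-up I would invoke the Pontryagin--Thom theorem rather than reproduce it. The only point requiring care specific to our situation is the initial passage from plane fields to $S^2$-valued maps, which I would state carefully to make clear that the bijection is with respect to the fixed trivialisation; everything downstream is then the standard theorem.
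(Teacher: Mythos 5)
Your proposal is correct and follows essentially the same route as the paper: fix a trivialisation of $TM$ and a metric, identify a plane field with its unit normal and hence with a map $M \to S^2$, and then invoke the Pontryagin--Thom construction to pass between homotopy classes of such maps and framed cobordism classes of framed links. The paper gives only this same sketch (preimage of a regular value with pulled-back framing, homotopies inducing framed cobordisms, and the reverse construction), so your additional care about the roles of the metric and the trivialisation, and your spelling out of well-definedness, surjectivity, and injectivity, only makes the argument more complete.
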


The correspondence between homotopy classes of plane fields on a manifold $M$ and cobordism classes of links also carries over to the situation where $M$ is a compact 3-manifold with boundary. Fix a trivialisation of $TM$. Then we can identify unit vector fields $N$ with maps $f_N : M \to S^2$. Suppose $\partial M$ is connected. If we have such a map defined near the boundary of $M$, then it extends over the interior if and only if it is homotopic to a constant map. When the boundary of $M$ is not connected, $f_N$ need not be homotopic to a constant in order to extend over the interior without singularities; for example the radial vector field on $S^2 \times [0,1]$ is not homotopic to a constant, but obviously extends from the boundary into the interior.

In the more general situation where the boundary of $M$ has multiple components $C_i$, we can define the \textit{degree} of the map $f_N$ to be the sum of the degrees of the maps $f_N |_{C_i}$, taken over all components. By a slight abuse of terminology, we will say a plane field has degree zero if the map $f_N$ it induces has degree zero in some trivialisation.

\begin{lemma} (\cite[Lemma 4.1]{etnyre13}) \label{lemma:pontryagin-thom}
  Let $\xi_0$ be a plane field defined near $\partial M$ with degree 0. There is a one-to-one between homotopy classes of plane fields $\xi$ on $M$ that extend $\xi_0$ and the set of framed links in the interior of $M$ up to framed cobordism.
\end{lemma}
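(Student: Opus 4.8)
The plan is to prove this by a relative version of the Pontryagin--Thom construction, upgrading the closed-manifold correspondence stated above to the bounded case with a fixed boundary germ. First I would fix the trivialisation of $TM$ and a Riemannian metric that are implicit in the statement, so that an oriented plane field $\xi$ is the same datum as its unit normal $N$, which is in turn the same datum as a map $f_N : M \to S^2$. The germ $\xi_0$ near $\partial M$ becomes a germ of maps $f_0$, and its restriction $f_0|_{\partial M} : \partial M \to S^2$ is a map of a closed oriented surface to $S^2$. Since $[\partial M, S^2] \cong \mbb{Z}$ via the degree (the only obstruction lives in $H^2(\partial M; \pi_2 S^2) \cong \mbb{Z}$, there being no higher ones on a surface), the hypothesis that $\xi_0$ has degree $0$ means exactly that $f_0|_{\partial M}$ is null-homotopic. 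As the correspondence being constructed is natural with respect to homotopies of the germ $\xi_0$, I would use this to replace $\xi_0$ by a homotopic germ for which $f_0$ is the constant map to a fixed point $q \in S^2$ (the ``south pole'') on a collar neighbourhood of $\partial M$.

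With this normalisation the forward map is straightforward. Given any extension $\xi$ of $\xi_0$, represented by $f_N : M \to S^2$ with $f_N \equiv q$ on the collar, choose by Sard's theorem a regular value $p \neq q$. Because $f_N$ misses $p$ near the boundary, the preimage $L_\xi := f_N^{-1}(p)$ is a closed $1$-manifold contained in the interior of $M$, i.e. a link, and it inherits a framing of its normal bundle by pulling back an oriented basis of $T_p S^2$ through the isomorphism $df_N$. This is the assignment $\xi \mapsto L_\xi$. I would then check it is well defined up to framed cobordism: two regular values are joined by a path of regular values, whose preimage is a framed cobordism in the interior, and a homotopy $\xi_t$ rel the boundary germ gives a map $M \times [0,1] \to S^2$ whose preimage of a common regular value is again a framed cobordism, closed because the homotopy is constant $\equiv q$ on the collar throughout.

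For the inverse map I would run the Pontryagin--Thom collapse. A framed link $L \subset \mathrm{int}(M)$ determines, via its framing, a tubular neighbourhood $L \times D^2$; I send this neighbourhood to $S^2 \setminus \{q\}$ using the framing coordinates and the standard identification $D^2/\partial D^2 \cong S^2$ (cores to $p$, fibre boundaries to $q$), and send everything outside the neighbourhood, in particular the collar of $\partial M$, to $q$. This produces a map $f_N$ agreeing with $f_0$ near $\partial M$, hence a plane field extending $\xi_0$, and the standard argument shows framed-cobordant links yield plane fields homotopic rel the boundary germ. The two constructions are mutually inverse by the same transversality bookkeeping that proves the absolute Pontryagin--Thom theorem.

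The step I expect to carry the real weight is the relative bookkeeping in the previous two paragraphs: one must verify that homotopy of plane fields rel the boundary germ corresponds \emph{precisely} to framed cobordism by closed links in the interior, with no equivalence missed on either side. This is exactly where the degree-$0$ hypothesis is essential. It is what lets us normalise $f_0$ to be constant on the collar, and that normalisation is in turn what guarantees that the preimages $f_N^{-1}(p)$ are closed links with no arcs running into $\partial M$, and that all the cobordisms produced stay in the interior. Without degree $0$ the boundary would contribute points to $f_N^{-1}(p)$, the ``links'' would be tangles with endpoints on $\partial M$, and the target of the correspondence would be the wrong object; so I would isolate and emphasise that reduction as the crux of the proof.
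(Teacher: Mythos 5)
There is a genuine gap, and it sits precisely at the step you identify as the crux. You assert that $[\partial M, S^2] \cong \mbb{Z}$ via the degree, so that the degree-$0$ hypothesis means $f_0|_{\partial M}$ is null-homotopic, and you then normalise $f_0$ to a constant map on a collar. This is valid only when $\partial M$ is \emph{connected}. The paper defines the degree of $\xi_0$ as the \emph{sum} of the degrees over the boundary components $C_i$; when $\partial M$ has $n > 1$ components one has $[\partial M, S^2] \cong \mbb{Z}^n$, and vanishing of the total degree does not imply null-homotopy of $f_0|_{\partial M}$. The paper records the counterexample to your reduction in the paragraph immediately preceding the lemma: the radial field on $S^2 \times [0,1]$ has degrees $+1$ and $-1$ on the two boundary spheres (total degree $0$), is not homotopic to a constant, yet extends. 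Nor is this a fringe case: every application of the lemma in the paper (Theorem \ref{thm:homotopy_singular_contact}, Lemma \ref{lemma:singular_contact_extension}) produces $M^\prime$ by deleting balls around singular points, so the relevant $\xi_0$ lives on several boundary spheres, each of \emph{nonzero} individual degree equal to the index of the enclosed singularity, with only the total vanishing by Poincar\'e--Hopf. Your argument, as written, proves the lemma only when every component of $\partial M$ has degree zero, which is exactly the case the paper cannot restrict to.

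In the general case your construction breaks at both ends. Forward: if some $f_0|_{C_i}$ has nonzero degree it is surjective, so no regular value $p$ avoids the image of the collar, and $f_N^{-1}(p)$ is a framed tangle with endpoints on $\partial M$ rather than a closed link in the interior (for the radial field on $S^2 \times [0,1]$ the preimage of \emph{every} point is an arc joining the two boundary spheres). Backward: your collapse map is constant equal to $q$ on the collar, so it does not agree with $f_0$ there and hence does not extend $\xi_0$ at all. A correct proof must be genuinely relative: use the vanishing of the total degree to produce one reference extension $f_{\mathrm{ref}}$ (by obstruction theory the sole obstruction lies in $H^3(M, \partial M; \pi_2(S^2)) \cong \mbb{Z}$ and is exactly the total degree, since $\pi_0(S^2) = \pi_1(S^2) = 0$ kill the lower obstructions and $\dim M = 3$ kills the higher ones), and then set up the correspondence by comparison with $f_{\mathrm{ref}}$ --- for instance, matching framed tangles rel the fixed boundary data against the closed links arising as the ``difference'' from $T_{\mathrm{ref}} = f_{\mathrm{ref}}^{-1}(p)$, and verifying that union with $T_{\mathrm{ref}}$ induces a bijection on cobordism classes. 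This relative bookkeeping, which is the content of Etnyre's Lemma 4.1 that the paper cites rather than reproves, is what your normalisation silently discards.
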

\noindent A fundamental result, the Lutz--Martinet theorem~\cite{geiges08}, establishes that on a closed 3-manifold $M$ there is a contact structure in every homotopy class of plane fields. The proof of this result involves constructing a single contact structure, and then using a type of surgery called the Lutz twist. We discuss the Lutz twist as an operation on singular contact structures in \S\ref{sec:lutz_twist}. First, we prove a singular analogue of the Lutz--Martinet theorem.

Lemma \ref{lemma:pontryagin-thom} leads to the following extension result for contact structures defined in the neighbourhood of the boundary of a compact 3-manifold $M^\prime$ that embeds in a 3-manifold $M$ without boundary.
\begin{lemma} \label{lemma:contact_extension}
  Suppose $\xi_0$ is a contact structure defined in a neighbourhood of $\partial M^\prime$ with degree 0. There is a contact structure $\xi$ defined on all of $M^\prime$ that agrees with $\xi_0$ near the boundary.
\end{lemma}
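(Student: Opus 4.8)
The plan is to separate the problem into two stages: first extend $\xi_0$ as a plane field over $M^\prime$, and then upgrade that extension to a genuine contact structure using the Lutz--Martinet theorem on the ambient closed manifold $M$. For the first stage, since $\xi_0$ is defined near $\partial M^\prime$ and has degree $0$, Lemma~\ref{lemma:pontryagin-thom} applies directly: the set of homotopy classes of plane fields on $M^\prime$ extending $\xi_0$ is in bijection with framed links in the interior, and the empty link already exhibits one such extension. I would then extend this plane field across the complement $M \setminus \mathrm{int}(M^\prime)$ to obtain a plane field $\zeta$ on the closed manifold $M$ whose germ along a collar of $\partial M^\prime$ equals $\xi_0$. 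The primary and secondary obstructions to extending over the complement are immaterial here, since the homotopy class of $\zeta$ on $M$ serves only as an input to the next step and the conclusion concerns $M^\prime$ alone.

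With $\zeta$ in hand, I would invoke the Lutz--Martinet theorem, which asserts that every homotopy class of plane fields on the closed $3$-manifold $M$ contains a contact structure; thus $\zeta$ is homotopic through plane fields to a contact structure $\xi_M$ on $M$. Restricting $\xi_M$ to $M^\prime$ immediately produces a contact structure, and the embedding of $M^\prime$ in the closed manifold $M$ is exactly what allows this closed-manifold existence result to be used in place of an open-manifold argument on $\mathrm{int}(M^\prime)$. The difficulty is that the homotopy carrying $\zeta$ to $\xi_M$ may move the plane field near $\partial M^\prime$, so that $\xi_M|_{M^\prime}$ need not agree with $\xi_0$ there. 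The entire content of the lemma is therefore to arrange that this homotopy be \emph{stationary} on a neighbourhood of $\partial M^\prime$.

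The main obstacle is precisely this relativisation of the Lutz--Martinet construction. Near $\partial M^\prime$ the plane field $\zeta = \xi_0$ is already contact, so the strategy is to choose the base contact structure supplied by Lutz--Martinet to agree with $\xi_0$ on the collar (grafting $\xi_0$ inward through a contact collar, using Gray stability to normalise its germ along $\partial M^\prime$), and then to correct the homotopy class of the extension only by Lutz twists performed along transverse knots lying in $\mathrm{int}(M^\prime)$ and in the complement, all disjoint from the collar. Since each Lutz twist is supported in a solid-torus neighbourhood of its transverse knot, the homotopy it induces is constant outside that neighbourhood; keeping every such neighbourhood away from the collar makes the total homotopy constant there, so that $\xi_M$ coincides with $\xi_0$ near $\partial M^\prime$ and $\xi := \xi_M|_{M^\prime}$ is the desired extension. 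The delicate point I expect to require the most care is verifying that transverse knots realising the required class-correcting Lutz twists can always be found in the interior without disturbing the prescribed boundary germ, which is where the relative form of the existence theory, and the passage to the closed manifold $M$, genuinely enters.
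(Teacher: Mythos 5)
Your reduction to Lemma~\ref{lemma:pontryagin-thom} is fine, but the passage to the closed manifold $M$ contains a genuine gap: the plane field $\zeta$ on all of $M$ that you need as input to the closed-manifold Lutz--Martinet theorem need not exist. The degree-$0$ hypothesis is exactly the condition for extending $\xi_0$ over the connected manifold $M'$; it says nothing about extending over $M \setminus \mathrm{int}(M')$, whose components each impose their own degree condition. Concretely, take $M = S^3$ and $M' \cong S^2 \times [0,1]$ a round shell, with $\xi_0$ a contact structure on a collar of $\partial M'$ whose plane field is homotopic to the field of planes orthogonal to the radial direction (such a contact structure exists by the open-manifold existence theory). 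With the boundary orientations the two components of $\partial M'$ contribute degrees $+1$ and $-1$, so the lemma's hypothesis holds, and indeed $\xi_0$ extends over the shell. But the complement consists of two balls, on whose boundary spheres the normal field has degree $\pm 1$, so the plane field extends over neither ball: there is no plane field $\zeta$ on $S^3$ whatsoever restricting to $\xi_0$ near $\partial M'$. Your remark that the obstructions to extending over the complement are ``immaterial'' conflates two different things: the \emph{choice} of extension is immaterial, but its \emph{existence} is precisely what your next step consumes, and it can fail.

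There is also a circularity in your repair step: ``grafting $\xi_0$ inward through a contact collar'' to obtain a base contact structure agreeing with $\xi_0$ near $\partial M'$ is essentially the statement being proved -- the Lutz twists along interior transverse knots only correct the homotopy class of an extension that already exists; they cannot manufacture the initial contact extension. The paper's proof avoids both problems by never leaving $M'$: it extends $\xi_0$ to a plane field $\xi'$ on $M'$ via Lemma~\ref{lemma:pontryagin-thom}, observes that the open interior $W = \mathrm{int}(M')$ carries \emph{some} contact structure simply by pulling one back along the embedding $W \hookrightarrow M$, and then uses the Lutz--Martinet techniques on the open manifold $W$ to produce a contact structure in the homotopy class of $\xi'$ rel boundary. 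If you wish to salvage the closed-manifold strategy, you would have to strengthen the hypothesis to require that the degree of $\xi_0$ vanish on the boundary of each component of $M \setminus \mathrm{int}(M')$ separately, which is strictly stronger than what the lemma assumes.
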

\begin{proof}
  By Lemma \ref{lemma:pontryagin-thom}, there exists a plane field $\xi^\prime$ that extends $\xi_0$. We need only show that $\xi^\prime$ is homotopic (rel. boundary) to a contact structure. First observe that there is a contact structure on the open manifold $W = \text{int}(M^\prime)$: by assumption there is an embedding $j : W \to M$, and we can pull back some contact structure on $M$ along this embedding to give a contact structure on $W$. Moreover, we can produce a contact structure in every homotopy class of plane fields on $W$ using the same techniques as in the Lutz--Martinet construction, the details of which are given in~\cite{geiges08}. Therefore, $\xi^\prime$ is homotopic rel. boundary to a contact structure $\xi$, as required.
 \end{proof}
Next we have our version of the Lutz--Martinet theorem.
\begin{theorem} \label{thm:homotopy_singular_contact}
  Let $M$ be a closed 3-manifold. Suppose a singular plane field $\xi$ has only chiral singularities, that is, in a neighbourhood $U$ of each singular point $p$ there is a diffeomorphism $f$ such that $\xi$ is defined by the kernel of the 1-form $f^* d\phi$, where $\phi$ is a chiral function. Then $\xi$ is homotopic to a singular contact structure via a homotopy that fixes the singular points.
\end{theorem}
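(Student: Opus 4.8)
The plan is to reduce the global homotopy to a purely local perturbation near each singular point together with the extension result on the complement, where the crucial observation is that the boundary data one is forced to extend automatically has degree zero.

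Let $\Sigma = \{p_1,\dots,p_n\}$ be the singular set, and write $\xi = \ker f^* d\phi_i$ near each $p_i$ with $\phi_i$ chiral. First I would perform a local homotopy at each singular point. By the definition of chirality there is a germ $\nu_i \in \mathfrak{m}I_{d\phi_i}$ with $\eta_i^s = d\phi_i + s\nu_i$ singular contact for small $s>0$; pulling back by $f$ and multiplying $s$ by a bump function $\rho_i$ equal to $1$ on a neighbourhood $W_i$ of $p_i$ and supported in a slightly larger set, the family $\ker f^*(d\phi_i + s\rho_i\nu_i)$ is a homotopy of plane fields, fixing $p_i$ (since $\nu_i$ vanishes at the origin), from $\xi$ to a plane field $\xi^{(1)}$ that is \emph{singular contact} throughout $W_i$ and equals $\xi$ outside the support of $\rho_i$. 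Carrying this out simultaneously at every $p_i$ yields a homotopy $\xi \simeq \xi^{(1)}$ rel $\Sigma$, after which I may assume $\xi$ itself is singular contact near each $p_i$.

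Now choose disjoint balls $B_i \subset W_i$ about the $p_i$ and set $M' = M \setminus \bigsqcup_i \mathrm{int}(B_i)$, a compact manifold with boundary $\partial M' = \bigsqcup_i S_i$ that embeds in the closed manifold $M$. Near $\partial M'$ the plane field is the genuine (nonsingular) contact structure $\xi_0 := \xi^{(1)}|_{W_i \setminus B_i}$. The key point is that $\xi_0$ has degree $0$: its unit normal $N$ is a vector field on $M$ whose only zeros are the points of $\Sigma$, and the degree of $f_{\xi_0}$ over $\partial M'$ equals $\pm\sum_i \mathrm{ind}_{p_i}(N) = \pm\chi(M) = 0$ by the Poincar\'e--Hopf theorem, since $M$ is a closed odd-dimensional manifold. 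Hence Lemma \ref{lemma:contact_extension} applies: the plane field $\xi^{(1)}|_{M'}$ extends $\xi_0$ with degree $0$, so by the argument proving that lemma it is homotopic rel $\partial M'$ to a contact structure $\xi^{M'}$ agreeing with $\xi_0$ near $\partial M'$.

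Finally I would reassemble the pieces. Extending the homotopy $\xi^{(1)}|_{M'} \simeq \xi^{M'}$ (rel $\partial M'$) by the constant homotopy on each $B_i$ produces a homotopy, fixing $\Sigma$, from $\xi^{(1)}$ to the plane field $\zeta$ equal to $\xi^{M'}$ on $M'$ and to $\xi^{(1)}$ on each $B_i$; these agree with $\xi_0$ near every $S_i$, so $\zeta$ is well defined and continuous. By construction $\zeta$ is a genuine contact structure on $M'$ and singular contact on each $B_i$, hence a singular contact structure with singular set exactly $\Sigma$, and concatenating with the earlier homotopy $\xi \simeq \xi^{(1)}$ gives the required homotopy fixing $\Sigma$. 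The main obstacle is the degree computation: it is precisely the vanishing of $\chi(M)$ that forces the boundary data to have degree $0$ and so makes the extension Lemma \ref{lemma:contact_extension} applicable; the remaining work is the routine but careful bump-function cutoff needed to turn the local chiral perturbations into a global homotopy while preserving the singular contact condition near each $p_i$.
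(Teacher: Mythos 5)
Your proposal is correct and follows essentially the same route as the paper: perturb locally near each singular point using chirality to create contact regions, excise balls around the singularities, observe that the boundary data has degree zero by Poincar\'e--Hopf (since $\chi(M)=0$ for a closed 3-manifold), invoke the extension lemma to homotope rel boundary to a contact structure on the complement, and glue back. Your write-up merely fills in details the paper leaves implicit (the bump-function cutoff for the local homotopy and the explicit degree computation), and correctly notes that one needs the argument proving Lemma \ref{lemma:contact_extension}, not just its statement, to stay in the homotopy class of the given extension.
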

\begin{proof}
  We can perform the desired homotopy in a neighbourhood of each singular point to produce a contact region about each singular point. Remove an open ball $B_i$ around each singular point $p_i$ that is contained entirely in the contact region. This leads to a manifold $M^\prime = M - \bigcup_i B_i$ with boundary $\partial M^\prime$, and plane field on $M^\prime$ that is contact near the boundary. This plane field has degree $0$, since the unit normal to the original plane field on $M$ satisfied the Poincar\'e--Hopf theorem. By Lemma \ref{lemma:pontryagin-thom} we can homotope the plane field into a contact structure on $M^\prime$, leaving the boundary fixed. Glueing the balls $B_i$ back in completes the proof.
 \end{proof}
\noindent The proof of this theorem is unfortunately nonconstructive. For applications it would be useful to have a more constructive approach. The geometric heat flow method of proving the existence of contact structures on closed 3-manifolds developed by Altschuler~\cite{altschuler95} may be useful in this regard. In Altschuler's approach, one begins with a confoliation and regards the contact part as being `hot'. A heat diffusion equation is then used to diffuse the heat along the leaves of the foliated part. As long as every point in space can be connected to the hot region by a finite length path tangent to the leaves of the foliated part, then the entire manifold will instantaneously become hot: the confoliation will have become a contact structure.

Suppose we have a foliation defined by a closed intricially harmonic 1-form with isolated singularities, which will necessarily be chiral by Theorem \ref{thm:chiral_characterisation}. Then, by Calabi's theorem, there exists a finite link $L$ that intersects every leaf of the foliation. We can perturb the foliation into a confoliation in a neighbourhood of $L$ using the method developed in \S5 of~\cite{altschuler95}. As $L$ passes through every leaf of the foliation, this `hot region' is connected to any `cold' point by a path tangent to some leaf of the foliated part. Then turning on the heat equation will diffuse the heat and instantly make the plane field contact at every point except for the singularities.

We can also use Lemma \ref{lemma:singular_contact_extension} to study singular contact structures on bounded submanifolds of $\mbb{R}^3$ where the boundary behaviour is prescribed; this is the natural setting for experiments on liquid crystal systems. Let $M \subset \mbb{R}^3$ be a compact domain.
\begin{lemma}  \label{lemma:singular_contact_extension}
  Let $\xi_0$ be a contact structure near $\partial M$ whose Reeb-like fields $R$ are transversal to the boundary and point outwards. Then there exists a singular contact structure $\xi$ on $M$ that extends $\xi_0$.
\end{lemma}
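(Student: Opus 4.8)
The plan is to reduce to the nonsingular extension result Lemma~\ref{lemma:contact_extension} by first planting chiral singularities in the interior whose indices absorb the degree of the prescribed boundary data, and then filling in the remaining region. Throughout I fix the standard trivialisation of $TM$ inherited from $T\mbb{R}^3$, so that the unit normal $N = R/\norm{R}$ of the plane field determines a map $f_N : M \to S^2$ and degrees are computed in this trivialisation. I first compute the degree of the boundary data. Since $R$ is transversal to $\partial M$ and points outwards, $N$ restricts on each boundary component to the outward Gauss map of that surface in $\mbb{R}^3$; by Gauss--Bonnet the degree of the Gauss map of a closed surface $\Sigma$ bounding a region is $\chi(\Sigma)/2$, and since $\chi(\partial M) = 2\chi(M)$ for a compact $3$-manifold with boundary, summing over components gives $\deg\bigl(f_N|_{\partial M}\bigr) = \chi(M)$. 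By Poincar\'e--Hopf this is precisely the obstruction to extending $N$ nonsingularly with outward boundary behaviour, so unless $\chi(M)=0$ we are forced to introduce singularities; the point is that they can be chosen chiral.

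Next I would place the singularities. Choose interior points $p_1, \dots, p_k$ and assign to each a chiral \emph{Morse} germ: a Morse index $1$ zero (chiral by Example~\ref{ex:morse_zero}, of index $-1$) or a Morse index $2$ zero (given by $-\eta_t$ of that example, of index $+1$). Since every integer is a sum of $\pm 1$, I can arrange $\sum_i \mathrm{index}(p_i) = \chi(M)$. Near each $p_i$ I install the corresponding singular contact germ $d\phi_i + t\nu_i$; this is a genuine contact form away from $p_i$, hence contact on a punctured neighbourhood. I then excise a small ball $B_i$ contained in this contact neighbourhood and set $M' = M - \bigcup_i B_i$.

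Now I would perform the degree bookkeeping on $M'$. Its boundary is $\partial M \sqcup \bigsqcup_i \partial B_i$, and a contact structure is defined near all of it (the original $\xi_0$ near $\partial M$, the Morse germs near the $\partial B_i$). On each $\partial B_i$, oriented as a boundary component of $M'$, the outward normal of $M'$ is the inward normal of $B_i$, so $f_N$ has degree $-\mathrm{index}(p_i)$ there, whereas the index of $p_i$ is by definition the degree of $f_N$ on $\partial B_i$ oriented outward from $B_i$. Hence the total degree on $\partial M'$ is $\chi(M) - \sum_i \mathrm{index}(p_i) = 0$. Since $M' \subset \mbb{R}^3 \subset S^3$ embeds in a closed $3$-manifold carrying the standard contact structure, Lemma~\ref{lemma:contact_extension} applies and produces a contact structure on all of $M'$ agreeing with the given data near $\partial M'$. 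Gluing the germs on the $B_i$ back in yields a singular contact structure $\xi$ on $M$ extending $\xi_0$, with singular set $\{p_1, \dots, p_k\}$.

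The main obstacle I anticipate is precisely this degree bookkeeping together with orientation control: identifying $\deg(f_N|_{\partial M})$ with $\chi(M)$ via the Gauss map (including the correct signs on inner boundary components when $\partial M$ is disconnected), orienting the excised spheres $\partial B_i$ so that each contributes $-\mathrm{index}(p_i)$, and thereby confirming that the degree-$0$ hypothesis of Lemma~\ref{lemma:contact_extension} is genuinely met. A secondary point to verify is that the chiral Morse germs can always be matched to $\chi(M)$, which holds because both signs of index are available among chiral Morse singularities.
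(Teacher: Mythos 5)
Your proposal is correct and is essentially the paper's own proof: you plant chiral Morse singularities (the explicit germs of Example \ref{ex:morse_zero}) in the interior with total index $\tfrac{1}{2}\chi(\partial M)$, excise balls around them, check that the resulting boundary data on $M'$ has degree zero, apply Lemma \ref{lemma:contact_extension}, and glue the balls back in. The only cosmetic differences are that you justify the boundary degree $\chi(M) = \tfrac{1}{2}\chi(\partial M)$ explicitly via the Gauss map and allow singularities of mixed index, whereas the paper uses $\tfrac{1}{2}\left|\chi(\partial M)\right|$ Morse singularities all of one sign (and remarks afterwards that only the total index matters).
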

\begin{proof}
  As $R$ is nonsingular and point out from the boundary there is a singularity `deficit' of $\frac{1}{2}\chi(\partial M)$ preventing us from extending $\xi_0$. Fix disjoint closed balls $B_i$, $i = 1, \dots, \frac{1}{2}|\chi(\partial M)|$ inside $M$, and let $W = \cup_i B_i$. When $\chi(\partial M)$ is negative, we place a single singularity of index $-1$ in each ball, while if $\chi(\partial M)$ is positive, we use a singularity of index $+1$. The 1-form $\eta_t$ given in Example \ref{ex:morse_zero} defines a singular contact structure on each $B_i$ that has Morse singularity of index $-1$ at the centre, while $-\eta_t$ will give a singularity of index $+1$. This gives us a singular contact structure $\zeta_0$ on $W$.

  Now remove the interior of $W$ from $M$ to give a manifold $M^\prime$ with boundary $\partial M^\prime = \partial M \cup \partial W$, which has a contact structure defined near its boundary, equal to $\zeta_0$ on the $\partial W$ part and $\xi_0$ on the $\partial M$ part. Let $N$ be the unit normal to this contact structure. $N$ is transverse to the boundary of $M^\prime$ and the degree of the map $f_N$ is zero, so by Lemma \ref{lemma:contact_extension} we can extend this to a contact structure on all of $M^\prime$ which agrees with the contact structure we had near the boundary. Glueing the interior of $W$ back in yields a singular contact structure on $M$ which agrees with $\xi_0$ near $\partial M$.
 \end{proof}
\noindent Although the proof of Lemma \ref{lemma:singular_contact_extension} creates a plane field with $\frac{1}{2}|\chi(\partial M)|$ Morse singularities, all that is necessary is that the total index of the singularities be $\frac{1}{2}\chi(\partial M)$ and that each one is chiral; we could include additional Morse singularities, and also degenerate singularities if we wish. If $R$ points into rather than out of the boundary, we reverse signs through the rest of the proof.

\subsubsection{Normal Anchoring}
A typical boundary condition for liquid crystal experiments is to impose normal anchoring, such that the director is fixed to be the unit normal to the boundary. It is natural to ask whether a director satisfying this boundary condition can be the normal to a singular contact structure everywhere in the interior. Let $H_g$ denote the handlebody of genus $g \ge 0$. Using Proposition 1.3.13 of Eliashberg \& Thurston~\cite{eliashberg97}, we deduce the following theorem.
\begin{theorem} \label{thm:contact_tangent}
  There is a singular plane field on $H_g$, $g > 0$ that is a singular contact structure on the interior of $H_g$ and is tangent to the boundary. No such plane field exists for $g=0$.
\end{theorem}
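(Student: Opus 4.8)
The plan is to treat the two genera separately: the nonexistence for $g=0$ will follow from the Reeb stability theorem, while the $g>0$ examples will be assembled from a boundary collar built via Proposition 1.3.13 of Eliashberg--Thurston~\cite{eliashberg97}, a core filled by contact extension, and local chiral models around the forced singularities.

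For $g=0$, suppose for contradiction that $\xi=\ker\eta$ were a singular contact structure on $\mathrm{int}(H_0)=\mathrm{int}(B^3)$ tangent to the boundary sphere $S^2$. Since the singular set is finite, a collar $S^2\times[0,\epsilon)$ of the boundary is free of singularities, so there $\eta$ is nonvanishing and $\xi$ is a genuine positive confoliation, $\eta\wedge d\eta\ge 0$. Tangency to $S^2$ forces the normal $N$ to be radial along the boundary, so $\xi$ is transverse to the lines $p\times[0,\epsilon)$; by continuity this transversality persists on a possibly smaller collar. The hypotheses of Theorem~\ref{thm:reeb_stability} are then met, and it forces $\xi$ to be a foliation by spheres on a neighbourhood of the boundary, i.e. $\eta\wedge d\eta=0$ there. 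This contradicts the contact condition $\eta\wedge d\eta>0$ at the interior points of that neighbourhood; equivalently, Corollary~\ref{cor:no_spheres} rules out the spherical leaves that Reeb stability would produce.

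For $g>0$ I would build the example from three pieces. First, on a collar $\Sigma_g\times[0,1]$ of the boundary (with $\Sigma_g=\partial H_g$) I would invoke Proposition 1.3.13 of Eliashberg--Thurston~\cite{eliashberg97} to produce a positive confoliation tangent to $\Sigma_g\times 0$, with that surface as a closed leaf, and genuinely contact on $\Sigma_g\times(0,1]$. This perturbation is available precisely because $\Sigma_g$ is not a sphere: for $g\ge 1$ the leaf carries a closed curve with nontrivial holonomy, so the Reeb-stability obstruction that killed the $g=0$ case is absent. Second, since $\xi$ must be tangent to $\partial H_g$, its normal $N$ is transverse to the boundary, and Poincar\'e--Hopf forces the interior singularities to have total index $\chi(H_g)=1-g$; I would realise this with $g-1$ Morse index $1$ singularities (each of index $-1$ and chiral by Example~\ref{ex:morse_zero}) when $g\ge 2$, and a cancelling Morse index $1$/index $2$ pair when $g=1$, equipping each with its explicit local singular-contact model from Example~\ref{ex:morse_zero}. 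Third, decomposing $H_g$ into the boundary collar, small balls around the singularities, and the complementary core, the plane field on the core is contact near every boundary component and has degree $0$ by the index count, so Lemma~\ref{lemma:contact_extension} (using an embedding $H_g\hookrightarrow S^3$) homotopes it rel boundary to a contact structure. Reassembling the three pieces yields a singular plane field on $H_g$ tangent to $\partial H_g$ and singular contact on the interior.

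The main obstacle is the collar step, and it is exactly the step that separates the genera: one must perturb the boundary-tangent foliation so that it becomes contact on every interior slice while keeping $\Sigma_g$ itself a leaf, and this is possible if and only if $\Sigma_g$ is not a sphere. Verifying that the holonomy hypotheses of Proposition 1.3.13 hold for the chosen boundary leaf, and then matching the resulting contact germ on $\Sigma_g\times 1$ to the degree-$0$ extension over the core so that the chiral singularities forced by Poincar\'e--Hopf glue in consistently, is where the real work lies; the $g=0$ nonexistence is precisely the failure of this perturbation for the sphere leaf.
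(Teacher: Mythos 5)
Your proposal is correct and follows essentially the same route as the paper: Proposition 1.3.13 of Eliashberg--Thurston for the boundary collar, extension over the interior with chiral Morse singularities absorbing the index deficit forced by Poincar\'e--Hopf (which is precisely the content of Lemma \ref{lemma:singular_contact_extension}, cited directly in the paper's proof, whose own proof rests on Lemma \ref{lemma:contact_extension} exactly as you describe), and the Reeb stability theorem for confoliations (Theorem \ref{thm:reeb_stability}) to rule out $g=0$. The only differences are cosmetic: you inline the extension lemma rather than citing it, and you insert a cancelling singularity pair for $g=1$ where the paper's construction would produce no singularities at all.
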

\begin{proof}
  By Proposition 1.3.13 of~\cite{eliashberg97}, for $g > 0$ there exists a plane field in a neighbourhood of $\partial H_g$ that is tangent to $\partial H_g$ and contact away from the boundary. By removing a smaller neighbourhood of the boundary, we have produced a contact structure near to the boundary of $H_g$. Using this plane field as $\xi_0$ in Lemma \ref{lemma:singular_contact_extension}, we can extend it to a singular plane field on all of $H_g$ with the desired properties. As we remarked in \S\ref{sec:local_properties}, when $g=0$ no such singular plane field can exist by the Reeb stability theorem for confoliations.
 \end{proof}
\noindent Note that Theorem \ref{thm:contact_tangent} remains true for a 3-manifold $M$ with multiple boundary components, provided that they are all surfaces of positive genus.
\begin{example}
  An example of this boundary behaviour in a neighbourhood $T^2 \times [0,1]$ of the boundary of $H_1$ is given by the 1-form
  \begin{equation}
    \eta = \cos(z)dz + \sin(z)(\cos(z)dx + \sin(z)dy)
  \end{equation}
  where $x,y$ are coordinates on $T^2$ and $z$ is the coordinate on $[0,1]$. The plane field this 1-form defines is tangent to $T^2 \times 0$ and contact everywhere else.
\end{example}
\noindent Theorem \ref{thm:contact_tangent} applies to experiments on handlebody droplets with normal anchoring~\cite{ellis18,musevic17,pairam13}, and shows that for such systems the liquid crystal can be twisted with a single handedness everywhere in the interior, provided the genus is positive. On the other hand, for spherical droplets (or more generally domains with a spherical boundary component) there must always be a region with the wrong/opposite handedness near the boundary~\cite{pollard19,ackerman2016}. Note that the theorem does not prescribe the geometry of the region of the region of reversed twist, only its existence. Typically, cholesteric droplets exhibit boundary defects of Morse index 0, which lie on the interface between regions of opposite handedness. The region of reverded twist is then spherical; toroidal regions of reversed handedness without the presence of boundary defects have also be observed~\cite{posnjak17,pollard19}.

\subsubsection{Planar Anchoring}
The other boundary condition commonly imposed in experiments is planar anchoring, when the director is tangent to the boundary, which is also the natural boundary condition for a Beltrami field describing a fluid flow in a bounded domain. For a boundary of genus $g \neq 1$ this will necessarily result in singularities on the boundary.

To understand this situation, we must study singular contact structures whose Reeb-like fields are tangent to some surface $S$. We will use techniques from contact topology that analyse contact structures in a neighbourhood of a surface. Let $S$ be an oriented surface embedded in a contact manifold $M$ with contact structure $\xi$. The \textit{characteristic foliation} on $S$ is the singular foliation of $S$ defined by $\xi_S = TS \cap \xi$. The characteristic foliation on $S$ determines the contact structure in a neighbourhood of $S$ up to homotopy~\cite{geiges08}.

Choose an area form $\Omega$ on $S$ and consider a neighbourhood $S \times [-1,1]$. Let $\mc{F}$ be a foliation on $S$ with isolated singularities of winding $\pm 1$, directed by a vector field $X$. Let $\Sigma = \{p_1, \dots p_n \}$ denote the set of singular points of $X$. We can decompose $\Sigma = \Sigma^0 \cup \Sigma^1$, where $\Sigma^0$ consists of those points in $\Sigma$ for which the divergence of $X$ with respect to $\Omega$ vanishes, and $\Sigma^1$ contains the remainder of the singular points. Note that the vanishing of the divergence is independent of the choice of area form $\Omega$ and the choice of the vector field $X$ directing the foliation, and therefore this decomposition is also independent of the various choices. A foliation $\mc{F}$ can be realised as the characteristic foliation induced on $S$ by a contact structure if and only if the set $\Sigma^0$ is empty~\cite{geiges08}.

The following lemma gives a construction of singular contact structures in a neighbourhood of a surface. The proof is an adaptation of the standard construction of a contact form inducing a particular characteristic foliation on a surface.
\begin{lemma} \label{lemma:singular_contact_construction}
  Let $X$ be a vector field with singularity set $\Sigma$. Suppose there exists a vector field $Y$ transverse to $X$ except at the singular points such that the divergence of $Y$ is nonzero on $\Sigma^0$. There exists a singular contact form $\eta$ on $S \times [-1,1]$ such that the characteristic foliation induced by $\eta$ on $S \times 0$ is directed by $X$ and the singular points of $\eta$ occur only at the points of $\Sigma^0 \subset S \times 0$.
\end{lemma}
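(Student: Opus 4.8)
The plan is to adapt the standard construction of a contact form inducing a prescribed characteristic foliation, allowing it to degenerate precisely over $\Sigma^0$. Write $z$ for the coordinate on $[-1,1]$ and set $\alpha = \iota_X\Omega$ and $\gamma = \iota_Y\Omega$; these satisfy $d_S\alpha = (\operatorname{div}_\Omega X)\,\Omega$, $d_S\gamma = (\operatorname{div}_\Omega Y)\,\Omega$, and $\alpha\wedge\gamma = \Omega(X,Y)\,\Omega$, where $d_S$ is the exterior derivative along $S$. I would look for $\eta$ of the form $\eta = \beta + u\,dz$, with $\beta$ a ($z$-dependent) 1-form on $S$ and $u$ a function. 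A direct computation gives
\begin{equation}
  \eta\wedge d\eta = \bigl[\beta\wedge d_S u - \beta\wedge\partial_z\beta + u\,d_S\beta\bigr]\wedge dz,
\end{equation}
and the characteristic foliation on $S\times 0$ is directed by $X$ precisely when $\beta|_{z=0}$ is a positive multiple of $\alpha$; I would normalise $\beta|_{z=0}=\alpha$. Since $\alpha$ vanishes exactly on $\Sigma$, the zeros of $\eta$ are confined to $\Sigma\times[-1,1]$, and the whole problem is to arrange that the bracket above is a positive multiple of $\Omega$ off $\Sigma^0\times\{0\}$ while $\eta$ itself vanishes only on $\Sigma^0\times\{0\}$.

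Away from $\Sigma^0$ this is the classical situation. On the complement of small balls around the points of $\Sigma^0$ the only zeros of $X$ are those of $\Sigma^1$, at which $\operatorname{div}_\Omega X\neq 0$; the standard realisation (which works exactly when $\Sigma^0=\varnothing$) then produces a nonsingular contact form $\eta_{\mathrm{std}}=\alpha + u\,dz$, the term $u\,d_S\beta = u(\operatorname{div}_\Omega X)\Omega$ keeping $\eta\wedge d\eta>0$ and $\eta\neq 0$ across $\Sigma^1$ once $u$ is chosen with the sign of $\operatorname{div}_\Omega X$ there. The heart of the argument is the local model near a point $p\in\Sigma^0$, where both $\alpha$ and $\operatorname{div}_\Omega X$ vanish and the classical construction fails. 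Here I would use $Y$ to tilt the plane field in the $z$-direction, taking
\begin{equation}
  \eta = \alpha + a\,z\,\gamma + b\,z\,dz,
\end{equation}
with constants $a,b\neq 0$. The formula above gives $\eta\wedge d\eta = \Psi\,\Omega\wedge dz$ with
\begin{equation}
  \Psi = -a\,\Omega(X,Y) + b\,z\,\operatorname{div}_\Omega X + ab\,z^2\operatorname{div}_\Omega Y.
\end{equation}
On $S\times 0$ the transversality of $Y$ to $X$ makes $\Omega(X,Y)$ of constant, nonzero sign off $p$, so choosing $\operatorname{sgn}(a) = -\operatorname{sgn}\Omega(X,Y)$ renders the first term strictly positive there; along the $z$-axis over $p$ the first two terms vanish and the third is $ab\,z^2\operatorname{div}_\Omega Y(p)$, which the hypothesis $\operatorname{div}_\Omega Y(p)\neq 0$ lets me make positive by choosing $\operatorname{sgn}(b)$. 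One checks that $\eta$ vanishes only at $(p,0)$: its $S$-part vanishes on the curve $X+az\,Y=0$ through $(p,0)$, on which the coefficient $b\,z$ of $dz$ is nonzero except at $z=0$.

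The main obstacle I anticipate is verifying that $\Psi>0$ on a full punctured neighbourhood of $(p,0)$, not merely along the two axes, since the cross term $b\,z\operatorname{div}_\Omega X$ has indefinite sign. The key is an anisotropic estimate: with $X$ of winding $\pm1$ one has $\Omega(X,Y),\ \operatorname{div}_\Omega X = O(|x|)$ and $\operatorname{div}_\Omega Y = O(1)$, so balancing the two positive terms under the scaling $|x|\sim z^2$ forces the cross term to be $O(z^3)$ and hence subdominant; a short case analysis over the regions $|x|\gtrsim z^2$ and $|x|\lesssim z^2$ (treating separately the exceptional direction where the leading part of $\Omega(X,Y)$ degenerates to order $|x|^2$, where instead the $z^2$-term dominates) then yields $\Psi>0$ throughout. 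Finally I would assemble the global form: the singular models on disjoint balls around the points of $\Sigma^0$ agree with $\eta_{\mathrm{std}}$ in inducing the foliation $\ker\alpha$ on the $z=0$ slice, and on the intervening nonsingular collars the two are interpolated by a cutoff; since being contact is an open condition and the models can be taken $C^1$-close there (by shrinking $a,b$), the interpolation remains contact, producing a singular contact form on $S\times[-1,1]$ with singular set exactly $\Sigma^0\times\{0\}$ and characteristic foliation directed by $X$ on $S\times 0$.
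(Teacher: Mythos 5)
The gap is at the step you yourself flag as the main obstacle, namely positivity of $\Psi$ on a punctured neighbourhood of $(p,0)$, and the anisotropic estimate you sketch cannot close it. First, the vanishing orders are wrong: since $X$ has winding $\pm 1$ at $p\in\Sigma^0$, any $Y$ transverse to $X$ on a punctured neighbourhood of $p$ must itself vanish at $p$ (on a small circle about $p$ the relative angle of $X$ and $Y$ defines a degree $\pm1$ map to the circle, so the fields would be parallel somewhere otherwise), whence $\Omega(X,Y)=O(r^2)$ rather than $O(r)$, with $r$ the distance to $p$. Second, and crucially, the hypotheses give no \emph{lower} bound on $\Omega(X,Y)$: it may degenerate to higher order along exceptional directions while $\operatorname{div}(X)$ vanishes only to first order, and your absorption of the cross term $bz\operatorname{div}(X)$ into the two positive terms requires precisely $(\operatorname{div} X)^2\lesssim\Omega(X,Y)$ near $p$. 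This can fail. Take $\Omega = dx\wedge dy$ and
\begin{equation*}
  X = (y+x^3)\,\partial_x + (-x+xy)\,\partial_y, \qquad Y = -y\,\partial_x + (x+y)\,\partial_y,
\end{equation*}
so that $\Omega(X,Y) = (1+x)y^2 + x^3y + x^4$ is positive away from the origin (transversality holds), $\operatorname{div}(X) = x+3x^2$ vanishes at the origin (so $0\in\Sigma^0$), and $\operatorname{div}(Y)=1$. With your sign choices, along $y=0$ one has
\begin{equation*}
  \Psi(x,0,z) = |a|\,x^4 - |b|\,z\,(x+3x^2) + |a||b|\,z^2,
\end{equation*}
whose minimum over $z$, attained at $z=(x+3x^2)/(2|a|)$, equals $|a|x^4 - |b|(x+3x^2)^2/(4|a|)$, and this is negative for all small $x\neq 0$, for \emph{every} choice of constants $a,b\neq 0$; moreover the minimising points converge to $(p,0)$. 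So your local model is never a positive singular contact form on any punctured neighbourhood of $(p,0)$, and the final glueing (which would itself need an argument: convex combinations of contact forms inducing the same characteristic foliation need not be contact, and shrinking $a,b$ makes your model $C^1$-close to the non-contact form $\alpha$, not to $\eta_{\mathrm{std}}$) never gets started.

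For comparison, the paper does not patch local models at all: it uses the single global ansatz $X_z = X - zY$, $u_z = \operatorname{div}(X_z)$, $\beta_z = \iota_X\Omega + z(du_z - \iota_Y\Omega)$, $\eta = \beta_z + u_z\,dz$, for which $(\eta\wedge d\eta)|_{z=0} = \bigl(u_0^2\,\Omega + \iota_X\Omega\wedge\iota_Y\Omega\bigr)\wedge dz$; keeping $\operatorname{div}(X)$ in the $dz$-coefficient (which your model discards) makes the $z=0$ slice nonnegative with zeros exactly on $\Sigma^0$, handling $\Sigma^0$, $\Sigma^1$ and regular points uniformly, with no case analysis or cutoff. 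You should know, however, that the difficulty you identified is genuine and is also the point the paper's own proof passes over, since it only computes on the slice $z=0$: on the example above the paper's form satisfies $\eta\wedge d\eta<0$ at the points $(x,x^2,x+3x^2)$ for small $x>0$, the culprit being the term $z\,du_z\wedge\iota_Y\Omega = -zy(1+6x)\,\Omega$. A complete argument must either constrain the pair $(X,Y)$ so that $\Omega(X,Y)$ is comparable to $r^2$ near each point of $\Sigma^0$ (as for a linear rotation paired with the radial field, which is the paper's Morse example), or design the $z$-dependence to dominate these cross terms; neither your construction as written, nor a cutoff-glueing of it to the standard one, achieves this.
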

\begin{proof}
  Let $z$ be the coordinate on $[-1,1]$. Define $\beta = \iota_X \Omega$. Then $d\beta = \mc{L}_X \Omega = \text{div}(X)\Omega$. By assumption there is a 1-form $\gamma = \iota_Y \Omega$ such that $\beta \wedge \gamma \ge 0$, with equality only at the points where $X = 0$. Let $X_z = X - zY$, $u_z = \text{div}(X_z)$ be 1-parameter families of vector fields and functions respectively.

  Define $\beta_z = \beta + z(du_z - \gamma)$ and $\eta = \beta_z + u_zdz$. We compute that $(\eta \wedge d\eta)|_{z=0} = (u^2_z \Omega + \beta \wedge \gamma) \wedge dz \ge 0$, with equality at the points where both $\beta$ and $u$ vanish. It follows that, in a neighbourhood of $S \times 0$, $\eta \wedge d\eta = 0$ only at the points $\Sigma^0$. At such points $\eta$ itself vanishes, so these are the singularties of $\eta$. It follows that $\eta$ is a positive singular contact form.
 \end{proof}
\noindent The assmptions of the lemma hold when, for example, the Jacobian matrix of $X$ is nondegenerate at the singular points, the generic situation. By taking $u_z = \text{div}(X + zY)$, $\beta_z = \beta - z(du_z - \gamma)$, and $\eta = \beta_z - u_zdz$ instead, we produce a negative singular contact form.

\begin{example}
  The lemma offers an alternative strategy for constructing the contact jets of the chiral Morse singularities. Consider the manifold $M = \mbb{R}^2 \times [-1,1]$ with area form $\Omega = dx \wedge dy$ on $\mbb{R}^2$ and the vector field $X = y\partial_x -x\partial_y$ on the set $z=0$. Define $\beta = \iota_X \Omega = xdx + ydy$ and $\gamma = xdy - ydx$, which satisfies $\beta \wedge \gamma = (x^2+y^2)dx \wedge dy$. We have $Y = x\partial_x + y\partial_y$. The divergence of $X$ vanishes but the divergence of $Y$ is equal to $2$. Set $X_z = X - zY$ and $u_z = -2z$. Following the rest of the construction, we have $\beta_z = \beta - z\gamma = (x+yz)dx + (y-xz)dy$ and $\eta = \beta_z -2zdz$. We have rediscovered the normal form of the Morse index 1 singularity given in Example \ref{ex:morse_zero}.
\end{example}
\noindent As a counterpart to the result that the divergence does not vanishing at any singular point of the characteristic foliation induced by a contact structure, we have the following lemma about the characteristic foliations induced by singular contact structures.
\begin{lemma} \label{lemma:characteristic_foliation_characterisation}
  Let $X$ be a vector field on an oriented surface $S$ with singular set $\Sigma$. The following are equivalent:
  \begin{enumerate}
    \item There exists a singular contact form $\eta$ on $S \times [-1,1]$ with Reeb-like field $R$ that is tangent to $S \times 0$, has singularities only on $S \times 0$ that coincide with with singularities of $X$, and induces a characteristic foliation on $S \times 0$ directed by $X$.
    \item $X$ is divergence-free for some area form $\Omega$.
  \end{enumerate}
\end{lemma}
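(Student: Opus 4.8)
The plan is to prove the two implications separately, with the bridge in both directions being the observation that, along $S \times 0$, the component of a Reeb-like field normal to the surface is proportional to the divergence of $X$.

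For $(2) \Rightarrow (1)$ I would run the construction of Lemma \ref{lemma:singular_contact_construction}. Given an area form $\Omega$ with $\text{div}_\Omega(X) = 0$, the $1$-form $\beta = \iota_X \Omega$ is closed, so $d\beta = \mc{L}_X \Omega = 0$ and the function $u_0 = \text{div}(X)$ vanishes. Choosing a vector field $Y$ transverse to $X$ off $\Sigma$ with $\text{div}(Y) \neq 0$ on $\Sigma$ -- possible because the winding-$\pm 1$ singularities are nondegenerate, and here $\Sigma^0 = \Sigma$ since $\text{div}_\Omega(X) \equiv 0$ -- the lemma produces a singular contact form $\eta = \beta_z + u_z\, dz$ whose singular set is $\Sigma \subset S \times 0$ and whose characteristic foliation on $S \times 0$ is $\ker \beta = \langle X \rangle$. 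The remaining point is to check that the Reeb-like field is tangent to $S \times 0$: expanding $d\eta$ in coordinates, the $\partial_z$-component of its kernel is proportional to the $dx \wedge dy$-coefficient of $d\eta$, and at $z=0$ this coefficient is exactly $d_S \beta = \text{div}_\Omega(X)\, \Omega$. Since $\text{div}_\Omega(X) = 0$, this component vanishes along $S \times 0$, so $R$ is tangent there, establishing $(1)$.

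For $(1) \Rightarrow (2)$ the starting point is that $R$ is Reeb-like, $\iota_R d\eta = 0$, and tangent to $S \times 0$. Pulling back along the inclusion $j : S \times 0 \hookrightarrow S \times [-1,1]$ and using tangency gives $\iota_R (j^* d\eta) = j^*(\iota_R d\eta) = 0$. Because the singularities of $\eta$, hence of its Reeb-like fields, coincide with $\Sigma$, the field $R$ is nonvanishing off $\Sigma$; a nowhere-zero vector in the kernel of a $2$-form on a surface forces that $2$-form to vanish, so $j^* d\eta = 0$ off $\Sigma$ and therefore everywhere by continuity. Thus $\lambda := j^* \eta$ is a closed $1$-form vanishing exactly on $\Sigma$, with $\ker \lambda$ equal to the characteristic foliation $\langle X \rangle$.

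It then remains to manufacture an invariant area form for $X$ out of the closed form $\lambda$. Fixing any background area form $\Omega_0$, its $\lambda$-dual $X_0$ defined by $\iota_{X_0}\Omega_0 = \lambda$ satisfies $\mc{L}_{X_0}\Omega_0 = d\lambda = 0$, so $X_0$ is divergence-free and also directs $\ker \lambda$; writing $X = \phi X_0$ and setting $\Omega = \phi^{-1}\Omega_0$ yields $\iota_X \Omega = \lambda$, whence $\text{div}_\Omega(X) = 0$. I expect the main obstacle to be precisely the verification that $\phi$ is smooth and nowhere vanishing across $\Sigma$, so that $\Omega$ is a genuine global area form. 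Away from $\Sigma$ this is immediate, since $X$ and $X_0$ are nonzero and parallel; at each point of $\Sigma$ it should follow from the fact that $X$ and $X_0$ direct the same closed-form foliation with the same nondegenerate (winding $\pm 1$) singularity, forcing their linearisations to be proportional with nonzero ratio. This nonvanishing of $\phi$, equivalently the extension of the invariant area form over the singular points, is the delicate step; by contrast the closedness of $\lambda$ and the divergence computation are the conceptual core and are comparatively routine.
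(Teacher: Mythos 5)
Your proposal is correct, and while your $(2) \Rightarrow (1)$ direction is essentially the paper's, your $(1) \Rightarrow (2)$ direction takes a genuinely different route. For sufficiency, both you and the paper run Lemma \ref{lemma:singular_contact_construction}; the paper checks tangency of the Reeb-like field through the component equations (\ref{eq:reeb_conditions}), using that $u_z$ and its spatial derivatives vanish along $S \times 0$, while you note that the $\partial_z$-component of $\ker d\eta$ is proportional to the $dx \wedge dy$-coefficient of $d\eta$, which at $z=0$ equals $d_S \beta = \text{div}(X)\,\Omega = 0$ -- the same computation, organised more transparently. For necessity, the paper works in coordinates: it writes $\eta = \beta_z + u_z\,dz$, derives (\ref{eq:reeb_conditions}), restricts to $z=0$, and concludes $\text{div}(X) = 0$ off $\Sigma$ and hence everywhere by continuity; your argument -- tangency gives $\iota_R(j^*d\eta) = 0$, a nonvanishing kernel vector kills a $2$-form on a surface, so $j^*\eta$ is closed and dualises to an invariant area form -- is coordinate-free and isolates the conceptual content, since the vanishing of the paper's $\text{div}(X_z)\Omega$ coefficient at $z=0$ is precisely the statement $d(j^*\eta) = 0$. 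Finally, the ``delicate step'' you flag (extending $\phi$, where $X = \phi X_0$, smoothly and without zeros across $\Sigma$) is not a defect introduced by your route: the paper's identity $d\eta = \text{div}(X_z)\Omega + \cdots$ holds only when $X_z$ is the $\Omega$-dual of $\beta_z$, so read literally the paper proves divergence-freeness of that dual field and silently identifies it with the given directing field $X$; transferring the conclusion to an arbitrary $X$ requires exactly your rescaling $\Omega = \phi^{-1}\Omega_0$. Your sketch for closing this is the right one for the winding $\pm 1$ singularities in play: two nondegenerate linearisations $A_0$, $A$ spanning the same line field satisfy $A_0 A^{-1}v \parallel v$ for every $v$, hence $A_0 = cA$ with $c \neq 0$, giving the continuous nonvanishing extension of $\phi$, with smoothness following from nondegeneracy by a Hadamard-type division argument.
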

\begin{proof}
  Fix an area form $\Omega$. First we will show that $\text{div}(X) = 0$ is a necessary assumption. Suppose we have a singular contact form $\eta$ with Reeb-like field $R$ that is tangent to $S$. In a neighbourhood of $S$ that is diffeomorphic to $S \times [-1,1]$ we can write $\eta = \beta_z + u_z dz$, where $\beta_z$ is a family of 1-forms on $S$ and $u_z : S \to \mbb{R}$ a family of functions. We can similarly write $R = R_S + R_z \partial_z$, where $R_S(z)$ is a vector field tangent to $S \times z$, and $R_z = 0$ at $z=0$. We compute
  \begin{equation}
    d\eta = \text{div}(X_z)\Omega + dz \wedge \dot{\beta}_z + du_z \wedge dz,
  \end{equation}
  where $X_z$ directs the characteristic foliation on $S \times z$, and $\dot{\beta}_z$ denotes the $z$ derivative of the family $\beta_z$. The Reeb-like field satisfies $\iota_R d\eta = 0$, that is
  \begin{equation}
    0 = \text{div}(X_z)\iota_{R_S}\Omega + R_z \dot{\beta}_z + (du(R) - \dot{\beta}_z(R))dz - R_z du_z.
  \end{equation}
  Choose local coordinates $x,y$ on $S$, and write $R_S = R_x \partial_x + R_y \partial_y$. Then the Reeb-like field satisfies the following three equations on $S \times [-1,1]$
  \begin{equation} \label{eq:reeb_conditions}
    \begin{aligned}
      \text{div}(X_z)R_y = R_z(\partial_x u_z + X_y), \\
      \text{div}(X_z)R_x = R_z(X_x -\partial_y u_z), \\
      du_z(R) = \dot{\beta}_z(R).
    \end{aligned}
  \end{equation}
  Restricting to $z=0$ where $R_z$ vanishes, we see at each point on $S \times 0$ we must have either $R_y = R_x = 0$ or $\text{div}(X) = 0$. If $R_x, R_y$ are both zero then we are at a singular point, and these are isolated. It follows that we need $\text{div}(X) = 0$ everywhere except at a finite set of points, however continuity then implies it must vanish at those points as well.

  Conversely, given a divergence-free $X$ we can construct a singular contact form $\eta$ on $S \times [-1,1]$ inducing that characteristic foliation on $S \times 0$ using Lemma \ref{lemma:singular_contact_construction}. The Reeb-like fields satisfy (\ref{eq:reeb_conditions}). Since the divergence of $X$ vanishes, at each point of $S \times 0$ we either have $R_z = 0$ or $\partial_x u_z + X_y = X_x - \partial_y u_z = 0$. The construction given in Lemma \ref{lemma:singular_contact_construction} is such that $u_z$ and it derivatives vanish along $S \times 0$. Therefore, we have $R_z = 0$ away from the singular points of $X$, and since these are isolated we again conclude that $R_z=0$ everywhere on $S \times 0$, i.e. $R$ is tangent to $S \times 0$.
 \end{proof}

\begin{theorem}
  For any $g \ge 0$, there exists a singular contact structure on $H_g$ whose Reeb-like fields are tangent to the boundary.
\end{theorem}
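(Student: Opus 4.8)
The plan is to build the singular contact structure in two pieces: a collar of the boundary on which the Reeb-like field is tangent to $\partial H_g$, and the remaining interior, which I fill in using the extension results of the previous subsections. By Lemma~\ref{lemma:characteristic_foliation_characterisation} the tangential boundary condition is governed entirely by the existence of a divergence-free vector field on $\partial H_g$, so the first step is to produce one.

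First I would fix an area form $\Omega$ on the genus-$g$ surface $S = \partial H_g$ and construct a divergence-free vector field $X$ with isolated, nondegenerate zeros. Since $\iota_X \Omega$ is closed precisely when $\text{div}(X) = 0$, such fields are dual to closed $1$-forms, and a Morse closed $1$-form on $S$ supplies one for every $g$: for $g = 1$ one may take a nonvanishing constant field, while for $g \neq 1$ zeros are unavoidable and sum in index to $\chi(S) = 2 - 2g$ by Poincar\'e--Hopf; for $g = 0$ the rotation field of $S^2$ (two centres) is divergence-free and serves. Because these fields have nondegenerate singularities, the hypotheses of Lemma~\ref{lemma:singular_contact_construction} are met --- a transverse $Y$ with $\text{div}(Y) \neq 0$ on $\Sigma^0$ exists --- and the converse direction of Lemma~\ref{lemma:characteristic_foliation_characterisation} yields a singular contact form $\eta$ on a collar $S \times [0,1]$ whose Reeb-like field $R$ is tangent to $S \times 0 = \partial H_g$, with all singularities confined to $\partial H_g$.

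Next I would cut along $S \times \{1\}$, writing $H_g = (S \times [0,1]) \cup M'$ with $M' \cong H_g$ a handlebody on whose boundary the collar already supplies a genuine (nonsingular) contact structure, since the singularities of $\eta$ all sit at $z = 0$. The task is then to extend this over $M'$ as a singular contact structure matching the collar. Because the construction makes $u_z = -z\,\text{div}(Y)$ nonzero for $z > 0$, the Reeb-like field acquires a $\partial_z$-component and is transverse to $S \times \{1\}$; one then applies Lemma~\ref{lemma:singular_contact_extension} (reversing signs if $R$ points inward) to $M'$. This extends the contact structure over the interior while inserting interior singularities of total index $\tfrac12 \chi(\partial M') = 1 - g = \chi(H_g)$, and gluing the collar back gives a singular contact structure on all of $H_g$ with $R$ tangent to the boundary. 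For $g = 1$ the interior deficit is $0$ and the collar is nonsingular, so one inserts a cancelling pair of Morse singularities to satisfy the convention that a singular contact structure carries at least one singularity.

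The main obstacle is the transversality and index bookkeeping at the cut $S \times \{1\}$: I must confirm that the Reeb-like field produced by Lemma~\ref{lemma:singular_contact_construction} is \emph{everywhere} transverse to the inner boundary, in particular at the points lying directly above the boundary singularities, where $X$ vanishes but $Y$ and $\text{div}(Y)$ do not, so that Lemma~\ref{lemma:singular_contact_extension} genuinely applies; and that the degree of the associated Gauss map is cancelled by exactly the $1 - g$ interior singularities demanded by the Poincar\'e--Hopf count for a field tangent to $\partial H_g$. Should transversality fail at isolated points, a small homotopy of $\eta$ supported near $z = 1$ and away from the singular set restores it without disturbing either the boundary behaviour or the contact condition.
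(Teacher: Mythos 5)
Your proof follows the same route as the paper's own: a divergence-free field on $\partial H_g$ (the paper uses the Hamiltonian field of a Morse function, $\iota_X \Omega = d\phi$; your closed Morse $1$-forms and the rotation field on $S^2$ work equally well), then Lemma~\ref{lemma:characteristic_foliation_characterisation} to produce the collar with tangent Reeb-like field and singularities only on $\partial H_g$, then an extension over the interior via Lemma~\ref{lemma:singular_contact_extension}. The bookkeeping discrepancy with the paper --- you insert interior singularities of total index $1-g$, while the paper asserts that none are needed --- is not substantive: the interior index required is whatever cancels the degree of the plane field along the cut surface, and that degree depends on the signs of $\mathrm{div}(Y)$ chosen at the zeros of $X$ in Lemma~\ref{lemma:singular_contact_construction} (a nondegenerate zero of $X$ of planar index $\iota$ becomes a $3$-dimensional singularity of index $-\iota\, \sgn(\mathrm{div}\,Y)$, so either count can be arranged). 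Since the technique of Lemma~\ref{lemma:singular_contact_extension} can absorb any total index, as the paper remarks after that lemma, both versions of the bookkeeping close up.

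The step that does not survive scrutiny is your repair of the transversality problem. A Reeb-like field can never be made transverse to a closed surface by any perturbation, small or large: if $\iota_R d\eta = 0$ and $R$ were transverse to $S' = S \times \{1\}$ at every point, then $\ker d\eta$ would meet $TS'$ trivially, so $d\eta$ would restrict to an area form on $S'$, contradicting $\int_{S'} d\eta = 0$, which holds by Stokes' theorem because $d\eta$ is exact. Moreover the tangency locus is generically a collection of curves (a dividing set), not isolated points, so neither your transversality claim above the boundary singularities nor the proposed small homotopy can be salvaged. This is really a defect in the literal phrasing of Lemma~\ref{lemma:singular_contact_extension}, which the paper's own proof glosses over in exactly the same way; what the extension step genuinely uses is not transversality of $R$ but only that the plane field near $\partial M'$ has degree zero in the sense of Lemma~\ref{lemma:pontryagin-thom}. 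The correct way to finish --- for your proof and for the paper's --- is to compute that degree, cancel it by inserting chiral Morse singularities in interior balls, and then apply Lemma~\ref{lemma:contact_extension}. With your final paragraph replaced by this degree argument, the proof is complete and coincides with the paper's.
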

\begin{proof}
  Choose a symplectic form $\Omega$ and a Morse function $\phi$ on $\partial H_g$. Define $X$ by $\iota_X \Omega = d\phi$. Then $\text{div}(X) = d\iota_X \Omega = 0$. By Lemma \ref{lemma:characteristic_foliation_characterisation} there is a singular contact structure in a neighbourhood of the boundary that only has singularities only on $\partial H_g$. We extend over the interior as in Lemma \ref{lemma:singular_contact_extension}. No additional singularities are required in the interior.
 \end{proof}
\noindent This theorem, and the construction methods given in the preceeding two lemmas, can be immediately applied to the study of defects in cholesteric liquid crystals shells, where the material domain is $S^2 \times [0,1]$ and we impose tangential anchoring on the boundary~\cite{darmon16}. The problem of determining the possible directors in such a shell with defects only on the boundary reduces to the study of families of foliations $\mc{F}_t$, $t \in [0,1]$, on the 2-sphere, where for $t=0,1$ the foliation must be directed by a divergence-free vector field, and for $t \in (0,1)$ the divergence of a vector field directing the characteristic foliation must not vanish at any singular points.

Lemma \ref{lemma:characteristic_foliation_characterisation} also leads to the following proposition that shows we may modify a contact form in the neighbourhood of a surface so as to introduce singularities on that surface, and moreover that we can do it in such a way that the Reeb-like fields are tangent to the surface. First, we recall another notion from contact topology. A vector field is called a \textit{contact vector field} if it preserves the contact structure. A surface is called \textit{convex} if there is a contact vector field transverse to it. Convex surfaces are generic: given an embedded surface $S$, there is a convex surface $S^\prime$ close to $S$~\cite{geiges08}. Moreover, the characteristic foliation induced on a convex surface is always directed by a Morse--Smale vector field, and therefore has nondegenerate singular points. The following result is also proved in~\cite{miranda18}.
\begin{proposition}
  Let $\eta$ be a contact structure on $M$ and $S \subset M$ a closed surface. There is a surface $S^\prime$ close to $S$ and a singular contact form on $M$ that agrees with $\eta$ outside of an open neighbourhood of $S^\prime$ and has Reeb-like fields tangent to $S^\prime$.
\end{proposition}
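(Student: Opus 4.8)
The plan is to reduce everything to the surface theory already developed, replacing $S$ by a convex surface and then invoking Lemmas \ref{lemma:singular_contact_construction} and \ref{lemma:characteristic_foliation_characterisation}. First I would perturb $S$ to a nearby convex surface $S'$, which is possible by the genericity of convex surfaces recalled above. Fix a tubular neighbourhood identified with $S' \times [-1,1]$, with $S' = S' \times \{0\}$, on which $\eta$ is defined. Since $S'$ is convex, the characteristic foliation $\xi_{S'} = TS' \cap \xi$ is directed by a Morse--Smale vector field $X_0$ with nondegenerate singular points; moreover, because it is the characteristic foliation of a genuine contact structure, the divergence of $X_0$ is nonzero at every singular point, i.e. $\Sigma^0 = \emptyset$.

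Next I would produce a divergence-free model of this foliation. Choosing a Morse function $\phi$ and area form $\Omega$ on $S'$ whose critical points and indices match the singular points and windings of $X_0$ --- centres (winding $+1$) at the extrema in place of the foci/nodes of $X_0$, and saddles (winding $-1$) at the saddles --- the Hamiltonian field $X$ defined by $\iota_X \Omega = d\phi$ satisfies $\mathrm{div}(X) = d\iota_X\Omega = 0$ identically and directs a foliation topologically equivalent to $\xi_{S'}$. Taking a transverse field $Y$ (for instance a gradient of $\phi$) whose divergence is nonzero at the singular points, the hypotheses of Lemma \ref{lemma:singular_contact_construction} hold, and Lemma \ref{lemma:characteristic_foliation_characterisation} then yields a singular contact form $\tilde\eta$ on $S' \times [-1,1]$ whose Reeb-like field is tangent to $S' \times 0$ and whose singularities lie only on $S' \times 0$. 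Note that the conversion of the foci/nodes of $X_0$ into centres is exactly what moves these singular points from $\Sigma^1$ into $\Sigma^0$, which is why the result is genuinely singular contact rather than contact.

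The remaining, and hardest, step is to arrange that $\tilde\eta$ agrees with $\eta$ outside an open neighbourhood of $S'$. For $z$ near $\pm 1$ the characteristic foliation of $\tilde\eta$ is directed by the field $X - zY$, which has nonzero divergence at its singular points and so is of genuine contact type, matching the kind of foliation that $\eta$ induces on the nearby slices. Using the fact (recalled above) that the germ of a contact structure near a surface is determined up to isotopy by its characteristic foliation, together with the flexibility of characteristic foliations on the convex surface $S'$ (Giroux flexibility) and a Gray-stability / Moser deformation supported in the neighbourhood, I would interpolate between $\tilde\eta$ near $z = \pm(1-\delta)$ and $\eta$, and then cut off so that the modified form equals $\eta$ on the complement of $S' \times (-1,1)$. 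I expect the principal difficulty to be precisely this interpolation: keeping the form a (singular) contact form throughout the transition region and matching $\eta$ \emph{exactly} on the outer boundary, rather than only up to homotopy. Convexity is what makes this feasible, since it supplies the Morse--Smale normal form needed both to build the divergence-free model and to control the characteristic foliations across the collar.
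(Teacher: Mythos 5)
Your setup (perturb to a convex surface, then apply Lemmas \ref{lemma:singular_contact_construction} and \ref{lemma:characteristic_foliation_characterisation}) follows the paper's strategy, but the step you flag as the ``principal difficulty'' is a genuine gap, and it is exactly the step the paper's construction is designed to eliminate. Because you discard the actual characteristic foliation of $\eta$ and replace it by an abstract Hamiltonian model ($X$ with $\iota_X\Omega = d\phi$, $Y$ a gradient), the singular contact form you build on $S^\prime \times [-1,1]$ has boundary characteristic foliations directed by $X \mp Y$, which bear no particular relation to the foliation $\eta$ induces near $S^\prime$: they merely have the same number and winding of singular points. Germ-determinacy of a contact structure by its characteristic foliation (Proposition \ref{prop:char_foliation_homotopy}) applies only when the two structures induce the \emph{same} foliation on the surface, so you cannot invoke it here, and the appeal to ``Giroux flexibility plus Gray stability'' to force exact agreement with $\eta$ on the outer boundary is precisely the unproved content. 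The paper avoids this by keeping the original data inside the construction: it takes $Y$ to be the field directing the characteristic foliation of $\eta$ on the convex surface $S^\prime$ (Morse--Smale, hence with nondegenerate singular points), chooses $X$ transverse to $Y$ away from the singular points with vanishing divergence there, and---this is the key trick---replaces the default interpolation $X_z = X - zY$ of Lemma \ref{lemma:singular_contact_construction} by $X_z = (1-z^2)X - zY$. At $z = \pm 1$ this foliation is directed by $\mp Y$, i.e.\ it coincides (up to orientation) with the one induced by $\eta$, so the germ-determinacy statement applies verbatim and the new form glues to $\eta$ outside the collar; at $z=0$ one still has $X_0 = X$, so Lemma \ref{lemma:characteristic_foliation_characterisation} still gives Reeb-like fields tangent to $S^\prime \times 0$.

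A second, smaller problem: your transverse field $Y = \nabla\phi$ need not satisfy the hypothesis of Lemma \ref{lemma:singular_contact_construction}. Since your $X$ is globally divergence-free, $\Sigma^0$ is \emph{all} of $\Sigma$, so you need $\mathrm{div}(Y) \neq 0$ at every singular point including the saddles; but for the standard Morse saddle $\phi = x^2 - y^2$ the Euclidean gradient has identically vanishing divergence. This is fixable (skew the saddle model or change the metric), but as written the lemma's hypotheses fail. Relatedly, the claim that the Hamiltonian foliation is ``topologically equivalent'' to $\xi_{S^\prime}$ is both unnecessary and false---closed orbits around centres are not topologically equivalent to nodes or foci---and only the positions and windings of the singular points enter the construction.
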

\begin{proof}
  Let $Y$ be a vector field directing the characteristic foliation induced on $S$ by $\eta$. By perturbing $S$ to a nearby convex surface $S^\prime$ if necessary, we may assume that the Jacobian of $Y$ is nonzero at the singular points of $Y$, a consequence of the fact that the characteristic foliation is Morse--Smale. There is a vector field $X$ that is transverse to $Y$ everywhere except at its singular points and which has vanishing divergence at the singular points. On a neighbourhood $S^\prime \times [-1,1]$ of $S^\prime$, take $X_z = (1-z^2)X - zY$ and construct a singular contact form on $S^\prime \times [-1,1]$ as in Lemma \ref{lemma:singular_contact_construction}. The resulting singular contact form is homotopic to $\eta$ near $S^\prime \times -1$ and $S^\prime \times +1$, and has Reeb-like fields tangent to $S^\prime \times 0$ by Lemma \ref{lemma:characteristic_foliation_characterisation}.
 \end{proof}

\subsection{Overtwistedness} \label{sec:overtwisted}
The dichotomy between tight and overtwisted contact structures is a central theme in contact topology. We now examine it in the context of singular structures. An embedded disk $D$ whose boundary is tangent to the contact structure is called an \textit{overtwisted disk} if the characteristic foliation induced on $D$ has a limit cycle. If a contact structure $\xi$ contains an overtwisted disk, then it is called \textit{overtwisted}, otherwise it is called \textit{tight}. Overtwisted contact structures satisfy an $h$-principle: two overtwisted contact structures are homotopic through contact structures if and only if they are homotopic as plane fields~\cite{eliashberg89}; therefore, subtleties in classification arise only for tight contact structures.

Let $\xi$ be a singular contact structure on $M$. Remove open balls of radius $\epsilon$ around each singularity to produce a contact structure $\xi_\epsilon$ on a new manifold $M_\epsilon$. We will say that $\xi$ is overtwisted if $\xi_\epsilon$ is overtwisted in the usual sense for all sufficiently small $\epsilon > 0$.

In contact structures overtwistedness is a property that cannot be determined by purely local considerations: the Darboux theorem implies that each point in a contact manifold has a coordinate neighbourhood on which the contact structure is defined by $\eta = dz + xdy$, which defines the standard tight contact structure on $\mbb{R}^3$. By contrast, in this section we show that singular contact structures with singularities of nonzero index are always overtwisted, and this follows from the local structure of the singularities. To prove this, we will use the theory of convex surfaces. A surface embedded in a manifold with a singular contact structure is called convex if it does not intersect any singular point and is convex in the usual sense for the contact structure defined on the complement of the singular points.

Every oriented convex surface $S$ with characteristic foliation defined by a vector field $X$ admits a \textit{dividing curve} $\Gamma$. This is a curve satisfying the following properties:
\begin{enumerate}
  \item $S-\Gamma = S^+ \cup S^-$,
  \item $\Gamma$ is transverse to the leaves of the characteristic foliation,
  \item There is an area from $\Omega$ on $S$ such that $\pm L_X \Omega > 0$ on $S^\pm$ and $X$ points transversally out of $S^+$ along $\Gamma$.
\end{enumerate}
The set $\Gamma$ consists of points for which the divergence of $X$ (with respect to some area form) vanishes. We can identify it with a set of points where the normal to the contact plane is tangent to the surface.
\begin{proposition} \label{prop:dividing_curve_normal}
  The same $\Gamma$ is isotopic to the set of points for which the normal to the contact planes (for some choice of metric) is tangent to $S$.
\end{proposition}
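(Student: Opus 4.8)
The plan is to reduce the statement to the standard theory of convex surfaces and then exploit the freedom in the choice of metric. Since a convex surface $S$ in a singular contact structure is by definition disjoint from the singular set and convex for the genuine contact structure on the complement, the usual convex-surface machinery applies verbatim on $S$ (see \cite{geiges08}). In particular there is a contact vector field $V$ transverse to $S$, and the dividing curve is the locus $\Gamma = \{\, p \in S : V_p \in \xi_p \,\} = \{\, p \in S : \eta_p(V_p) = 0 \,\}$, which is precisely the set characterised above by the vanishing of $\mathrm{div}(X)$.

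The key step is to pick the metric adapted to $V$ rather than an arbitrary one. For any Riemannian metric $g$ with unit normal $n$ to $S$, the unit normal $N$ to the contact planes satisfies $\eta = c\,\iota_N g$ for a nowhere-zero function $c$ on $S$, since both $\eta$ and $\iota_N g$ have kernel $\xi$; hence $N$ is tangent to $S$ at $p$ exactly when $g(N_p, n_p) = 0$, i.e. when $\eta_p(n_p) = 0$. I would now choose $g$ so that $n = V$ along $S$: because $V$ is transverse to $S$, its flow identifies a neighbourhood of $S$ with $S \times (-\epsilon,\epsilon)$ so that $V = \partial_t$, and taking $g = h + dt^2$ for any metric $h$ on the $S$-factor makes $V$ the unit normal to $S$; one then extends $g$ arbitrarily over the rest of $M$. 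With this metric $\{\, p \in S : N_p \in T_pS \,\} = \{\, \eta(V) = 0 \,\} = \Gamma$, so the two sets literally coincide and are in particular isotopic.

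The main conceptual point, and the reason the statement only asserts the existence of \emph{some} metric, is that this equality genuinely uses that $n = V$ is a contact vector field. For an arbitrary metric $\{\, \eta(n) = 0 \,\}$ need not even be transverse to the characteristic foliation: a local computation shows that transversality of this locus to the foliation amounts to the nonvanishing of a quantity equal to $\eta \wedge d\eta$ corrected by terms recording how $\eta$ varies off $S$, and these off-surface terms are not controlled by the contact condition alone. Choosing the metric adapted to the transverse contact field removes these terms and reduces everything to the defining property of $\Gamma$, so I expect the only real work to be the routine construction of the adapted metric together with the bookkeeping ensuring $N$ is defined and nonzero on $S$, which holds because $S$ avoids the singular set.
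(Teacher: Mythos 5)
Your proof is correct and follows essentially the same route as the paper's: both exploit convexity to pass to a vertically invariant neighbourhood (your flow coordinates for $V$ are exactly the paper's $S \times [-1,1]$ with $\eta = \iota_X \Omega + u\,dz$ and $V = \partial_z$), equip it with the product metric, and observe that the normal to the contact planes is tangent to $S$ precisely where $\eta(V) = u = 0$. The only cosmetic difference lies in the final identification with $\Gamma$: the paper modifies $u$ to $\pm 1$ on $S^\pm$ via the flexibility construction of Theorem 4.8.5 in Geiges, whereas you instead invoke the standard fact that Giroux's dividing set $\{\eta(V) = 0\}$ is isotopic to any curve satisfying the divergence characterisation -- both are routine pieces of convex surface theory, and your version has the mild advantage of never altering the contact form.
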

\begin{proof}
  Let $X$ direct the characteristic foliation on $S$, and let $\Omega$ be the area form implicit in the definition of the dividing curve, so that $\Gamma$ is the set of points where the divergence of $X$ vanishes. In a neighourhood $S \times [-1,1]$ of $S$ the contact structure is defined by a 1-form $\eta = \iota_X \Omega + udz$, where $z$ is the coordinate on $[-1,1]$ and $u : S \to \mbb{R}$ is a function which does not depend on $z$ -- this follows from the fact that $S$ is convex, see~\cite{geiges08}. The function $u$ must satisfy $u \,\text{div}(X) - X(u) > 0$. We can take $u=+1$ on $S^+$, $u=-1$ on $S^-$, and smoothly connect these functions in an annular neighbourhood of $\Gamma$ via the constructions used in~\cite[Theorem 4.8.5]{geiges08}. The function $u$ will then vanish along some curve which is isotopic to $\Gamma$, and this is exactly the set of points for which the normal to $\eta$ in the metric $dx^2 + dy^2+dz^2$ on $S \times [-1,1]$ is tangent to $S \times 0$.
 \end{proof}
\noindent The topology of the contact structure in a neighbourhood of $S$ is determined entirely by the dividing curve. In particular, the dividing curve can be used to determine the overtwistedness of the contact structure via the following theorem of Giroux~\cite{giroux01}.
\begin{theorem} (Giroux's Criterion)
  Let $S$ be a closed convex surface of genus $g$ in a contact 3-manifold with dividing curve $\Gamma$. A neighbourhood of $S$ is tight if and only if $g > 0$ and $\Gamma$ contains no contractible curves, or $g=0$ and $\Gamma$ is connected.
\end{theorem}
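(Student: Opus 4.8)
The plan is to convert the statement about tightness of a neighbourhood into a purely combinatorial statement about the isotopy type of $\Gamma$ on $S$, using \emph{Giroux's flexibility theorem}: the germ of the contact structure along a convex surface is determined, up to isotopy fixing $S$, by the dividing set alone, so that the characteristic foliation may be replaced by any foliation divided by $\Gamma$. This reduces the problem to deciding the tightness of the single $\mbb{R}$-invariant model on $S \times \mbb{R}$ with dividing set $\Gamma$, and to doing so in terms of $\Gamma \subset S$ only. I would then treat the two implications separately.

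For the \emph{only if} direction I would build an overtwisted disk whenever the criterion fails. The mechanism is the Legendrian realisation principle together with the Thurston--Bennequin estimate $tb(L) = -\tfrac12\,\#(L \cap \Gamma)$ for a Legendrian curve $L$ lying on a convex surface. When the criterion fails one can realise a homotopically trivial, non-isolating Legendrian curve $L$ that can be pushed off $\Gamma$, so that $tb(L) = 0$; since $L$ bounds a disk in $M$, Bennequin's inequality $tb(L) \le -1$ (valid in any tight structure for a Legendrian unknot) is violated, and a neighbourhood of $S$ is overtwisted. The non-isolating requirement is precisely what makes the two genus regimes differ: for $g > 0$ any contractible component of $\Gamma$ supplies such an $L$, while on $S^2$ every curve is contractible, so one needs $\Gamma$ disconnected in order for a realisable (non-isolating) unknot to exist; a single dividing circle on $S^2$ admits no such curve and the model stays tight.

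For the \emph{if} direction I would show the $\mbb{R}$-invariant model is tight under the stated hypotheses, exploiting the confoliation viewpoint of this paper. When $g > 0$ and $\Gamma$ has no contractible component, the dividing set is the cutting locus of a \emph{taut} foliation of $S \times \mbb{R}$ of which the model contact structure is a $C^0$-small perturbation; the absence of contractible dividing curves is exactly the condition that this foliation carry no Reeb component, hence be taut. Eliashberg \& Thurston's theorem that a contact structure sufficiently close to a taut foliation is (universally) tight~\cite{eliashberg97} then gives tightness. The case $S = S^2$ with $\Gamma$ connected is special, since $S^2$ admits no taut foliation, and here I would instead embed the model neighbourhood into a standard tight manifold -- the equatorial $S^2$ in tight $S^3$ has connected dividing set -- and invoke tightness of the ambient structure.

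The hard part is the \emph{if} direction: producing the taut foliation adapted to a given $\Gamma$, and controlling the $C^0$-smallness of the perturbation so that Eliashberg--Thurston applies, is where the genuine content lies, whereas the overtwisted direction is essentially mechanical once flexibility, the realisation principle, and Bennequin's inequality are available. The persistent secondary difficulty is the uniform separation of the $g = 0$ case throughout both directions, since on $S^2$ homotopical triviality is automatic and the relevant invariant is instead the number of components of $\Gamma$.
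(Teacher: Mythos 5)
First, a point of comparison: the paper does not prove this statement at all --- it is imported from Giroux~\cite{giroux01} as a black box, used only as an ingredient in the proof of Theorem~\ref{thm:singular_contact_structures_are_ot}. So your proposal can only be measured against the standard literature proof, and in architecture it follows it: reduction to the $\mbb{R}$-invariant model via Giroux flexibility, Legendrian realisation plus the Bennequin bound for the overtwisted direction, and tightness of the invariant model with essential dividing set for the converse. Your taut-foliation/Eliashberg--Thurston route for the tight direction is one of the accepted ways to do this, and your treatment of $S^2$ by embedding the connected-$\Gamma$ model as the standard convex sphere in tight $S^3$ is exactly the usual argument. Your use of Bennequin's inequality is also not circular: for tight structures it is Eliashberg's theorem, independent of Giroux's criterion.

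There is, however, a genuine gap in the overtwisted direction. Your claim that ``for $g > 0$ any contractible component of $\Gamma$ supplies such an $L$'' fails when $\Gamma$ consists of a \emph{single} contractible curve $\gamma$ bounding a disk $D$ in a positive-genus surface $S$ --- a perfectly legitimate dividing set, and one the theorem declares overtwisted. A pushoff $L$ of $\gamma$ outside $D$ is then isolating: the component of $S \setminus (\Gamma \cup L)$ on the far side of $L$ is all of $S$ minus a slightly larger disk, and its boundary is $L$ alone, disjoint from $\Gamma$, so the Legendrian realisation principle does not apply and you obtain no unknot with $\mathrm{tb}=0$. The standard repair (this is how Honda's account of Giroux's criterion handles it) needs an extra move you have not mentioned: first apply the realisation principle to an essential curve $c$ disjoint from $D$ that does not separate $S \setminus D$ (this curve \emph{is} non-isolating, since $S \setminus (\gamma \cup c)$ is the union of the interior of $D$ and a single connected piece whose boundary contains $\gamma$), obtaining a Legendrian curve of twisting zero; then \emph{fold} $S$ along $c$, a small isotopy that adds two parallel essential curves to the dividing set; after this the pushoff of $\gamma$ is non-isolating and your argument runs. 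A smaller issue, which you partly acknowledge: Eliashberg--Thurston tightness is stated for $C^2$ taut foliations on \emph{closed} manifolds, so for the invariant model on $S \times [-1,1]$ you must first double (or otherwise embed) into, say, $S \times S^1$, spin the product foliation along $\Gamma \times S^1$ --- where absence of contractible components of $\Gamma$ is indeed what rules out Reeb components --- and then use flexibility once more to identify the germ of the perturbed contact structure along $S$ with the invariant model.
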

\noindent Applying Giroux's Crierion and the structural result we proved in \S\ref{sec:local_properties}, we obtain the following result.
\begin{theorem} \label{thm:singular_contact_structures_are_ot}
  A singular contact structure $\xi$ with a singularity of nonzero index is always overtwisted.
\end{theorem}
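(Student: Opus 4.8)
The plan is to reduce the overtwistedness of the singular contact structure $\xi$ to a statement about a convex surface surrounding a single singularity of nonzero index, and then apply Giroux's Criterion. The starting observation is that overtwistedness of a singular contact structure is detected on the manifold $M_\epsilon$ obtained by removing small balls around the singular points, so it suffices to find a single overtwisted disk in $M_\epsilon$ for all sufficiently small $\epsilon$. The natural object to examine is a small convex sphere $S$ enclosing the singularity of nonzero index $k \neq 0$; I would show its dividing curve is forced by the index to be disconnected, which by Giroux's Criterion on a genus-$0$ surface immediately yields tightness failure (i.e. overtwistedness) in a neighbourhood of $S$, hence in $\xi$.

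The key steps, in order, are as follows. \textbf{First}, pick a singularity $p$ of index $k \neq 0$ and, using Proposition \ref{prop:only_gradients}, arrange that near $p$ the singular contact form has sufficient jet $d\phi$ for a chiral function $\phi$ of index $k$. \textbf{Second}, take a small sphere $S$ around $p$ (of radius between $\epsilon$ and the size of the contact region) and make it convex; its characteristic foliation is directed by a vector field $X$ obtained essentially by projecting the Reeb-like field, or equivalently the gradient $\nabla\phi$, onto $S$. \textbf{Third}, identify the dividing curve $\Gamma$ on $S$ using Proposition \ref{prop:dividing_curve_normal}: $\Gamma$ is isotopic to the set of points on $S$ where the normal to the contact planes is tangent to $S$, which for $\eta \approx d\phi$ is precisely the set where $\nabla\phi$ is tangent to $S$, i.e. where $S$ is tangent to a level set of $\phi$. \textbf{Fourth}, count the components of $\Gamma$ using Lemma \ref{lemma:level_set_structure}: the level-set structure of a singularity of index $k$ forces the nonsingular leaves intersected with the ball to consist of $|k|+1$ disks on one side, and the dividing curve separates the sphere into the regions $S^{\pm}$ where $\nabla\phi$ points outward versus inward. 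This should produce a dividing curve with at least $|k|+1 \ge 2$ components whenever $k \neq 0$. \textbf{Finally}, apply Giroux's Criterion: since $S$ has genus $0$ and $\Gamma$ is disconnected, a neighbourhood of $S$ is overtwisted, and therefore $\xi$ is overtwisted for all sufficiently small $\epsilon$.

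\textbf{The main obstacle} I anticipate is the fourth step: rigorously relating the number of components of the dividing curve $\Gamma$ to the index $k$ via the level-set structure of Lemma \ref{lemma:level_set_structure}. One must verify that the convex sphere can be chosen so that it meets the singular level set transversally in the expected way, that the tangency locus of $S$ with the level sets of $\phi$ genuinely computes the divergence-zero set of the characteristic foliation, and that these tangencies organise into the claimed number of curve components rather than collapsing or merging. The cleanest route is probably to choose $S$ to be a round sphere and $\phi$ to be in semiquasihomogeneous normal form, so that the tangency condition $\nabla\phi \parallel \nabla(\text{radius})$ becomes an explicit algebraic system whose solution set has the topology dictated by the $|k|+1$ cones of the singular level set; the sign of $\nabla\phi \cdot \partial_r$ on the complementary regions then fixes the decomposition $S = S^+ \cup S^-$, and the boundary $\Gamma$ between them inherits the predicted number of components. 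A secondary subtlety is ensuring the argument is robust as $\epsilon \to 0$, but since the germ and hence the local level-set structure is scale-invariant under the quasihomogeneous weighting, the dividing-curve count is stable under shrinking $\epsilon$, so overtwistedness holds for all sufficiently small $\epsilon$ as required.
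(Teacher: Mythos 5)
Your proposal follows essentially the same route as the paper's proof: a small convex sphere about the singularity, identification of the dividing curve with the tangency locus of $\nabla\phi$ via Proposition \ref{prop:dividing_curve_normal}, the component count $|k|+1$ from Lemma \ref{lemma:level_set_structure}, and Giroux's Criterion on a genus-zero surface. The one point to make explicit in your fourth step is that the compact (spherical) level-set case allowed by Lemma \ref{lemma:level_set_structure} must be excluded because $\phi$ is chiral (Corollary \ref{cor:no_spheres}); this is precisely how the paper guarantees the tangency locus is nonempty and has $|k|+1$ components.
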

\begin{proof}
  Choose a sphere centred at a singularity of index $k \neq 0$. After a small perturbation we can assume the sphere is convex, and hence that it admits a dividing curve $\Gamma$. Fix a Riemannian metric and consider the unit normal to the contact planes. If we are sufficiently close to the singularity, the unit normal is close to the vector field $\nabla \phi$, where $\phi$ is a function that describes the structure of the singularity according to Proposition \ref{prop:only_gradients}. By Lemma \ref{lemma:level_set_structure} and the fact that the level sets of $\phi$ are not spheres, the set of points where $\nabla \phi$ is tangent to $S$ has $|k| + 1$ components. By Proposition \ref{prop:dividing_curve_normal} this set is isotopic to the dividing curve $\Gamma$, and therefore $\Gamma$ has $|k|+1$ components. By Giroux's Criterion, the contact structure is therefore overtwisted.
 \end{proof}
\noindent Informally, we can identify the `arms' of the singular set of the function $\phi$ with tubes of overtwisted disks. Thus every singularity of index $k \neq 0$ sits at the confluence of $|k|+1$ tubes of overtwisted disks.

By way of an example, we apply Theorem \ref{thm:singular_contact_structures_are_ot} to the ABC fields. Contact topological arguments can be used to show that any nonsingular ABC field is transverse to a tight contact structure~\cite{etnyre00}; our result then allows us to describe the remainder of the space.
\begin{corollary}
  An ABC field is tight if and only if $B^2+C^2 \leq 1$.
\end{corollary}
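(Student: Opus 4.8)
The plan is to split into three regimes according to the sign of $B^2+C^2-1$, using the normalisation $1=A\ge B\ge C\ge 0$ and the description of the singularities recalled in the example above; the two strict inequalities are dispatched by results already in hand, and the entire difficulty is concentrated at the boundary value $B^2+C^2=1$. First I would handle the overtwisted direction. When $B^2+C^2>1$ the ABC field has Morse singularities, each of Morse index $1$ or $2$, hence of vector-field index $-1$ or $+1$ respectively, and in either case nonzero. The associated singular contact structure therefore possesses a singularity of nonzero index, so Theorem \ref{thm:singular_contact_structures_are_ot} immediately gives that it is overtwisted. This supplies one implication: tightness forces $B^2+C^2\le 1$. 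For the converse when $B^2+C^2<1$ the field is nonsingular, and I would simply invoke the contact-topological fact that a nonsingular ABC field is transverse to a \emph{tight} contact structure~\cite{etnyre00}, whence the structure is tight.

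The remaining, genuinely delicate, case is $B^2+C^2=1$. Here the field is singular but every singularity is of type $A_2$ with index $0$, so Theorem \ref{thm:singular_contact_structures_are_ot} gives no information and tightness must be argued directly. My plan is a stability (limiting) argument. By definition the singular structure is tight provided $\xi_\epsilon$, obtained by excising $\epsilon$-balls about the singular points, is tight for all small $\epsilon>0$. Suppose not: some $\xi_\epsilon$ contains an overtwisted disk $D$, a compact object lying at distance at least $\epsilon$ from the (finite) singular set. As $B^2+C^2\to 1^-$ the Euclidean-dual contact forms $\eta=\iota_{V_{ABC}}g$ converge smoothly, on a fixed compact neighbourhood of $D$ disjoint from the singular locus, to the singular form at $B^2+C^2=1$; since the parameters below threshold give \emph{nonsingular} fields, there are no singularities to track and the convergence is uniform there. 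Because the presence of an overtwisted disk is stable under $C^\infty$-small perturbations of the contact structure (equivalently, tightness is a closed condition), $D$ would persist as an overtwisted disk for nonsingular ABC fields with $B^2+C^2<1$, contradicting the tightness established in the previous paragraph. Hence $\xi_\epsilon$ is tight for all small $\epsilon$, and the structure at $B^2+C^2=1$ is tight.

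The main obstacle is precisely this boundary case: one must ensure the putative overtwisted disk can be kept at positive distance from the degenerating singularities and that overtwistedness really is preserved under the relevant perturbation, which is the only nonroutine point. As a consistency check — though not by itself a proof of global tightness — one may observe that, since the index is $0$, Lemma \ref{lemma:level_set_structure} produces a \emph{single} dividing curve on a small sphere about each $A_2$ singularity, so Giroux's Criterion forces a tight neighbourhood of each singular point, in agreement with the conclusion that the $A_2$ singularities do not create overtwistedness. Assembling the three regimes yields that the ABC field is tight if and only if $B^2+C^2\le 1$.
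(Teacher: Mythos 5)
Your proposal is correct and follows essentially the same route as the paper: tightness for $B^2+C^2<1$ via Etnyre--Ghrist, overtwistedness for $B^2+C^2>1$ via Theorem \ref{thm:singular_contact_structures_are_ot} applied to the nonzero-index Morse singularities, and tightness on the arc $B^2+C^2=1$ because these fields lie on the boundary of the family of tight nonsingular fields. The only difference is one of exposition: the paper compresses the boundary case into a single sentence, whereas you spell out the underlying stability argument (openness of overtwistedness under $C^\infty$-small perturbations on compact sets away from the singularities), which is exactly the fact the paper implicitly invokes.
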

\begin{proof}
  Every ABC field with parameter values $B^2+C^2 < 1$ is nonsingular, and tight by the result of Etnyre \& Ghrist~\cite{etnyre00}. For certain parameter values the ABC field will have only singularities of index 0, but these must lie on the boundary of the space of nonsingular fields, and hence make up the arc $B^2 + C^2 = 1$; these are also tight. The remainder of the space consists of ABC fields transverse to singular contact structures with at least two singularities of nonzero index, to which we may apply Theorem \ref{thm:singular_contact_structures_are_ot}.
 \end{proof}

\subsection{The Singular Lutz Twist} \label{sec:lutz_twist}
Let $\Xi$ denote the space of all contact structures (in the usual sense) on a 3-manifold $M$ (not necessarily closed), and let $\Xi_\text{sing}$ denote the space of singular contact structures. Let us enlarge $\Xi$ to also include the singular contact structures, producing the space $\mf{S} = \Xi \cup \Xi_\text{sing}$. By \textit{a homotopy through singular contact structures}, we mean a continuous path in the space $\mf{S}$. In this section we show that any two contact structures $\xi_0, \xi_1 \in \Xi$ are connected by a path in $\mf{S}$. This means that homotopies through singular contact structures can be used to change the usual homotopy invariants of a contact structure.

To achieve this, we define a singular version of a well-known type of surgery on contact manifolds called the Lutz twist~\cite{geiges08}. In a tubular neighbourhood of a transverse curve $K$ in a contact structure $\xi$, there are coordinates $(r, \theta, \phi)$ such that $\xi$ is defined by $\eta_\text{std} = d\phi + r^2d\theta$. A Lutz twist excises this neighbourhood and glues in the contact structure defined by the 1-form $\eta_\text{Lutz} = h_1(r)d\phi + h_2(r)d\theta$, where $h_1, h_2$ are functions satisfying
\begin{enumerate}
  \item $h_1(r) = -1$ and $h_2(r) = -r^2$ near $r=0$,
  \item $h_1(r) = 1$ and $h_2(r) = r^2$ near $r=1$,
  \item $(h_1, h_2)$ is never parallel to $(h_1^\prime, h_2^\prime)$ when $r \neq 0$.
\end{enumerate}
The topological properties of this 1-form are determined by the first two conditions, while the third ensures it is contact. The result of this surgery is to make the contact structure overtwisted and to change the Euler class by $-2PD[K]$. Since the Euler class can change it is clear that this surgery cannot be effected by a homotopy through plane fields, however we will see that it can be effected via a homotopy in the larger space $\mf{S}$.

Our construction is based on the unfolding of the $A_2$ singularity~\cite{arnold88} (also called the fold catastrophe), given by the function germ $\phi_s = x^2 - y^2 + z(z^2 + s)$. This defines a plane field $d\phi = 0$ which has no singularities when $s>0$ and has two chiral Morse singularities when $s<0$, so allowing for the birthing of a pair of chiral singularities from a previously nonsingular plane field. Define $f_s(\phi) = s - \frac{1}{2}\cos z$ and consider the family of 1-forms
\begin{equation} \label{eq:eta_st}
  \eta_s = f_sdz + xdx-ydy + t(f_s (xdy -ydx) + 2xydz),
\end{equation}
on the solid torus $D^2 \times S^1$, for $t \neq 0$ a constant and $s \in [-1, 1]$. Depending on the choice of $s$ this 1-form is singular, with singular points at $x=y=0$ and $\cos(z) = 2s$. As $s$ decreases from $s=1$ to $s=-1$, an $A_2$ singularity of index zero is born at $z = 0$ when $s=1/2$, then splits apart into singularities of Morse index 1 and Morse index 2 which move in opposite directions around the circle $x^2+y^2=0$ until they annihilate again at $z = \pi$ when $s=-1/2$. For $s < -1/2$, there are again no singularities. We compute that
\begin{equation}
  \eta_s \wedge d\eta_s = 2t \left(f_s^2 + \left(1-\frac{1}{4}\sin z\right)x^2 + \left(1+\frac{1}{4}\sin z \right)y^2 \right)dx\wedge dy\wedge dz,
\end{equation}
so that $\eta_s$ defines a (singular, depending on the value of $s$) contact structure on the solid torus for any constant $t \neq 0$. The contact structure is positive or negative depending on the sign of $t$; in keeping with convention, fix $t > 0$.

We wish to show that $\eta_s$ can be extended to a singular contact form on $D^2 \times S^1$ that agrees with $\eta_\text{std}$ close to the boundary. We will use a result about characterstic foliations.
\begin{proposition} \label{prop:char_foliation_homotopy}
  Let $S$ be a surface embedded in $M$. If $\xi_0, \xi_1$ are contact structures inducing the same characteristic foliation on $S$, then there is a neighbourhood of $S$ in $M$ such that $\xi_0$ and $\xi_1$ are homotopic when restricted to this neighbourhood.
\end{proposition}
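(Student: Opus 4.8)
The plan is to treat this as the classical fact that the characteristic foliation determines the contact germ near a surface up to homotopy (see~\cite{geiges08}), and to prove it by a two-stage normalisation followed by a Moser-type convexity argument. First I would choose contact forms $\alpha_0, \alpha_1$ with $\ker \alpha_i = \xi_i$, and let $\iota : S \hookrightarrow M$ denote the inclusion. Since $\xi_0$ and $\xi_1$ induce the same oriented characteristic foliation, the pullbacks $\iota^*\alpha_0$ and $\iota^*\alpha_1$ have the same kernel line field, the same singular set, and matching co-orientations, so $\iota^*\alpha_1 = g\,\iota^*\alpha_0$ for a positive function $g$ on $S$; after rescaling $\alpha_1$ by a positive extension of $g$ over a collar, the two pullbacks agree.

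The first substantive step is to match the contact \emph{planes} along $S$, not merely their traces on $TS$. At a singular point $p$ of the characteristic foliation one has $T_pS \subset \xi_i|_p$, forcing $\xi_0|_p = T_pS = \xi_1|_p$ automatically; away from the singular set the two planes share the characteristic line and differ only by a normal tilt. I would construct a diffeomorphism $\phi$ of a tubular neighbourhood $S \times [-1,1]$, with $\phi|_S = \mathrm{id}$ and isotopic to the identity through diffeomorphisms fixing $S$, whose normal derivative along $S$ shears the transverse direction so as to carry $\xi_1|_S$ onto $\xi_0|_S$ as plane fields along $S$. Because each stage $\phi_\tau$ is a diffeomorphism, $(\phi_\tau)_* \xi_1$ is again a contact structure, so this already supplies a homotopy through contact structures from $\xi_1$ to $\xi_1' := \phi_* \xi_1$, after which $\xi_0$ and $\xi_1'$ agree on all of $TM|_S$.

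For the second step I would pick contact forms $\alpha_0, \alpha_1'$ that coincide on $TM|_S$ (possible once the planes and co-orientations agree along $S$) and form the linear interpolation $\alpha_t = (1-t)\alpha_0 + t\alpha_1'$. At any $p \in S$ one has $\alpha_t(p) = \alpha_0(p) = \alpha_1'(p)$ and $(d\alpha_t)(p) = (1-t)(d\alpha_0)(p) + t(d\alpha_1')(p)$, so
\begin{equation}
  (\alpha_t \wedge d\alpha_t)(p) = (1-t)\,(\alpha_0 \wedge d\alpha_0)(p) + t\,(\alpha_1' \wedge d\alpha_1')(p) > 0,
\end{equation}
where the cross term $\alpha_0 \wedge d\alpha_1'$ has been rewritten as $\alpha_1' \wedge d\alpha_1'$ using $\alpha_0(p) = \alpha_1'(p)$. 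Thus $\alpha_t$ is contact at every point of $S$ for all $t \in [0,1]$; since $S$ is compact and the contact condition is open, it remains contact on a uniform neighbourhood $N$ of $S$ for all $t$, giving a homotopy through contact structures from $\xi_0$ to $\xi_1'$ on $N$. Concatenating with the homotopy of the previous step yields the desired homotopy from $\xi_0$ to $\xi_1$.

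I expect the main obstacle to be the plane-matching of the first step: verifying that the characteristic foliation data (a co-oriented line field with prescribed singularities) is exactly enough to build the shearing diffeomorphism fixing $S$, and that this can be carried out smoothly and compatibly across the singular points of the foliation, where the two planes already coincide with $T_pS$. Once the planes are aligned along $S$, the convexity computation of the second step is routine.
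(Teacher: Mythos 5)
Your overall strategy (normalise the pullbacks to $S$, then run a convexity argument on the linear interpolation of contact forms) is essentially the standard one -- which is also all the paper does, since it states this proposition without proof, deferring to Geiges~\cite{geiges08}. However, your proof as written has a genuine gap, and it sits exactly where you predicted: step 1, the matching of the planes along $TM|_S$. Write $\alpha_i = \beta + u_i\,dz$ along $S$ after your normalisation $\iota^*\alpha_0 = \iota^*\alpha_1 = \beta$. Any diffeomorphism $\phi$ fixing $S$ pointwise has $d\phi|_{TS} = \mathrm{id}$ and $d\phi(\partial_z) = v + c\,\partial_z$ with $v$ tangent to $S$ and $c > 0$, and the requirement $\phi_*\xi_1 = \xi_0$ along $S$ works out to the equation $\beta(v) + c\,u_0 = u_1$ on $S$. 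At a singular point $p$ of the characteristic foliation, $\beta_p = 0$ while $u_0(p)$ and $u_1(p)$ are both nonzero (of the same sign, by the contact condition, but in general unequal: normalising the pullbacks uses up all the rescaling freedom and does not force $u_0 = u_1$ at singular points). A pure shear, $c \equiv 1$, which is what you describe, would require $\beta(v) = u_1 - u_0$, which is impossible at such a point. A repair needs a transverse dilation with $c(p) = u_1(p)/u_0(p)$ at singular points, together with an argument that $\beta(v) = u_1 - c\,u_0$ is smoothly solvable for $v$ across the singular set, i.e.\ that the right-hand side lies in the ideal generated by the components of $\beta$; this can be arranged by taking $c \equiv u_1/u_0$ on a neighbourhood of the singular set, so the right-hand side vanishes identically there. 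So step 1 is fixable, but not by the construction you wrote down.

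The cleaner point, and the one the standard proof rests on, is that step 1 is unnecessary: your step 2 computation survives with only the hypothesis $\iota^*\alpha_0 = \iota^*\alpha_1$, i.e.\ agreement of the forms on $TS$ rather than on all of $TM|_S$. Setting $\gamma = \alpha_0 - \alpha_1$ and $\alpha_t = (1-t)\alpha_0 + t\alpha_1$, direct expansion gives the identity
\begin{equation}
  \alpha_t \wedge d\alpha_t \;=\; (1-t)\,\alpha_0 \wedge d\alpha_0 \;+\; t\,\alpha_1 \wedge d\alpha_1 \;-\; t(1-t)\,\gamma \wedge d\gamma ,
\end{equation}
so the only failure of convexity is the term $\gamma \wedge d\gamma$, and this vanishes at every point of $S$: the condition $\iota^*\gamma = 0$ means $\gamma|_p = u\,dz$ for $p \in S$ (it is proportional to the conormal), while $dz \wedge d\gamma|_p$ sees only the part of $d\gamma$ with no $dz$-leg, which is $\iota^* d\gamma = d(\iota^*\gamma) = 0$; hence $\gamma \wedge d\gamma|_S = 0$. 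Therefore $\alpha_t$ is contact at every point of $S$ for all $t \in [0,1]$, and the tube lemma (or compactness -- in the paper's application $S$ is a torus) yields a single neighbourhood of $S$ on which every $\alpha_t$ is contact. Your pointwise cross-term cancellation is the special case $\gamma|_S = 0$ of this computation, and it is precisely your insistence on that stronger normalisation that forced you into the problematic shearing step.
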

\noindent This implies that two contact structures inducing the same characteristic foliation on `opposite sides' of some surface can be glued together along that surface after some homotopy.

Take polar coordinates $r, \theta$ on $D^2$ and $z$ on $S^1$. We can view $D^2 \times S^1$ as a smaller copy of $D^2 \times S^1$ with a family of tori $T^2 \times [0,1]$ glued onto the outside. To construct our desired plane field, we will define a tight contact structure on $T^2 \times [0,1]$ such that the characteristic foliation induced on the surface $T^2 \times 0$ agrees with that induced on the boundary of the solid torus $D^2 \times S^1$ by $\eta_s$, and that is also homotopic to $\eta_\text{std}$ near $T^2 \times 1$. We can then glue using Proposition \ref{prop:char_foliation_homotopy} to produce a contact structure with the desired properties on $D^2 \times S^1$.

Consider a torus of radius $R$ inside $D^2 \times S^1$. Since the parameter $t$ is arbitrary, we can choose $t=1/R$. Then the characteristic foliation induced by $\eta_s$ on this torus is directed by the vector field $(f_s + \sin 2\theta)(\partial_\theta+\partial_z)$. Away from the line $0 \times S^1$ all the contact structures $\eta_s$ are homotopic, and therefore we may assume that on this torus $s$ is large enough so that $f_s + \sin 2\theta$ is never zero. Thus the characteristic foliation is directed by the vector field $\partial_\theta+\partial_z$. The characteristic foliation induced by $\eta_\text{std}$ on a torus of radius $R$ is $\partial_\theta - R^2 \partial_z$. Without loss of generality we can take $R=1$.

Now we define a tight contact structure on $T^2 \times [0,1]$ that interpolates between these characteristic foliations. There are many options, for example the contact structure defined by the 1-form $dz + \sqrt{2} \sin\left (\frac{\pi}{2}(r-\frac{1}{2}) \right)d\theta$, where $\theta, z$ denote the coordinates on $T^2$ and $r$ the coordinate on $[0,1]$. By glueing these pieces together we obtain a singular contact structure on $D^2 \times S^1$ that agrees with the standard neighbourhood of a transverse curve close to the boundary of the solid torus and has the properties of $\eta_s$ close to the core. Denote this singular contact structure by $\zeta_s$.

The contact structure $\zeta_1$ is homotopic through contact structures to the contact structure defined by $\eta_\text{std}$, which is tight and therefore satisfies the Bennequin inequalities, while the properties of $\zeta_{-1}$ are the same as those of the contact structure defined by $\eta_\text{Lutz}$. If $\xi^\prime$ is the plane field obtained from a contact structure $\xi$ by removing a neighbourhood of a transverse knot and glueing in $\zeta_{-1}$, then obviously $\xi$ and $\xi^\prime$ are plane fields which are homotopic in $\mf{S}$.
\begin{lemma} \label{lemma:change_in_e}
  If $\xi$ is a contact structure and $\xi^\prime$ is the plane field obtained by replacing a neighbourhood of some transverse curve $K$ in $\xi$ by $\zeta_{-1}$, then $e(\xi^\prime) = e(\xi) -2 PD[K]$ and $\xi^\prime$ is overtwisted.
\end{lemma}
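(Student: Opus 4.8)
The plan is to show that $\zeta_{-1}$ reproduces the standard Lutz twist, so that the known effect of the latter can be transferred directly. The first observation is that $\zeta_{-1}$ is a genuine (nonsingular) contact structure: the singularities of $\eta_s$ occur only when $\cos z = 2s$, which has no solution for $s=-1$, so the homotopy $\zeta_s$ begins and ends at honest contact structures and passes through singular ones only for $|s|\le 1/2$. Thus $\xi^\prime$ is a bona fide contact structure on $M$, and both assertions are statements about an ordinary contact structure.

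Next I would establish that $\zeta_{-1}$ agrees with the standard Lutz-twisted contact form $\eta_\text{Lutz}$ up to homotopy rel boundary, under the identification of the core coordinate $z$ with $\phi$. Both plane fields coincide with $\eta_\text{std}$ near $\partial(D^2 \times S^1)$ by construction. The essential feature of the Lutz twist is that the contact planes reverse their co-orientation exactly once as one passes from the boundary to the core: for $\eta_\text{Lutz}$ the coefficient $h_1$ of $d\phi$ runs from $+1$ at $r=1$ to $-1$ at $r=0$. The same single reversal occurs for $\zeta_{-1}$, since near the core the form is governed by $\eta_{-1}$, whose coefficient of $dz$ is $f_{-1}(z) = -1 - \tfrac12\cos z < 0$, opposite in sign to the coefficient of $dz$ in $\eta_\text{std}$ near the boundary. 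The interpolating tight contact structure on $T^2 \times [0,1]$ was chosen precisely so that its $d\theta$-coefficient changes sign exactly once and contributes no further rotation of the planes. Matching the characteristic foliations on the intermediate tori and invoking Proposition \ref{prop:char_foliation_homotopy}, I would conclude that $\zeta_{-1}$ is homotopic through contact structures, rel boundary, to the result of the standard Lutz twist along $K$.

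Granting this equivalence, both claims follow. The Euler class is an invariant of the homotopy class of the underlying plane field, and the modification is supported in the solid torus neighbourhood of $K$; since the standard Lutz twist changes the Euler class by $-2\,PD[K]$, homotopy invariance yields $e(\xi^\prime) = e(\xi) - 2\,PD[K]$. Overtwistedness is preserved under homotopy through contact structures, so because the standard Lutz twist produces an overtwisted structure, $\xi^\prime$ is overtwisted as well. Alternatively one can exhibit the overtwisted disk directly: on a meridian disk the restriction of $\zeta_{-1}$ has a $d\theta$-coefficient that changes sign with $r$, producing a closed Legendrian leaf in the characteristic foliation that bounds an overtwisted disk.

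I expect the main obstacle to be the second step, namely verifying that $\zeta_{-1}$ performs exactly one half-twist and no more. The tight contact structures interpolating between two fixed boundary characteristic foliations on $T^2 \times [0,1]$ are not unique -- they differ by how many times the planes rotate -- so one must check that the specific interpolating structure used in building $\zeta_s$ introduces no spurious full rotation, so that the total twisting agrees with that of $\eta_\text{Lutz}$ rather than differing by a multiple of a full twist. This is precisely what pins down the coefficient $-2$, rather than some other even integer, in the Euler class formula.
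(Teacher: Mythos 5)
Your opening observation is correct (and implicitly used in the paper): $\zeta_{\pm 1}$ are honest nonsingular contact structures, the homotopy being singular only for $|s|\le 1/2$. The problem is that everything else in your argument is routed through the claim that $\zeta_{-1}$ is homotopic rel boundary --- indeed homotopic through contact structures rel boundary --- to the standard Lutz tube $\eta_\text{Lutz}$, and this claim is never established; you yourself flag it as ``the main obstacle''. This is a genuine gap, and not a peripheral one: deciding that the planes of $\zeta_{-1}$ perform exactly one half-twist, rather than a half-twist plus some number of full twists, \emph{is} the computation of the relative Euler class of $\zeta_{-1}$ on the solid torus, i.e.\ it is exactly the ``$-2$'' that the lemma asserts, so the reduction begs the question. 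Moreover, the tool you propose for closing the gap cannot do so: Proposition \ref{prop:char_foliation_homotopy} only produces a homotopy in a neighbourhood of a \emph{single} surface, not a homotopy rel boundary of the whole solid torus; and the characteristic foliations of $\zeta_{-1}$ on the tori near the core are induced by $\eta_{-1}$, which is not rotationally symmetric (its restriction to a torus of radius $r$ is $(f_{-1}+tr^2\sin 2\theta)\,dz + r^2(tf_{-1}-\sin 2\theta)\,d\theta$), so they are not the linear foliations induced by any $\eta_\text{Lutz}$ and there is nothing to match torus-by-torus. For the overtwistedness claim there is an added circularity: a rel-boundary homotopy through contact structures between two given overtwisted structures would normally be extracted from Eliashberg's classification \cite{eliashberg89}, which requires knowing in advance that $\zeta_{-1}$ is overtwisted and lies in the expected plane-field class --- the two facts being proved.

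The paper avoids any comparison with $\eta_\text{Lutz}$ and argues directly, mimicking the standard Lutz-twist argument of \cite{geiges08}: (i) as $s$ runs from $1$ to $-1$, the pair of singularities that is born traces out a family of overtwisted disks along the segment of $K$ between them, and these disks persist after the singularities annihilate, so $\zeta_{-1}$, hence $\xi'$, is overtwisted; (ii) a vector field tangent to $\xi$ outside the tube extends to a vector field tangent to $\xi'$ with one additional zero circle, of winding $-1$, isotopic to $K$, and reading the Euler class off the zero set of this section gives $e(\xi') = e(\xi) - 2PD[K]$. This winding computation is precisely the quantitative step your proposal defers. Your alternative, direct overtwisted-disk argument is closer in spirit to a complete proof than the homotopy-transfer route, but as stated it misplaces the phenomenon: near the core the $d\theta$-coefficient of $\eta_{-1}$ is $r^2(tf_{-1}-\sin 2\theta)$, which for the relevant choices of $t$ does not change sign; the circle of tangency you want actually lives in the interpolating region $T^2\times[0,1]$ used in the gluing, and since that region is independent of $s$, an argument based on it cannot by itself capture what distinguishes $\zeta_{-1}$ from $\zeta_{1}$ --- whereas the paper's mechanism, disks swept out by the moving singularities, is localised near the core and tied to the twist itself.
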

\begin{proof}
  The argument is the same as for the Lutz twist, see~\cite[\S4.3]{geiges08}. We move from $s=1$ to $s=-1$ in $\zeta_s$, a pair of singularities is born which trace out (along the section of $K$ in between them) a set of overtwisted disks. Once the singularities have been annihilated, these disks persist. A vector field tangent to $\xi$ outside of the neighbourhood of $K$ extends to a vector field tangent to $\xi^\prime$ with an additional zero of winding $-1$ along a curve isotopic to $K$. Since the zero set of such a vector field generically determines the Euler class, we have $e(\xi^\prime) = e(\xi) -2 PD[K]$.
 \end{proof}

\begin{figure}[tb]
\centering
\includegraphics[width=1.0\linewidth]{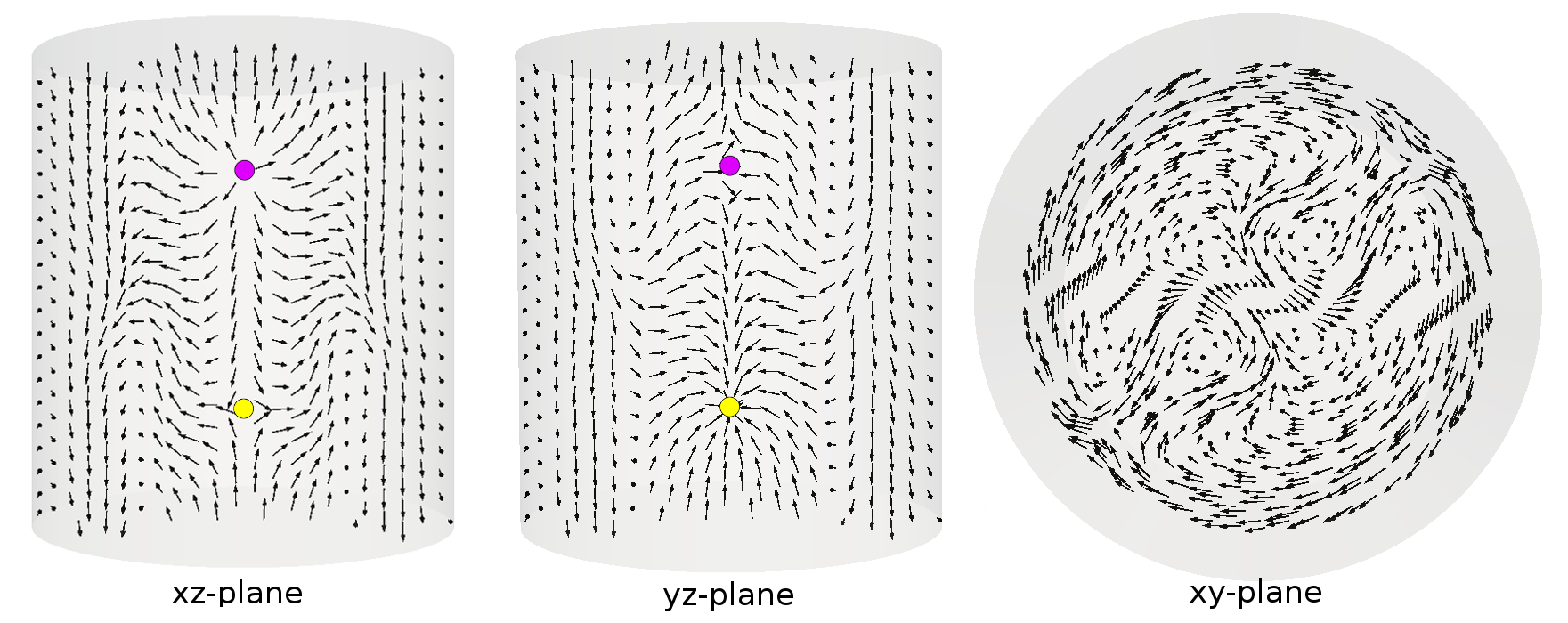}
\caption{The Reeb field of the contact form (\ref{eq:eta_st}) with $s=0$ as a vector field on $D^2 \times S^1$ with coordinates $x,y$ on the disk and $z$ on $S^1$, shown on various slices. The top and bottom of the cylinders shown are identified. The singularities are indicated by yellow (Morse index 2) and magenta (Morse index 1) disks. The Reeb field matches onto a boundary condition defined by the Reeb field of the model of a standard neighbourhood of a transverse curve in a contact structure.}
\label{fig:lutz}
\end{figure}

\begin{theorem} \label{thm:no_invariants_singular_plane_fields}
  Any pair of contact structures are connected by a path in $\mf{S}$.
\end{theorem}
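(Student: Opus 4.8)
The plan is to reduce the statement to Eliashberg's $h$-principle for overtwisted contact structures, using the singular Lutz twist as the mechanism that lets us move freely between homotopy classes of plane fields. Recall that two positive overtwisted contact structures are homotopic through contact structures, hence joined by a path in $\mf{S}$, precisely when they are homotopic as plane fields \cite{eliashberg89}. It therefore suffices to connect $\xi_0$ and $\xi_1$, by paths in $\mf{S}$, to overtwisted contact structures $\hat\xi_0,\hat\xi_1$ that lie in the same homotopy class of plane fields.

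First I would make both structures overtwisted. Applying Lemma \ref{lemma:change_in_e} to a transverse knot in $\xi_0$ produces, via the family $\zeta_s$, a path in $\mf{S}$ from $\xi_0$ to an overtwisted (genuine) contact structure $\hat\xi_0$; the singularities born at $s=\tfrac12$ and annihilated at $s=-\tfrac12$ supply exactly the mechanism by which the Euler class is allowed to jump, something impossible through plane fields alone. The same construction applied to $\xi_1$ gives an overtwisted $\hat\xi_1$.

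The substance of the argument is to arrange $[\hat\xi_0]=[\hat\xi_1]$ as homotopy classes of plane fields, using only singular Lutz twists, each realised as an $\mf{S}$-path. On a closed $3$-manifold this class is governed by two invariants: the Euler class $e\in H^2(M;\mbb{Z})$ and, at fixed $e$, the finer $d_3$ (Hopf) invariant. By Lemma \ref{lemma:change_in_e} a half singular Lutz twist along $K$ changes $e$ by $-2\,PD[K]$; since the Euler class of any plane field on an orientable $3$-manifold reduces to $w_2(M)=0$ mod $2$, the difference $e(\xi_0)-e(\xi_1)$ lies in $2H^2(M;\mbb{Z})$ and so can be cancelled by choosing the twisting knots appropriately, yielding $e(\hat\xi_0)=e(\hat\xi_1)$. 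To equalise the $d_3$ invariants at fixed Euler class I would then perform full singular Lutz twists, the $2\pi$ analogue of the $\zeta_s$ construction in which the functions $(h_1,h_2)$ wind once more, which, exactly as in the classical case \cite[\S4.3]{geiges08}, alters $d_3$ while preserving $e$. That half and full twists together reach any prescribed class is the combinatorial content of the Lutz--Martinet construction \cite{geiges08}; the new point is only that each elementary move is now a continuous path in $\mf{S}$.

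The main obstacle I anticipate is twofold: realising the full ($2\pi$) twist explicitly as an $\mf{S}$-path by a birth--death family of singularities analogous to $\zeta_s$, and then bookkeeping the coupled effect of the twists on the pair $(e,d_3)$, since a half twist moves both invariants at once and one must check the Euler-class adjustments and subsequent $d_3$ corrections can be performed compatibly (interpreting $d_3$ modulo the appropriate divisibility when $e$ is non-torsion). Once $[\hat\xi_0]=[\hat\xi_1]$ is achieved, Eliashberg's $h$-principle connects $\hat\xi_0$ and $\hat\xi_1$ within $\Xi\subset\mf{S}$, and concatenating with the two $\mf{S}$-paths built from the singular Lutz twists gives the desired path from $\xi_0$ to $\xi_1$. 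For $M$ with boundary or noncompact the same scheme applies, replacing the closed classification of plane fields by the relative Pontryagin--Thom description of Lemma \ref{lemma:pontryagin-thom} together with the corresponding relative form of Eliashberg's theorem.
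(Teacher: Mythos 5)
Your overall strategy is exactly the paper's: use the singular Lutz twist (the family $\zeta_s$, each application being a path in $\mf{S}$) to make both structures overtwisted and to move between homotopy classes of plane fields, then invoke Eliashberg's $h$-principle to connect the two overtwisted structures inside $\Xi \subset \mf{S}$. The paper compresses all of the invariant bookkeeping you describe into a single appeal to ``the same procedures that are used in the proof of the Lutz--Martinet Theorem'' in \cite{geiges08}, so in outline the two arguments coincide.

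However, one step of your plan is wrong as stated and would fail if executed literally: a full ($2\pi$) Lutz twist does \emph{not} alter the $d_3$ invariant. In the classical theory the full Lutz twist leaves the homotopy class of the contact structure as a plane field entirely unchanged --- this is precisely why it is the standard tool for producing an overtwisted structure in a \emph{fixed} homotopy class (see \cite[\S 4.3]{geiges08}). The mechanism in the Lutz--Martinet construction for adjusting $d_3$ at fixed Euler class is different: one performs \emph{half} Lutz twists along nullhomologous transverse knots, for which $PD[K]=0$ so that $e$ is preserved, with the change in the Hopf datum controlled by the choice of knot (e.g.\ its self-linking number, working in a Darboux ball). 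Correcting this actually simplifies your argument: the obstacle you anticipate --- constructing an explicit singular realisation of the $2\pi$ twist as a birth--death family --- never arises, because every elementary move you need, both the Euler-class adjustments and the $d_3$ adjustments, is a half twist along some transverse knot, and the paper's family $\zeta_s$ together with Lemma \ref{lemma:change_in_e} already realises each such move as a continuous path in $\mf{S}$.
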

\begin{proof}
  Let $\xi_0, \xi_1$ be two contact structures, which we can assume without loss of generality are overtwisted, since if either is tight, we can make them overtwisted using the singular Lutz twist. There is an overtwisted contact structure in every homotopy class of plane fields and, by Eliashberg~\cite{eliashberg89}, overtwisted contact structre are homotopic through contact structure if and only if they belong to the same homotopy class of plane fields. If $\xi_0$ does not belong to the same homotopy class as $\xi_1$, then we use singular Lutz twists to change its homotopy class using the same procedures that are used in the proof of the Lutz--Martinet Theorem, see~\cite{geiges08}. Any contact structure obtained from $\xi_0$ by applying singular Lutz twists is evidently homotopic to $\xi_0$ in the space $\mf{S}$.
 \end{proof}
\noindent Physically, the singular Lutz twist models the formation or destruction of tubes of overtwisted disks and the change in the homotopy invariants that occurs in a cholesteric liquid crystal material, where the evolution of the director traces out a continuous path in the space $\mf{S}$. Since the singularities we have used in this construction are Beltrami singularities, a similar statement can be made for the evolution of a time-dependent Beltrami field, and the formation of overtwisted disks and stagnation points that can occur.

\subsection{The Singular Weinstein Conjecture}
Notice that the Reeb field of $\eta_{-1}$ has a closed periodic orbit that is isotopic to the line $r=0$. From this we can deduce that periodic orbits can also be inserted into (or removed from) the Reeb field via a homotopy in $\mf{S}$.
\begin{proposition} \label{prop:adding_periodic_orbit}
  Any contact form $\eta_0$ can be homotoped through singular contact forms $\eta_t$ via a homotopy localised in a neighbourhood of some transverse curve $K$ so that the Reeb field of $\eta_1$ has a periodic orbit that is isotopic to $K$.
\end{proposition}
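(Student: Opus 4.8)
The plan is to realise the required homotopy as an instance of the singular Lutz twist of \S\ref{sec:lutz_twist}, stopped at the parameter value at which a closed Reeb orbit first appears along the core of the surgery torus. Since $K$ is transverse to $\ker\eta_0$, there is a tubular neighbourhood with coordinates $(r,\theta,\phi)$ in which $\eta_0$ coincides with $\eta_\text{std} = d\phi + r^2 d\theta$; outside this neighbourhood the homotopy $\eta_t$ will be constant and equal to $\eta_0$, so that the construction is localised near $K$ as demanded.

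First I would invoke the family $\zeta_s$ built in \S\ref{sec:lutz_twist} from the forms $\eta_s$ of Eq. (\ref{eq:eta_st}), glued across the interpolating region $T^2 \times [0,1]$ so as to agree with $\eta_\text{std}$ near the boundary of the solid torus. Using Proposition \ref{prop:char_foliation_homotopy} I would homotope $\eta_0$ near $K$ to agree with $\zeta_1$, which is homotopic to the standard neighbourhood, and then follow the family $\zeta_s$ as $s$ decreases from $1$ to $-1$. For $s \in (-\tfrac12,\tfrac12)$ the condition $\cos z = 2s$ has solutions, so the path passes through the genuinely singular forms carrying the pair of chiral Morse singularities that are born and annihilated during the twist; this makes it a path in $\mf{S}$ rather than in $\Xi$. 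At the endpoint $s=-1$ the equation $\cos z = 2s$ has no solution, so $\zeta_{-1}$ is nonsingular, and I set $\eta_1 = \zeta_{-1}$.

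It then remains to produce the periodic Reeb orbit of $\eta_1$. Restricting $\eta_s$ to the core $x=y=0$ gives $\eta_s = f_s\,dz$, which for $s=-1$ is nonvanishing since $f_{-1} = -1 - \tfrac12\cos z$ lies in $[-\tfrac32,-\tfrac12]$, while a direct computation gives $d\eta_s|_{x=y=0} = 2t f_s\, dx\wedge dy$. The kernel of this $2$-form is spanned by $\partial_z$, so the field $R = f_s^{-1}\partial_z$ satisfies $\iota_R \eta_s = 1$ and $\iota_R d\eta_s = 0$ along the core and is therefore the Reeb field there. As $R$ is tangent to the circle $r=0$ at every point of that circle, the core is invariant under the Reeb flow and hence a closed orbit; because $\zeta_{-1}$ equals $\eta_{-1}$ near the core, the Reeb field of $\eta_1$ has this orbit, which is the core of the surgery torus and so isotopic to $K$.

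The substance of the argument is the verification in the last paragraph that the core is a genuine invariant circle of the Reeb flow rather than merely tangent to it at isolated points; this reduces to the observation that, along $x=y=0$, both $\eta_s$ and $d\eta_s$ are governed entirely by $f_s$ and the transverse area form $dx \wedge dy$, forcing the Reeb direction to be exactly $\partial_z$. The only other point needing care is that the periodic orbit survives the gluing used to match $\zeta_s$ to $\eta_\text{std}$; this is immediate, as that interpolation is supported in an annular region bounded away from $r=0$.
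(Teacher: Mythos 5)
Your proposal is correct and follows essentially the same route as the paper: the paper gives no formal proof, deriving the proposition directly from the singular Lutz twist construction of \S\ref{sec:lutz_twist} and the observation that the Reeb field of $\zeta_{-1}$ has a closed orbit isotopic to the core $r=0$. Your explicit computation that $\eta_s|_{x=y=0} = f_s\,dz$ and $d\eta_s|_{x=y=0} = 2tf_s\,dx\wedge dy$ force the Reeb direction along the core to be $\partial_z$ supplies the verification the paper leaves implicit (note only that singularities exist on the closed interval $s\in[-\tfrac12,\tfrac12]$, including the birth/death values $s=\pm\tfrac12$, a harmless correction to your open-interval statement).
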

\noindent Note that the periodic orbits produced in this fashion have a saddle-like local structure and are therefore unstable.

The Weinstein conjecture (now proved in dimension 3~\cite{taubes07}) states that the Reeb field of any contact form on a closed manifold contains a periodic orbit. This conjecture is certainly not true for the Reeb-like fields of singular contact structures. Counterexamples are given in~\cite{miranda18} via a plug construction, which modifies the singular plane field in a neighbourhood of a point. We can use the singular Lutz twist to construct similar families of counterexamples. Take any contact form whose Reeb field has finitely many closed orbits. These orbits are closed transversal curves, so we use the singular Lutz twist to replace a neighbourhood of any closed orbit with the plane field $\zeta_0$, destroying the periodic orbit and replacing it with a pair of orbits that limit on the new singularities, that is, a homoclinic orbit. Repeating this process for each periodic orbit gives a singular contact structure whose Reeb-like fields have no periodic orbits.

The natural variant of the Weinstein conjecture for singular contact forms is as follows: every Reeb-like field of a singular contact form contains either a closed orbit, or an orbit that approaches the singular set in both forward and backward time~\cite{miranda18}. We refer to this as the singular Weinstein conjecture. Obviously any counterexamples to the Weinstein conjecture we produce using the singular Lutz twist satisfy the singular version of the conjecture, and the conjecture would be true if one could show that every singular contact structure could be produced from a contact structure by inserting and then unfolding index 0 singularities.

We conclude by giving two cases where the singular Weinstein conjecture holds.
\begin{proposition}
  The singular Weinstein conjecture holds for ABC fields.
\end{proposition}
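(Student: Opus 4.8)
The plan is to verify the singular Weinstein conjecture for the Arnold--Beltrami--Childress fields by exploiting the explicit structure established earlier in the paper for their singularities, together with the dynamical behaviour of the field on the nonsingular locus. Recall that when $B^2 + C^2 > 1$ the ABC field has singularities of Morse index $1$ and $2$, with local structure precisely that given in Example \ref{ex:morse_zero}, and when $B^2 + C^2 = 1$ the singularities are of type $A_2$ (index $0$); in all the nonsingular parameter regimes the field is transverse to a tight contact structure and the ordinary Weinstein conjecture applies. So the content of the claim is really about the singular parameter values, and I would split the argument according to which case we are in.

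First I would dispose of the nonsingular case $B^2 + C^2 < 1$: here the ABC field is a genuine nonsingular Beltrami field, hence (by Theorem \ref{thm:etnyre-ghrist}) the Reeb field of an honest contact form, and the Weinstein conjecture in its classical form \cite{taubes07} guarantees a genuine closed orbit, which a fortiori satisfies the singular version. The substantive work is in the singular regimes $B^2 + C^2 \ge 1$. For these I would appeal to the \emph{local} structure of the singularities. At a Morse singularity of index $1$ or $2$ the Reeb-like field near the singular point is, up to the perturbation of Example \ref{ex:morse_zero}, governed by $\nabla\phi$ with $\phi = x^2/2 + y^2/2 - z^2$ (or its negative). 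This is a hyperbolic-type critical point: it has both stable and unstable directions, so the local flow of the Reeb-like field possesses trajectories that are asymptotic to the singular point in forward time and others asymptotic in backward time. The heart of the matter is to produce \emph{a single} orbit that limits on the singular set in \emph{both} forward and backward time.

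The main obstacle, and the step I expect to require the most care, is precisely this global matching: showing that some unstable trajectory emanating from a singularity is captured by a singularity in forward time (possibly the same one, yielding a homoclinic orbit, or a different one, yielding a heteroclinic chain). The natural tool is the level-set structure of Lemma \ref{lemma:level_set_structure} together with a trapping or invariant-region argument. Because the ABC field is divergence-free and lives on the compact manifold $\mbb{T}^3$, I would argue that the unstable manifold of a given singularity cannot escape to infinity and, by compactness, its $\omega$-limit set is a nonempty compact invariant set; the goal is then to show this limit set meets the singular set rather than being a nontrivial recurrent set. One clean way to force this is to observe that when $B^2 + C^2 = 1$ the singularities are of index $0$ and arise exactly on the boundary of the nonsingular region, so a perturbative/continuity argument from the nonsingular closed orbits should furnish either a surviving closed orbit or an orbit degenerating onto the singular set as we cross $B^2 + C^2 = 1$. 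For $B^2 + C^2 > 1$, the presence of paired index-$1$ and index-$2$ singularities (as in the $\eta_s$, $\zeta_s$ construction of \S\ref{sec:lutz_twist}, where a Morse index $1$ and a Morse index $2$ singularity are joined by a connecting orbit) suggests that the generic picture is a heteroclinic connection between the two, and I would aim to exhibit such a connection directly.

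In summary, the proof divides into the nonsingular case (immediate from Taubes), the boundary case $B^2+C^2 = 1$ (continuity from nonsingular closed orbits), and the interior singular case $B^2 + C^2 > 1$ (the genuinely new work), where I would combine the explicit hyperbolic local model from Example \ref{ex:morse_zero} with a compactness/invariant-region argument on $\mbb{T}^3$ to produce an orbit connecting the singular set to itself in both time directions, exactly as the singular Weinstein conjecture demands.
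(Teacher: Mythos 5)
Your reduction of the problem — handling $B^2+C^2<1$ via Theorem \ref{thm:etnyre-ghrist} and Taubes' resolution of the Weinstein conjecture, and isolating the singular regime $B^2+C^2\ge 1$ as the real content — agrees with the paper. The gap is in the step you yourself flag as the heart of the matter: producing a single orbit that limits on the singular set in both time directions. Your proposed tool — compactness of $\mbb{T}^3$ plus volume preservation, so that the $\omega$-limit set of an unstable-manifold trajectory is a nonempty compact invariant set — cannot close this gap. In dimension three there is no Poincar\'e--Bendixson theorem, and nothing forces that limit set to contain a singular point or a closed orbit: it can be an invariant torus or a chaotic invariant set, and ABC flows are the canonical example of Lagrangian chaos, so this failure mode is not hypothetical but is in fact the typical behaviour of these very fields. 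The same problem undermines your continuity argument at $B^2+C^2=1$: closed orbits of the nearby nonsingular fields need not have bounded periods as the parameter approaches the boundary, so they need not converge to a closed orbit of the limiting field, nor to an orbit degenerating onto the singular set.

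The paper closes this gap by a different and essentially local mechanism, which you mention in passing but do not use. One moves along a path in the connected parameter space from the given singular ABC field towards the nonsingular region; the singularities can only disappear by annihilating pairwise through unfoldings of index-$0$ ($A_2$) singularities as the path crosses $B^2+C^2=1$. Two singularities that annihilate in such a fold unfolding are necessarily joined by a connecting orbit — the orbit that collapses at the moment of merger — so for every parameter value with $B^2+C^2\ge 1$ there are either periodic orbits or heteroclinic orbits connecting the singular points, which is exactly what the singular Weinstein conjecture demands. Your remark about paired Morse index-$1$ and index-$2$ singularities joined by a connecting orbit, as in the $\zeta_s$ construction of \S\ref{sec:lutz_twist}, is precisely this mechanism; to repair the proof you should promote it from a plausibility remark to the main argument, deriving the connecting orbits from the unfolding structure in parameter space rather than from a compactness or limit-set argument on the flow, which cannot succeed for a three-dimensional volume-preserving field.
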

\begin{proof}
  The space of ABC fields is described by two parameters $B, C \in [0,1]$. Since the parameter space is connected we can find a path connecting every singular ABC field to a nonsingular one. As we move along this path the singularities must annihilate with one another, eventually disappearing via unfoldings of index 0 singularities as they cross over the boundary $B^2+C^2=1$. This implies that there are orbits connecting the singularities which are removed when the singularities are brought together in this unfolding, and hence there are always either periodic orbits or trajectories connecting singularities for $B^2+C^2 \ge 1$. The resolution of the Weinstein conjecture in the contact case shows that periodic orbits exist whenever $B^2 + C^2 < 1$.
 \end{proof}
\begin{proposition}
  Let $M$ be a closed 3-manifold with $H_1(M) = 0$ and $\phi$ an intrinsically harmonic function. Then a singular contact form $\eta$ obtained by a small linear perturbation of $d\phi$ as in Proposition \ref{prop:global_perturbation} does not satisfy the Weinstein conjecture, but does satisfy the singular Weinstein conjecture.
\end{proposition}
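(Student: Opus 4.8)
The plan is to show that every Reeb-like field of $\eta$ is, up to reparametrisation, the gradient flow of $\phi$, so that its orbits inherit both the strict monotonicity of $\phi$ and the standard asymptotics of a gradient flow on a compact manifold.

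First I would identify the Reeb-like fields explicitly. By Proposition \ref{prop:global_perturbation} and the construction behind it, on $M - \Sigma$ the metric $g$ is genuine, $d\phi$ is $g$-harmonic, and the perturbing form satisfies $\star_g d\phi = d\nu$, so that $\eta = d\phi + t\nu$ has $d\eta = t\,d\nu = t\,\star_g d\phi$. Writing $X = \nabla_g \phi$ for the $g$-gradient of $\phi$, so that $\iota_X g = d\phi$, the identity $\star_g(\iota_X g) = \iota_X \mu$ (with $\mu$ the volume form) gives $d\eta = t\,\iota_X \mu$. A Reeb-like field $R$ obeys $\iota_R d\eta = 0$, i.e. $\iota_R \iota_X \mu = 0$; since $\iota_X \mu$ is a nonvanishing $2$-form on $M - \Sigma$ with one-dimensional kernel spanned by $X$, this forces $R$ to be everywhere parallel to $X = \nabla_g \phi$. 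As $M - \Sigma$ is connected, any such $R$ equals $\nabla_g\phi$ rescaled by a nowhere-zero function of constant sign, so the orbits of \emph{every} Reeb-like field coincide, as unparametrised curves, with the gradient trajectories of $\phi$. In particular both conclusions of the proposition will hold for all Reeb-like representatives at once.

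Next I would rule out closed orbits. Along any nonconstant orbit $\gamma$ one has $\tfrac{d}{dt}\phi(\gamma) = g(\nabla_g\phi,\dot\gamma)$ of constant sign and nonzero, so $\phi$ is strictly monotone along $\gamma$. A periodic orbit would return $\phi$ to its starting value, a contradiction; hence there are no closed orbits and the ordinary Weinstein conjecture fails for the Reeb-like fields of $\eta$. For the singular version, I would use compactness of $M$: then $\phi$ is bounded, so $\phi(\gamma(t))$ converges as $t\to\pm\infty$, and the $\omega$- and $\alpha$-limit sets of $\gamma$ are nonempty, connected, flow-invariant, and carry a constant value of $\phi$, forcing $\nabla_g\phi = 0$ on them. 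Thus both limit sets lie in the critical set $\Sigma$ of $\phi$, which is exactly the singular set; as $\Sigma$ is discrete, each limit set is a single point of $\Sigma$. Consequently every orbit approaches the singular set in both forward and backward time, which is precisely the singular Weinstein conjecture.

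The hard part will be the behaviour of the flow near $\Sigma$: one must check that the limit-set argument survives for the possibly degenerate, non-Morse singularities of $\phi$, and for a metric $g$ whose star operator is only required to extend, and may genuinely degenerate, over $\Sigma$. Choosing the particular representative $R = \nabla_g\phi$ keeps $R$ smooth with zeros exactly on $\Sigma$, so orbits approach $\Sigma$ asymptotically rather than in finite time; convergence of each trajectory to a single critical point then follows either from the {\L}ojasiewicz gradient inequality, available because the finitely determined $\phi$ is real analytic, or more elementarily from the connectedness of the limit set combined with the discreteness of $\Sigma$.
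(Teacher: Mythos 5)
Your proof is correct, and it follows the same overall strategy as the paper (compare Reeb-like fields to the gradient flow of $\phi$ and exploit monotonicity of $\phi$), but your key step is genuinely different and sharper. The paper's own proof is a two-sentence perturbation argument: Reeb-like fields of $\eta$ are \emph{close to} $\nabla\phi$ for small $t$, hence, like $\nabla\phi$, have no periodic orbits, and they likewise inherit the orbits connecting singularities. As stated, this transfer of properties under mere closeness is not automatic (proximity to a field without periodic orbits does not by itself exclude them). You replace the approximation by an exact identity: since $d\eta = t\,d\nu = t\star_g d\phi = t\,\iota_{\nabla_g\phi}\mu$, the kernel of $d\eta$ on $M-\Sigma$ is precisely the line spanned by $\nabla_g\phi$, so every Reeb-like field is pointwise \emph{parallel} to the gradient, not merely near it. This makes $\phi$ a strict Lyapunov function for every Reeb-like flow simultaneously: absence of closed orbits is immediate, and the standard limit-set argument (nonempty, connected, invariant, $\phi$ constant on it) shows every orbit limits to $\Sigma$ in both time directions -- strictly stronger than the single orbit the singular Weinstein conjecture requires. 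Your closing caveats can also be dispensed with cheaply: the hypothesis that $\phi$ is intrinsically harmonic means $g$ may be taken to be a genuine Riemannian metric on all of $M$, so $R=\nabla_g\phi$ is smooth with zeros exactly on $\Sigma$, and connectedness of the limit set together with finiteness of $\Sigma$ already gives convergence to a single critical point; the {\L}ojasiewicz inequality is not needed, and indeed even the weaker conclusion that orbits approach $\Sigma$ (all the conjecture asks for) follows directly from the limit set being contained in $\Sigma$.
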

\begin{proof}
  A gradient vector field $\nabla \phi$ does not contain periodic orbits, and since Reeb-like fields of $\eta$ are close to $\nabla \phi$ they will not do so either. However, $\nabla \phi$ does have orbits connecting the singularities, and therefore so do the Reeb-like fields of $\eta$.
 \end{proof}
A similar statement can be made if the singular contact form is produced by a perturbation of a closed 1-form using the method of Altschuler~\cite{altschuler95}, as discussed in \S\ref{sec:homotopy_invariants}.

\end{document}